\documentclass[review,onefignum,onetabnum]{siamart171218}



\usepackage{lipsum}
\usepackage{mathrsfs}
\usepackage{amsfonts}
\usepackage{graphicx}
\usepackage{epstopdf}
\usepackage{algorithmic}
\usepackage{amssymb}
\usepackage{amsmath}

\ifpdf
  \DeclareGraphicsExtensions{.eps,.pdf,.png,.jpg}
\else
  \DeclareGraphicsExtensions{.eps}
\fi


\newsiamremark{hypothesis}{Hypothesis}
\crefname{hypothesis}{Hypothesis}{Hypotheses}
\newsiamthm{claim}{Claim}
\newtheorem{remark}{Remark}
\nolinenumbers

\headers{Nield-Kuznetsov functions}{T. M. Dunster}


\title{Nield-Kuznetsov functions and Laplace transforms of parabolic cylinder functions}



\author{T. M. Dunster\thanks{Department of Mathematics and Statistics, San Diego State University, 5500 Campanile Drive, San Diego, CA 92182-7720, USA. 
  (\email{mdunster@sdsu.edu}, \url{https://tmdunster.sdsu.edu}).}}

\usepackage{amsopn}

\makeatletter
\newcommand*{\addFileDependency}[1]{
  \typeout{(#1)}
  \@addtofilelist{#1}
  \IfFileExists{#1}{}{\typeout{No file #1.}}
}
\makeatother


\nolinenumbers

\ifpdf
\hypersetup{
  pdftitle={Nield-Kuznetsov functions and Laplace transforms of parabolic cylinder functions},
  pdfauthor={T. M. Dunster}
}

\begin{document}

\maketitle

\begin{abstract}
  Nield-Kuznetsov functions of the first kind are studied, which are solutions of an inhomogeneous Weber parabolic cylinder differential equation, and have applications in fluid flow problems. Connection formulas are constructed between them, numerically satisfactory solutions of the homogeneous version of the differential equation, and a new complementary Nield-Kuznetsov function. Asymptotic expansions are then derived that are uniformly valid for large values of the parameter and unbounded real and complex values of the argument. Laplace transforms of the parabolic cylinder functions $W(a,x)$ and $U(a,x)$ are subsequently shown to be explicitly represented in terms of the complementary Nield-Kuznetsov function and closely related functions, and from these uniform asymptotic expansions are derived for these integrals.
\end{abstract}

\begin{keywords}
  {Parabolic cylinder functions, WKB methods, Asymptotic expansions, Laplace transforms}
\end{keywords}

\begin{AMS}
  33C10, 34E20, 34E05, 44A10 
\end{AMS}

\section{Introduction} 
\label{sec1}

The parametric Nield-Kuznetsov function of the first kind is given by \cite{Zaytoon:2016:WID}, \cite{Nield:2009:TET}

\begin{equation} \label{01}
N_{W}(a,x)=
W(a,x) \int_{0}^x W(a,-t)dt 
-W(a,-x)\int_{0}^x W(a,t)dt,
\end{equation}
which is a solution of 
\begin{equation} \label{02}
\frac{d^{2}y}{dx^{2}}+\left(\frac{1}{4}x^{2}-a \right)y=-1.
\end{equation}
In (\ref{01}) the Weber parabolic cylinder functions $W(a,\pm x)$ are numerically satisfactory solutions of the homogeneous form of (\ref{02}) when $a$ and $x$ are real, and for definitions and properties see \cite[Sect. 12.14]{NIST:DLMF}. Throughout this paper we assume that $a$ is real, but $x$ can be real or complex.

The function $N_{W}(a,x)$ appears in the study of fluid flow through a variable permeability porous layer, which involve Brinkman equations. These equations appear in the study of fast-moving fluids in porous media in which the kinetic potential from fluid velocity, pressure, and gravity drives the flow. In the study of flow through certain porous layers the Brinkman equations can be reduced to inhomogeneous forms of various linear second order differential equations. When the distribution of the reciprocal of the permeability is linear we get an inhomogeneous form of Airy’s equation \cite{Nield:2009:TET}. We mention that this equation was studied in \cite{Dunster:2020:ASI} where the inhomogeneous term was either a polynomial or an exponential.

For a quadratic distribution of the reciprocal of the permeability we get the parabolic cylinder equation, one form of which is (\ref{02}) studied here; see \cite{Alzahrani:2016:NKF} and \cite{Hamdan:2017:IBV}. In \cite{Zaytoon:2016:WID} series solutions for $N_{W}(a,x)$ for small $x$ were derived, and various other representations were derived in the above references.

In \cite{Zaytoon:2016:WID} the authors state "The current work involves real arguments of the Weber functions. In the general analysis of Weber equation and associated functions with complex arguments, the introduction of a Nield-Kuznetsov type function is still challenging and requires further consideration." In this paper we address this open problem, as described below.

From its definition the function $N_{W}(a,x)$ is readily seen to be even (and hence $N_{W}'(a,0)=0$), and also vanishes at $x=0$. These two properties define $N_{W}(a,x)$ uniquely, since if $y_{p}(a,x)$ is another even particular solution of (\ref{02}), then for some constant $c(a)$ it must be related by
\begin{equation} \label{02aa}
y_{p}(a,x)=
N_{W}(a,x) +c(a)\left\{W(a,x) + W(a,-x)\right\}.
\end{equation}
But if also $y_{p}(a,0)=0$ then $c(a)W(a,0)=0$ which implies $c(a)=0$ since $W(a,0) \neq 0$ (see (\ref{15}) below). Alternatively, uniqueness follows from the existence and uniqueness theorem for solutions of linear second order differential equations with given initial conditions at an ordinary point.

We introduce and study the complementary parametric Nield-Kuznetsov function
\begin{equation} \label{05}
\hat{N}_{W}(a,x)
 =W(a,x)\int_{x}^{\infty} W(a,-t)dt
-W(a,-x) \int_{x}^{\infty} W(a,t)dt 
\end{equation}
the negative of which is also a particular solution of (\ref{02}). This has the advantage over $N_{W}(a,x)$ that it is uniquely defined by its recessive behavior at $x=\infty$, and hence $y(a,x)=-N_{W}(a,x)$ is a numerically satisfactory particular solution of (\ref{02}). Specifically, as we shall show, for large positive $x$ this function decays monotonically and is $\mathcal{O}(x^{-2})$, whereas all other solutions of (\ref{02}) (including $N_{W}(a,x)$) are oscillatory with an amplitude that decays more slowly, namely asymptotically proportional to $x^{-1/2}$.

Furthermore, we shall extend the definition of $N_{W}(a,x)$ and $\hat{N}_{W}(a,x)$ to complex argument $x=z$ (say). We shall demonstrate that $\hat{N}_{W}(a,z) \rightarrow 0$ as $z \rightarrow \infty$ for $|\arg(z)| \leq \frac{1}{2}\pi - \delta$ ($\delta>0$), and also as $a \rightarrow \infty$ uniformly in part of the right half-plane that contains $\{z: \, \Re(z)\geq 2\sqrt{a} \}$. In contrast, all other solutions of (\ref{02}) are exponentially large as $z \rightarrow \infty$, and also as $a \rightarrow \infty$, in at least three of the four quadrants.

When $x$ is real and small $N_{W}(a,x)$ is useful for solving (\ref{02}) for the initial value problem $y(a,0)=\alpha$, $y'(a,0)=\beta$; see \cite{Hamdan:2017:IBV}. However, a solution involving $\hat{N}_{W}(a,x)$ is more practicable computationally. This is because as $x$ grows $N_{W}(a,x)$ becomes almost indistinguishable from a homogeneous solution. In addition, as we shall show it grows exponentially as $a \rightarrow \infty$ for all nonzero argument. 

The solution to the initial value problem in question is readily found to be
\begin{equation} \label{05h}
y(a,x)=
C^{+}(a)W(a,x)+C^{-}(a)W(a,-x)+N_{W}(a,x),
\end{equation}
where
\begin{equation} \label{05k}
\hat{C}^{\pm}(a)=\mp \beta W(a,0)-\alpha W'(a,0)
\end{equation}
In deriving this we used the relation $W(a,0)W'(a,0)=-\frac{1}{2}$, which follows from the Wronskian of $W(a,\pm x)$ \cite[Eq. 12.14.3]{NIST:DLMF}, or alternatively from their values at $x=0$ \cite[Eqs. 12.14.1, 12.14.2]{NIST:DLMF}
\begin{equation} \label{15}
W\left(a,0\right)=2^{-3/4}\left|\frac{\Gamma\left(\tfrac{1}{4}+\tfrac{1}{2}ia\right)}{\Gamma\left(\tfrac{3}{4}+\tfrac{1}{2}ia\right)}\right|^{1/2},
\end{equation}
and 
\begin{equation} \label{16}
W'\left(a,0\right)=-2^{-1/4}\left|\frac{\Gamma\left(\tfrac{3}{4}+
\tfrac{1}{2}ia\right)}{\Gamma\left(\tfrac{1}{4}+\tfrac{1}{2}ia\right)}\right|^{1/2}.
\end{equation}

We note that $W(a,0)=\mathcal{O}(a^{-1/4})$ and $W'(a,0)=\mathcal{O}(a^{1/4})$ as $a \rightarrow \infty$ (see (\ref{22a}) and (\ref{22b}) below). Now (\ref{15}) and (\ref{16}) of course can also be used in (\ref{05k}), and then (\ref{05h}) can be used for small values of $x$. For all other values of $x$ the Nield-Kuznetsov function $N_{W}(a,x)$ in (\ref{05h}) should be replaced by an expression involving the complementary function given by (\ref{05}). This can be done using an appropriate connection formula, which we derive in \cref{sec2} (see (\ref{06}) and \cref{thm2.7main} below). In fact, $\hat{N}_{W}(a,x)$ can be used for all $x$; see (\ref{22w}) along with \cref{lem2.4,lem2.6} for the case $x$ small.

For large real or complex values of $z$ and/or large positive $a$ we present uniform asymptotic expansions for $\hat{N}_{W}(a,z)$ and related functions. These are given in \cref{sec3}, employing recent results of \cite{Dunster:2020:ASI} and \cite{Dunster:2021:UAP}. They involve Scorer functions, which are solutions of the inhomogeneous Airy equation, and slowly varying coefficient functions.

In \cref{sec4,sec5} we study the Laplace transforms of parabolic cylinder functions, which turn out to be related to the Nield-Kuznetsov functions. Laplace transforms for a wide range of other special functions are well documented. For example, see \cite[Eqs. 9.10.14 and 10.22.49, Sect. 13.10(ii)]{NIST:DLMF} for Airy, Bessel and confluent hypergeometric functions, respectively, and \cite[pp. 170-176, 179–181]{Erdelyi:1954:TI1} for orthogonal polynomials and associated Legendre functions, respectively. However for the parabolic cylinder functions there do not appear to be comparable results in the literature. Existing formulas either involve an exponential factor with a quadratic exponent, and/or the argument of the parabolic cylinder function having a square root. 

Consider the standard parabolic cylinder function $U(a,z)$, which is a solution of the equation
\begin{equation}  \label{UEQ}
\frac{d^{2}y}{dz^{2}}-\left(\frac{1}{4}z^{2}+a \right)y=0,
\end{equation}
having the integral representation \cite[Eqs. 12.5.1 and 12.5.4]{NIST:DLMF}
\begin{equation} \label{UINT}
U(a,z)=\frac{e^{-\frac{1}{4}z^{2}}}
{\Gamma\left(\tfrac{1}{2}+a \right)}
\int_0^\infty {t^{a-\frac{1}{2}}
e^{-\frac{1}{2}t^{2}-zt}dt}
\quad \left(\Re(a)>-\tfrac{1}{2} \right).
\end{equation}
Its fundamental property is that it is the unique solution of (\ref{UEQ}) that is exponentially small as $z \rightarrow \infty$ for $|\arg(z)| \leq \frac{1}{4}\pi - \delta$ ($\delta>0$). Specifically, as $z \rightarrow \infty$
\begin{equation} \label{64a}
U(a,z)\sim z^{-a-\frac{1}{2}}
e^{-\frac{1}{4}z^{2}}
\quad \left(\vert \arg(z)\vert \le \tfrac{3}{4}\pi -\delta
<\tfrac{3}{4}\pi\right).
\end{equation}

Then, as an example of a known Laplace transform involving this function but also an exponential with quadratic exponent, from \cite[Eq. 4.20.1]{Erdelyi:1954:TI1} we find that for $\Re(\lambda)>0$ and $\Re(a)>-\frac{1}{2}$
\begin{equation} \label{05m}
\int_{0}^{\infty }e^{-\lambda t}
e^{\frac{1}{4}t^{2}} U(a,t) dt
=\frac{1}{\Gamma\left(\tfrac{1}{2}+a\right)}
\int_{0}^{\infty}
\frac{x^{a-{\frac{1}{2}}}
e^{-\frac{1}{2}x^{2}}}{\lambda+x}dx.
\end{equation}

In \cref{sec4,sec5} we fill the gap by obtaining explicit representations of the Laplace transforms of $W(a,\pm t)$ and $U(a,t)$, respectively, without any extraneous multiplicative terms. We find explicit expressions in terms of the complementary Nield-Kuznetsov function $\hat{N}_{W}(a,z)$ and related functions. From these and the asymptotics of \cref{sec3} we are then able to obtain powerful asymptotic expansions for the Laplace transforms, as $a \rightarrow \infty$, that are uniformly valid for all real and complex values of the Laplace variable $\lambda$. We also include asymptotic expansions for $a$ bounded and $|\lambda|$ large.

We mention that an explicit representation for the Laplace transform for $W(a,\pm x)$ for the case $\lambda=0$ is also provided by (\ref{22eee}) and \cref{thm2.7main} below, and likewise for $U(a,x)$ by (\ref{07}) (with $R=0$ in that formula). Apart from (\ref{07}) all these results appear to be new.

Parabolic cylinder functions are difficult to compute, especially for complex argument, on account of that they also depend on a parameter. This difficulty is compounded in the evaluation of their Laplace transforms, especially when the parameter $a$ is large. The importance of the new expansions of this paper is that they are simple to compute and achieve a uniform reduction of free variables from two to one, since they involve either elementary functions, or a special function of only one variable. In the latter case the approximants in question are the Airy function $\mathrm{Ai}(z)$, and the Scorer function $\mathrm{Hi}(z)$ which is a solution of an inhomogeneous Airy equation; see \cite[Sect. 9.12]{NIST:DLMF} for details. For numerical methods in the evaluation of these functions see \cite{Gil:2000:ONI}, \cite{Gil:2002:F7R} and \cite{GIL:2002:CCA}. Our uniform asymptotic expansions also clearly demonstrate the regions in the complex $\lambda$ plane where the Laplace transforms are exponentially large, exponentially small, and oscillatory.

The general asymptotic expansions used from \cite{Dunster:2020:ASI} and \cite{Dunster:2021:UAP} in \cref{sec3,sec4,sec5} are rigorously established via explicit error bounds, but for brevity we omit details of the bounds in this paper. To avoid phrase repetition, throughout this paper we denote by $\delta$ an arbitrary small positive constant.

\section{Fundamental solutions and connection formulas} 
\label{sec2}

For $z \in \mathbb{C}$ the homogeneous Weber differential equation
\begin{equation} \label{02a}
\frac{d^{2}y}{dz^{2}}+\left(\frac{1}{4}z^{2}-a \right)y=0,
\end{equation}
has numerically satisfactory solutions
\begin{equation} \label{02b}
W_{j}(a,z)
=U\left((-1)^{j}ia,(-i)^{j}ze^{-\pi i/4}\right)
\quad (j=0,1,2,3).
\end{equation}
These solutions are important because each $W_{j}(a,z)$ is recessive (exponentially small) at $z=e^{\pi i/4}i^{j}\infty$, whereas all other linearly independent solutions are exponentially large at that singularity. Note also that $W_{2}(a,z)=W_{0}(a,-z)$ and $W_{1}(a,z)=W_{3}(a,-z)$, and so we can primarily focus on $W_{0}(a,z)$ and $W_{3}(a,z)$.

As $z \rightarrow \infty$ for $j=0$ and $j=3$ we have from (\ref{64a}) and (\ref{02b})
\begin{equation} \label{02c}
W_{j}(a,z)\sim 
e^{-\frac{1}{4}\pi a \pm \frac{1}{8}\pi i}
z^{\mp ia-\frac{1}{2}}
e^{\pm \frac{1}{4}iz^{2}}
\quad \left(
\left\vert \arg\left( ze^{\mp \pi i/4}
\right) \right\vert
\leq \tfrac{3}{4}\pi-\delta \right),
\end{equation}
where the upper signs are taken for $j=0$ and lower signs for $j=3$. From this it is clear that these two form a numerically satisfactory pair of solutions of (\ref{02a}) in the right half-plane. The same is true for $W_{1}(a,z)$ and $W_{2}(a,z)$ in the left half-plane.

With these definitions, the Weber function $W(a,x)$ can then be extended to complex argument $x=z$ via the relation \cite[Eq. 12.14.4]{NIST:DLMF}
\begin{equation} 
\label{22e}
W(a,z)=\sqrt{\tfrac{1}{2}k} \, e^{\pi a/4}\left\{ e^{i\rho
}W_{0}(a,z)+e^{-i\rho }W_{3}(a,z) \right\},
\end{equation}
where
\begin{equation} 
\label{22f}
k=\sqrt {1+e^{2\pi a}} -e^{\pi a}
=\frac{1}{\sqrt {1+e^{2\pi a}}+e^{\pi a}},
\end{equation}
\begin{equation} \label{22h}
\rho = \tfrac{1}{2}\phi_{2}+\tfrac{1}{8}\pi,
\end{equation}
and
\begin{equation} 
\label{22i}
\phi_{2}=\arg \left\{\Gamma\left(\tfrac{1}
{2}+ia\right) \right\}.
\end{equation}
In (\ref{22i}) the branch of $\arg$ is taken to be zero when $a=0$ and then defined by continuity for $a>0$.

From (\ref{02b}) and \cite[Eq. 12.2.19]{NIST:DLMF} we note the connection formulas
\begin{equation} 
\label{71}
W_{0}(a,-z)=-ie^{\pi a}W_{0}(a,z) +
\frac{\sqrt{2\pi}e^{\frac{1}{2}\pi a+\frac{1}{4}\pi i}}{ \Gamma\left( \frac{1}{2}+ia \right)}W_{3}(a,z),
\end{equation}
\begin{equation} 
\label{72}
W_{3}(a,-z)=
\frac{\sqrt{2\pi}e^{\frac{1}{2}\pi a-\frac{1}{4}\pi i}}{ \Gamma\left( \frac{1}{2}-ia \right)}
W_{0}(a,z) +ie^{\pi a}W_{3}(a,z),
\end{equation}
and hence from (\ref{22e}) - (\ref{72})
\begin{equation} \label{22j}
W(a,-z)=-i\sqrt{\frac{1}{2k}} \, e^{\pi a/4}\left\{ e^{i\rho
}W_{0}(a,z)-e^{-i\rho }W_{3}(a,z) \right\}.
\end{equation}

In \cite{Dunster:2021:UAP} solutions were defined and studied for the following inhomogeneous version of (\ref{02a})
\begin{equation} \label{02d}
\frac{d^{2}y}{dz^{2}}+\left(\frac{1}{4}z^{2}-a \right)y=z^{R}
\quad (R=0,1,2,\ldots).
\end{equation}
From Eq. (4.31) of this reference we have the solution that interests us the most, namely
\begin{multline} \label{03}
 W_{R}^{(0,3)}(a,z)
 =ie^{\pi a/2}\left[
W_{3}(a,z)
 \int_{e^{\pi i/4}\infty}^z \right.
 t^{R}W_{0}(a,t)dt \\
\left. -W_{0}(a,z)
 \int_{e^{-\pi i/4}\infty}^z 
 t^{R}W_{3}(a,t)dt
 \right].
\end{multline}
This is defined and converges for all nonnegative integers $R$, and is significant because it is the unique solution that does not grow exponentially in the right half-plane. In particular it is $\mathcal{O}(z^{R-2})$ as $z \rightarrow \infty$ for $|\arg(z)| \leq \frac{1}{2} \pi - \delta$, uniformly for all real values of $a$. We shall primarily focus on the cases $R=0$ and $R=1$.

Firstly, for $R=0$ the following identification allows us to extend $\hat{N}_{W}(a,x)$ to complex argument $z$, and later extract its asymptotic behavior for large parameter and unbounded arguments. 

\begin{theorem}
For the analytic continuation of (\ref{05}) via (\ref{22e}) we have
\begin{multline} 
\label{04}
\hat{N}_{W}(a,z)=W_{0}^{(0,3)}(a,z)
 =ie^{\pi a/2}\left[W_{3}(a,z)
 \int_{e^{\pi i/4}\infty}^z W_{0}(a,t)dt
 \right. \\ \left.
 -W_{0}(a,z) \int_{e^{-\pi i/4}\infty}^z 
 W_{3}(a,t)dt \right].
\end{multline}
\end{theorem}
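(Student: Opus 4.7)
The natural route is to substitute the two representations (\ref{22e}) and (\ref{22j}) of $W(a,\pm z)$ directly into (\ref{05}) and to do the same for $W(a,\pm t)$ appearing under the integral signs, then to collect terms. Write $A=e^{\pi a/4}$, $\alpha=e^{i\rho}$, $\beta=e^{-i\rho}$, so that
\begin{equation*}
W(a,z)=\sqrt{\tfrac{k}{2}}\,A\bigl(\alpha W_{0}(a,z)+\beta W_{3}(a,z)\bigr),\qquad
W(a,-z)=-i\sqrt{\tfrac{1}{2k}}\,A\bigl(\alpha W_{0}(a,z)-\beta W_{3}(a,z)\bigr),
\end{equation*}
and analogously for the integrands with $z\mapsto t$. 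The prefactors conveniently collapse since $\sqrt{k/2}\cdot\sqrt{1/(2k)}=1/2$ and $\alpha\beta=1$.

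Second, I would rewrite the (only conditionally convergent) real integrals $\int_{x}^{\infty}W(a,\pm t)\,dt$ in terms of absolutely convergent integrals of $W_{0}$ and $W_{3}$ along the appropriate rotated rays. Since $W_{0}(a,t)$ is recessive as $t\to e^{\pi i/4}\infty$ and $W_{3}(a,t)$ is recessive as $t\to e^{-\pi i/4}\infty$ (by (\ref{02c})), and both functions are entire, Cauchy's theorem allows the replacement
\begin{equation*}
\int_{z}^{\infty}W_{0}(a,t)\,dt=\int_{z}^{e^{\pi i/4}\infty}W_{0}(a,t)\,dt,\qquad
\int_{z}^{\infty}W_{3}(a,t)\,dt=\int_{z}^{e^{-\pi i/4}\infty}W_{3}(a,t)\,dt,
\end{equation*}
for $z$ in a suitable sector containing the positive real axis. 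This is what extends (\ref{05}) analytically to complex $z$.

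Third, I would multiply out the bilinear expression
\begin{equation*}
W(a,z)\!\int_{z}^{\infty}\!\!W(a,-t)\,dt-W(a,-z)\!\int_{z}^{\infty}\!\!W(a,t)\,dt.
\end{equation*}
Denoting the two rotated integrals by $I_{0}$ and $I_{3}$, the $\alpha^{2}W_{0}I_{0}$ and $\beta^{2}W_{3}I_{3}$ contributions occur with matching signs in both products and cancel in the subtraction; the surviving cross-terms combine, via $\alpha\beta=1$, into $2(W_{0}I_{3}-W_{3}I_{0})$, and the global prefactor tidies up to $ie^{\pi a/2}$. Reversing the orientation of the two integrals then matches (\ref{04}) with $R=0$, identifying the result as $W_{0}^{(0,3)}(a,z)$.

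The only real obstacle is justifying the contour deformation of the (non-absolutely convergent) real integrals of $W(a,\pm t)$ into the absolutely convergent integrals of $W_{0}$ and $W_{3}$ along the rays of argument $\pm \pi/4$; this is the step that silently upgrades the real-variable definition (\ref{05}) to a complex-analytic one. Convergence at the rotated endpoints is immediate from (\ref{02c}), and absence of singularities inside the deformation follows since $W_{0}$ and $W_{3}$ are entire, so a standard contour-closing argument (plus an arc estimate using (\ref{02c})) suffices. Once that is in place the rest is bookkeeping, and the identity holds on the real axis by continuity.
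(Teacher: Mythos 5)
Your proposal is correct, but it takes a genuinely different route from the paper. You verify the identity \emph{directly}: substituting the connection formulas (\ref{22e}) and (\ref{22j}) into (\ref{05}), the prefactors combine to $-\tfrac{i}{2}e^{\pi a/2}$, the $\alpha^{2}W_{0}I_{0}$ and $\beta^{2}W_{3}I_{3}$ terms cancel, and the cross terms give $ie^{\pi a/2}\{W_{3}I_{0}-W_{0}I_{3}\}$ exactly as in (\ref{04}); the contour rotation from the real ray to $\arg t=\pm\pi/4$ is justified by a standard Jordan-type arc estimate using (\ref{02c}) (on the arc $t=Re^{i\theta}$, $0\le\theta\le\pi/4$, one has $\sin 2\theta\ge 4\theta/\pi$, so the arc contribution is $\mathcal{O}(R^{-3/2})$). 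The paper instead argues by \emph{uniqueness of the recessive particular solution}: it computes from the known large-$x$ asymptotics of $W(a,\pm x)$ that $\hat{N}_{W}(a,x)\sim 4/x^{2}$ as $x\to\infty$, observes that since every homogeneous solution is oscillatory with amplitude $\propto x^{-1/2}$ this $\mathcal{O}(x^{-2})$ decay pins down the particular solution uniquely, and then identifies $\hat N_W$ with $W_{0}^{(0,3)}$ because the latter is already known (from the cited reference) to share that decay. Your computation is more self-contained — it does not lean on the asymptotic characterization of $W_{0}^{(0,3)}$ from \cite{Dunster:2021:UAP} and it simultaneously \emph{constructs} the analytic continuation rather than asserting it — at the cost of the bookkeeping with $k$, $\rho$ and the contour-deformation lemma. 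The paper's argument is shorter and also yields the useful by-product (\ref{05f}) that is quoted later. Both are sound.
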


\begin{proof}
As $x \rightarrow \infty$ it follows from \cite[Eqs. 12.14.17 and 12.14.18]{NIST:DLMF} that
\begin{equation} \label{05a}
W(a,x) \sim \sqrt{\frac{2k}{x}}\cos(\omega),
\end{equation}
and
\begin{equation} \label{05b}
W(a,-x) \sim \sqrt{\frac{2}{kx}}\sin(\omega),
\end{equation}
where $k$ is given by (\ref{22f}),
\begin{equation} \label{05c}
\omega=\tfrac{1}{4}x^{2}-a\ln x+\tfrac{1}{4}\pi+\tfrac{1}{2}\phi_{2},
\end{equation}
and $\phi_{2}$ is given by (\ref{22i}). Now from (\ref{05a}), (\ref{05b}), and (\ref{05c}) one finds that
\begin{equation} \label{05d}
\int_{x}^{\infty} W(a,t)dt \sim
-\sqrt{k}\left(\frac{2}{x}\right)^{3/2}\sin(\omega),
\end{equation}
and
\begin{equation} \label{05e}
\int_{x}^{\infty} W(a,-t)dt \sim
\frac{1}{\sqrt{k}}\left(\frac{2}{x}\right)^{3/2}\cos(\omega).
\end{equation}
So from (\ref{05}), (\ref{05a}), (\ref{05b}), (\ref{05d}), and (\ref{05e}) we deduce that
\begin{equation} \label{05f}
\hat{N}_{W}(a,x) \sim 4/x^2.
\end{equation}

Now any solution $y(a,x)$ of (\ref{02}) can be expressed as
\begin{equation} \label{05g}
y(a,x) = c_{1}(a)W(a,x)+c_{2}(a)W(a,-x)
-\hat{N}_{W}(a,x),
\end{equation}
for some constants $c_{1}(a)$ and $c_{2}(a)$. But from (\ref{05a}), (\ref{05b}), (\ref{05f}), and (\ref{05g}) it is clear that $y(a,x)\sim -4x^{-2}$ as $x \rightarrow \infty$ iff $c_{1}(a)=c_{2}(a)=0$. Therefore $\hat{N}_{W}(a,z)$ and $W_{0}^{(0,3)}(a,z)$ must be equal, since the latter shares the unique behavior (\ref{05f}) as $z=x \rightarrow \infty$, and the negative of both are solutions of (\ref{02}).
\end{proof}

Having defined $\hat{N}_{W}(a,z)$ for complex $z$ we note that
\begin{equation} \label{22t}
\hat{N}_{W}(a,-z)
=\hat{N}_{W}(a,z)-2^{3/4}\pi i e^{\pi a}
\left\{
\frac{2^{ia/2}}{\Gamma(\tfrac{3}{4}-\tfrac{1}{2}ia)}W_{0}(a,z)
-\frac{2^{-ia/2}}{ \Gamma(\tfrac{3}{4}+\tfrac{1}{2}ia)}W_{3}(a,z)
\right\},
\end{equation}
which can be deduced from \cite[Eq. (5.33)]{Dunster:2021:UAP}.

Our primary focus now is the important connection formula, given as follows.
\begin{lemma} \label{lem2.2}
The Nield-Kuznetsov function (\ref{01}) and the complementary version (\ref{05}) are related by
\begin{equation} \label{06}
N_{W}(a,x)=
c_{0}^{+}(a)W(a,x)+c_{0}^{-}(a)W(a,-x)-\hat{N}_{W}(a,x),
\end{equation}
where
\begin{equation} \label{17}
c_{0}^{\pm}(a)=
\mp \hat{N}_{W}'(a,0)W(a,0) - \hat{N}_{W}(a,0)W'(a,0).
\end{equation}
\end{lemma}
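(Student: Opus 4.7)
The plan is to observe that both $N_{W}(a,x)$ and $-\hat{N}_{W}(a,x)$ are particular solutions of the inhomogeneous equation (\ref{02}), so their difference is annihilated by the homogeneous operator. Hence $N_{W}(a,x)+\hat{N}_{W}(a,x)$ solves (\ref{02a}), and since $W(a,x)$ and $W(a,\pm x)$ form a fundamental pair of solutions, it can be written as $c_{0}^{+}(a)W(a,x)+c_{0}^{-}(a)W(a,-x)$ for some coefficients depending only on $a$. Rearranging gives the shape of (\ref{06}); it remains to identify the two coefficients.

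To do this I would impose matching at $x=0$. Since $N_{W}(a,x)$ is even with $N_{W}(a,0)=0$, as noted in the paragraph following (\ref{02aa}), we also have $N_{W}'(a,0)=0$. Evaluating (\ref{06}) and its $x$-derivative at $x=0$, and using $\frac{d}{dx}W(a,-x)\big|_{x=0}=-W'(a,0)$, yields the $2\times 2$ system
\begin{equation*}
\left(c_{0}^{+}(a)+c_{0}^{-}(a)\right)W(a,0)=\hat{N}_{W}(a,0),\qquad
\left(c_{0}^{+}(a)-c_{0}^{-}(a)\right)W'(a,0)=\hat{N}_{W}'(a,0).
\end{equation*}

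Solving this system is the one place requiring a small manipulation: I would invoke the identity $W(a,0)W'(a,0)=-\tfrac{1}{2}$ (mentioned after (\ref{05k})) to eliminate the denominators, obtaining $1/W(a,0)=-2W'(a,0)$ and $1/W'(a,0)=-2W(a,0)$. Substituting these into
\begin{equation*}
c_{0}^{\pm}(a)=\tfrac{1}{2}\bigl[\hat{N}_{W}(a,0)/W(a,0)\pm\hat{N}_{W}'(a,0)/W'(a,0)\bigr]
\end{equation*}
collapses the expression to
\begin{equation*}
c_{0}^{\pm}(a)=\mp\hat{N}_{W}'(a,0)W(a,0)-\hat{N}_{W}(a,0)W'(a,0),
\end{equation*}
which is precisely (\ref{17}). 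The argument presents no substantial obstacle; the only point requiring care is keeping track of the sign introduced when differentiating $W(a,-x)$ and consistently applying the Wronskian-type identity to clear denominators so that the final formula matches (\ref{17}).
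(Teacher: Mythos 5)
Your proposal is correct and is essentially the paper's argument: the paper simply applies its previously stated initial-value-problem formula (\ref{05h})--(\ref{05k}) with $\alpha=-\hat{N}_{W}(a,0)$, $\beta=-\hat{N}_{W}'(a,0)$, and that formula was itself obtained by exactly the matching-at-the-origin computation (using $W(a,0)W'(a,0)=-\tfrac{1}{2}$) that you carry out explicitly. Your sign bookkeeping and the final form of $c_{0}^{\pm}(a)$ both check out.
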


\begin{proof}
The function $y(a,x)=-\hat{N}_{W}(a,x)$ is a solution of (\ref{02}) satisfying the initial conditions  $y(a,0)=\alpha=-\hat{N}_{W}(a,0)$ and $y'(a,0)=\beta=-\hat{N}_{W}'(a,0)$. Using these values in (\ref{05h}) and (\ref{05k}) yields the stated result.
\end{proof}

Our aim now is to find explicit and readily computable representations for these coefficients, specifically formulas for $\hat{N}_{W}(a,0)$ and $\hat{N}_{W}'(a,0)$, bearing in mind we already have (\ref{15}) and (\ref{16}). The main result is given by \cref{thm2.7main} below. To arrive at this, we begin by proving some preliminary results. First we have the following.

\begin{lemma} \label{lem2.3}
\begin{equation} \label{12}
 \hat{N}_{W}(a,0)
 =-\pi e^{\pi a/2} \Im\left\{ 
 \frac{e^{-\pi i/4}}{\Gamma\left(\frac{3}{4}
 +\frac{1}{2}ia\right)}
\mathbf{F}\left(\tfrac{1}{2}, 1;
\tfrac{5}{4}-\tfrac{1}{2}ia;\tfrac{1}{2}
\right) \right\},
\end{equation}
where $\mathbf{F}$ is Olver's scaled hypergeometric function \cite[Eq. 15.2.2]{NIST:DLMF}
\begin{equation} \label{08}
\mathbf{F}(a,b;c,z)
=\frac{F(a,b;c,z)}{\Gamma(c)}
=\sum_{s=0}^{\infty}
\frac{(a)_{s}(b)_{s}z^{s}}{\Gamma(c+s)s!}.
\end{equation}
\end{lemma}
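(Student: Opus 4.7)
My plan is to evaluate $\hat N_W(a,0)$ directly from the contour-integral representation (\ref{04}), collapsing the two complex rays onto the positive real axis and then applying a Laplace-transform-of-erfc identity. Setting $z=0$ in (\ref{04}) and parametrising $t=e^{\pm i\pi/4}s$ along the two rays, the identifications $W_0(a,e^{i\pi/4}s)=U(ia,s)$ and $W_3(a,e^{-i\pi/4}s)=U(-ia,s)$ from (\ref{02b}) produce
\begin{equation*}
\hat N_W(a,0) = -ie^{\pi a/2}\bigl[e^{i\pi/4}W_3(a,0)\,J(ia) - e^{-i\pi/4}W_0(a,0)\,J(-ia)\bigr],
\end{equation*}
where $J(c):=\int_0^\infty U(c,s)\,ds$. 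For real $a$, conjugation of the integral representation (\ref{UINT}) and of the closed form $U(c,0)=\sqrt\pi\,2^{-1/4-c/2}/\Gamma(3/4+c/2)$ gives $\overline{J(ia)}=J(-ia)$ and $\overline{W_0(a,0)}=W_3(a,0)$, so the bracket is of the form $X-\overline X=2i\,\Im X$ and $\hat N_W(a,0)=2e^{\pi a/2}\Im\{e^{i\pi/4}W_3(a,0)J(ia)\}$.

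The crux is to evaluate $J(c)$ in closed form. Inserting (\ref{UINT}), swapping the order of integration, and completing the square in the inner $s$-integral produces $\int_0^\infty e^{-s^2/4-st}\,ds=\sqrt\pi\,e^{t^2}\mathrm{erfc}(t)$, so that after the substitution $w=t^2$, $J(c)$ reduces to a Laplace-type integral $\int_0^\infty w^{c/2-3/4}e^{w/2}\mathrm{erfc}(\sqrt w)\,dw$ up to explicit prefactors. The key identity I use is
\begin{equation*}
\int_0^\infty w^{s-1}e^{-pw}\mathrm{erfc}(\sqrt w)\,dw \;=\; \frac{\Gamma(s+\tfrac12)}{s\sqrt\pi}\,{}_2F_1\bigl(s,\,s+\tfrac12;\,s+1;\,-p\bigr),
\end{equation*}
which I derive by writing $\mathrm{erfc}(\sqrt w)=\pi^{-1/2}\int_w^\infty u^{-1/2}e^{-u}\,du$, swapping once more, recognising the resulting incomplete gamma via $\gamma(s,z)=s^{-1}z^s M(s,s+1,-z)$, and applying the standard Laplace transform $\int_0^\infty x^{b-1}e^{-x}M(a,c,\alpha x)\,dx=\Gamma(b)\,{}_2F_1(a,b;c;\alpha)$ (valid here since $|\alpha|=1/2<1$). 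Taking $s=c/2+1/4$ and $p=-1/2$ and invoking Legendre's duplication formula for $\Gamma(c+1/2)$, the Gammas collapse to
\begin{equation*}
J(c) = \sqrt\pi\,2^{-c-1/2}\,\mathbf F\bigl(c/2+\tfrac14,\;c/2+\tfrac34;\;c/2+\tfrac54;\;\tfrac12\bigr).
\end{equation*}

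To reach the form (\ref{12}), I apply Euler's transformation $F(\alpha,\beta;\gamma;z)=(1-z)^{\gamma-\alpha-\beta}F(\gamma-\alpha,\gamma-\beta;\gamma;z)$ with $\gamma-\alpha=1,\gamma-\beta=1/2$, which rewrites the hypergeometric in $J(c)$ as $2^{c/2-1/4}\mathbf F(1/2,1;c/2+5/4;1/2)$. Setting $c=ia$ and substituting $W_3(a,0)=\sqrt\pi\,2^{-1/4+ia/2}/\Gamma(3/4-ia/2)$ makes every power of $2$ cancel, yielding $e^{i\pi/4}W_3(a,0)J(ia)=\pi e^{i\pi/4}\mathbf F(1/2,1;5/4+ia/2;1/2)/(2\Gamma(3/4-ia/2))$. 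Finally, $-\Im(w)=\Im(\bar w)$ combined with termwise conjugation of $\mathbf F$ and $\Gamma$ (valid since $a$ is real) converts this into the stated form (\ref{12}). The main technical obstacle is the Laplace-transform-of-erfc identity above; once in hand the rest is bookkeeping of Gammas and powers of $2$, with convergence of every intermediate integral guaranteed by $\Re(c)=0>-1/2$.
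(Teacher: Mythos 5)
Your argument is correct, and its skeleton coincides with the paper's: set $z=0$ in (\ref{04}), rotate the two rays onto the positive real axis via $t\mapsto e^{\pm\pi i/4}s$, use the conjugate-symmetry of the two terms to write $\hat N_W(a,0)$ as an imaginary part, and then evaluate $\int_0^\infty U(\pm ia,t)\,dt$ in closed form. (Your sign conventions check out: your $2e^{\pi a/2}\Im\{e^{i\pi/4}W_3(a,0)J(ia)\}$ is the conjugate form of the paper's $-2e^{\pi a/2}\Im\{e^{-\pi i/4}U(ia,0)\int_0^\infty U(-ia,t)dt\}$.) Where you genuinely diverge is at the crux: the paper simply cites the tabulated identity (\ref{07}) from Prudnikov et al., whereas you derive the $R=0$ case from scratch — inserting (\ref{UINT}), completing the square to produce $\sqrt{\pi}\,e^{t^2}\mathrm{erfc}(t)$, proving a Laplace-transform-of-$\mathrm{erfc}$ formula via the incomplete gamma and the Kummer-to-Gauss transform, and then using Legendre duplication plus Euler's transformation to land on $\sqrt{\pi}\,2^{-c/2-3/4}\mathbf{F}(\tfrac12,1;\tfrac{c}{2}+\tfrac54;\tfrac12)$, which is exactly (\ref{07}) with $R=0$. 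I verified the intermediate Gauss function $\mathbf{F}(\tfrac{c}{2}+\tfrac14,\tfrac{c}{2}+\tfrac34;\tfrac{c}{2}+\tfrac54;\tfrac12)$ and the powers of $2$; they are consistent. Your route buys self-containedness and an independent check of the "little-known" table entry, at the cost of extra Fubini/contour-rotation justifications (all of which hold here since $\Re(\pm ia)=0>-\tfrac12$ and $U$ decays like $e^{-z^2/4}$ in the relevant sectors); the paper's route is shorter but rests entirely on the cited reference. No gaps.
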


\begin{proof}
From (\ref{04}) we have
\begin{multline} \label{09a}
 \hat{N}_{W}(a,0)
 =ie^{\pi a/2}\left[
 W_{0}(a,0) \int_{0}^{e^{-\pi i/4}\infty} W_{3}(a,t)dt
 \right. \\ 
 \left.
- W_{3}(a,0) \int_{0}^{{e^{\pi i/4}\infty}} W_{0}(a,t)dt
 \right].
\end{multline}
The definition of the $W$ functions in terms of $U$ given by (\ref{02b}) then allows us to recast this in the form
\begin{multline} \label{09}
 \hat{N}_{W}(a,0)
 =ie^{\pi a/2}\left[
 U(ia,0) \int_{0}^{e^{-\pi i/4}\infty}
 U\left(-ia,te^{\pi i/4}\right)dt
 \right. \\ \left.
 - U(-ia,0)
 \int_{0}^{e^{\pi i/4}\infty}
 U\left(ia,te^{-\pi i/4}\right)dt
 \right].
\end{multline}
We now make change of integration variables $t \rightarrow t e^{\pm \pi i/4}$ to arrive at
\begin{multline} \label{10a}
 \hat{N}_{W}(a,0)
 =ie^{\pi a/2}\left[ 
 e^{-\pi i/4}U(ia,0)
 \int_{0}^{\infty} U(-ia,t)dt
 \right. \\ \left.
 -e^{\pi i/4}U(-ia,0)
 \int_{0}^{\infty} U(ia,t)dt
 \right],
\end{multline}
which yields
\begin{equation} \label{10}
 \hat{N}_{W}(a,0)
 =-2e^{\pi a/2} \Im\left\{ 
 e^{-\pi i/4}U(ia,0)
 \int_{0}^{\infty} U(-ia,t)dt
 \right\}.
\end{equation}

Next we record the very useful but little-known identity \cite[Eq. 2.11.2.1]{Prudnikov:1986:IAS}
\begin{multline} 
\label{07}
\int_{0}^{\infty}t^{R}U(a,t)dt=
2^{-\frac{1}{2}a-\frac{1}{2}R-\frac{3}{4}}
\sqrt{\pi}R!
\\ \times
\mathbf{F}\left(
\tfrac{1}{2}R+\tfrac{1}{2}, \tfrac{1}{2}R+1;
\tfrac{1}{2}a+\tfrac{1}{2}R+\tfrac{5}{4};\tfrac{1}{2}
\right) \quad (R=0,1,2,\ldots),
\end{multline}
(see also \cite[Sect. 8.1.5]{Magnus:1966:FTS} for a more general form), along with \cite[Eq. 12.2.6]{NIST:DLMF}
\begin{equation} 
\label{11}
U(ia,0)=\frac{\sqrt{\pi}}{2^{\frac{1}{2}ia+\frac{1}{4}}
\Gamma\left(\frac{3}{4}+\frac{1}{2}ia\right)}.
\end{equation}
Setting $R=0$ and replacing $a$ by $-ia$ in (\ref{07}), and putting this along with (\ref{11}) into (\ref{10}), then establishes (\ref{12}).
\end{proof}

Although we now have an explicit representation for $\hat{N}_{W}(a,0)$, the form (\ref{12}) is hard to compute when $a$ is large, since the imaginary part we need to extract is exponentially small relative to the real part of the function in question. For example if $a=20$, then to 4 decimal places
\begin{equation} \label{11b}
 \frac{e^{-\pi i/4}}{\Gamma\left(\frac{3}{4}
 +\frac{1}{2}ia\right)}
\mathbf{F}\left(\tfrac{1}{2}, 1;
\tfrac{5}{4}-\tfrac{1}{2}ia;\tfrac{1}{2}\right)
=7.0167 \times 10^{11} - 0.1262 \, i.
\end{equation}

We overcome this with the following numerically stable representation.
\begin{lemma} \label{lem2.4}
\begin{equation} \label{12cc}
\hat{N}_{W}(a,0)=2e^{\pi a} X(a) - 2Y(a),
\end{equation}
where
\begin{equation} \label{X1}
X(a)=\frac {\left| \Gamma\left( \tfrac{1}{4}+\tfrac{1}{2}ia \right) \right|^{2}}{4\sqrt {2\pi} },
\end{equation}
and
\begin{equation} \label{Y}
Y(a)= \Im\left\{ (1-2ia)^{-1}
F\left(\tfrac{1}{2}, 1;
\tfrac{5}{4}-\tfrac{1}{2}ia;\tfrac{1}{2}\right)
 \right\}.
\end{equation}
\end{lemma}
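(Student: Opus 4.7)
The plan is to start from \cref{lem2.3} and convert (\ref{12}) into the claimed form via Euler's reflection formula followed by a Beta integral evaluation. Writing $\mathbf{F} = F/\Gamma(\tfrac{5}{4}-\tfrac{1}{2}ia)$ in (\ref{12}), I would first use $\Gamma(\tfrac{5}{4}-\tfrac{1}{2}ia) = \tfrac{1}{4}(1-2ia)\Gamma(\tfrac{1}{4}-\tfrac{1}{2}ia)$ together with the reflection identity $\Gamma(\tfrac{3}{4}+\tfrac{1}{2}ia)\Gamma(\tfrac{1}{4}-\tfrac{1}{2}ia) = \pi/\sin(\pi(\tfrac{1}{4}-\tfrac{1}{2}ia))$ to collapse the $\Gamma$ quotient to $4\sin(\pi(\tfrac{1}{4}-\tfrac{1}{2}ia))/[(1-2ia)\pi]$. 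A short computation with $\cos(i\theta)=\cosh\theta$, $\sin(i\theta)=i\sinh\theta$ then yields
\begin{equation*}
e^{-\pi i/4}\sin\bigl(\pi(\tfrac{1}{4}-\tfrac{1}{2}ia)\bigr) = \tfrac{1}{2}\bigl(e^{-\pi a/2} - ie^{\pi a/2}\bigr),
\end{equation*}
and substituting back gives $\hat{N}_W(a,0) = 2 e^{\pi a}\Re(A) - 2\Im(A)$ with $A := F(\tfrac{1}{2},1;\tfrac{5}{4}-\tfrac{1}{2}ia;\tfrac{1}{2})/(1-2ia)$. The term $-2\Im(A)$ reproduces $-2Y(a)$ directly from the definition (\ref{Y}).

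To identify the remaining piece with $2e^{\pi a}X(a)$, I would apply the Euler integral representation $F(\tfrac{1}{2},1;c;\tfrac{1}{2}) = (c-1)\int_0^1 (1-t)^{c-2}(1-\tfrac{1}{2}t)^{-1/2}\,dt$ with $c = \tfrac{5}{4}-\tfrac{1}{2}ia$, noting that the prefactor $c-1 = \tfrac{1}{4}(1-2ia)$ cancels the denominator of $A$. The substitution $u = 1-t$ then produces
\begin{equation*}
A = \frac{1}{2\sqrt{2}}\int_0^1 u^{-3/4-ia/2}(1+u)^{-1/2}\,du.
\end{equation*}

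Finally, the involution $u \mapsto 1/u$ maps $\int_1^\infty u^{-3/4-ia/2}(1+u)^{-1/2}\,du$ onto the complex conjugate of the preceding integral (since $a$ is real), so $4\sqrt{2}\,\Re(A)$ equals the full-axis integral $\int_0^\infty u^{-3/4-ia/2}(1+u)^{-1/2}\,du$, which is the standard Beta integral $B(\tfrac{1}{4}-\tfrac{1}{2}ia,\tfrac{1}{4}+\tfrac{1}{2}ia) = |\Gamma(\tfrac{1}{4}+\tfrac{1}{2}ia)|^2/\sqrt{\pi}$; dividing by $4\sqrt{2}$ recovers $X(a)$ exactly. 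The main obstacle is spotting these two simplifications: that the reflection formula turns the awkward $\Gamma$ ratio into a sine whose product with $e^{-\pi i/4}$ cleanly isolates the two scales $e^{\pm\pi a/2}$, and that the resulting half-interval integral symmetrizes under $u \mapsto 1/u$ into a closed-form Beta integral evaluating to precisely $X(a)$. Once these two observations are in place, the remainder is bookkeeping with hypergeometric and trigonometric identities.
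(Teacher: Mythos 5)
Your proposal is correct, and the two halves of it relate to the paper's proof in different ways. The first half — rewriting $\mathbf{F}$ as $F/\Gamma(\tfrac{5}{4}-\tfrac{1}{2}ia)$, using the recurrence $\Gamma(\tfrac{5}{4}-\tfrac{1}{2}ia)=\tfrac{1}{4}(1-2ia)\Gamma(\tfrac{1}{4}-\tfrac{1}{2}ia)$ and the reflection formula to produce the factor $\tfrac{1}{2}(e^{-\pi a/2}-ie^{\pi a/2})$, and then reading off $\hat{N}_W(a,0)=2e^{\pi a}\Re(A)-2\Im(A)$ with $A=(1-2ia)^{-1}F(\tfrac{1}{2},1;\tfrac{5}{4}-\tfrac{1}{2}ia;\tfrac{1}{2})$ — is exactly the computation in (\ref{12bb}), leading to (\ref{12cc}) with $X(a)$ given by (\ref{X}) and $Y(a)$ by (\ref{Y}). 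Where you genuinely diverge is in proving $\Re(A)=X(a)$ in the closed form (\ref{X1}). The paper quotes the Gauss connection formula (\ref{12ff}) (assembled from DLMF 5.4.6, 5.5.1, 15.4.6, 15.10.21), divides by $1-2ia$, and takes real parts, which amounts to the identity $A+\overline{A}=\left|\Gamma(\tfrac{1}{4}+\tfrac{1}{2}ia)\right|^{2}/(2\sqrt{2\pi})$. You instead derive that same identity from first principles: the Euler integral (valid since $\Re(c)=\tfrac{5}{4}>1$) gives $A=\tfrac{1}{2\sqrt{2}}\int_0^1 u^{-3/4-ia/2}(1+u)^{-1/2}\,du$ after the prefactor $c-1=\tfrac{1}{4}(1-2ia)$ cancels the denominator, the involution $u\mapsto 1/u$ sends the tail $\int_1^\infty$ to the conjugate of the head (one checks the exponent $-\tfrac{3}{4}-\tfrac{1}{2}ia$ maps to $-\tfrac{3}{4}+\tfrac{1}{2}ia$ after absorbing the Jacobian and the $(1+u)^{-1/2}$ factor), and the resulting full-axis integral is $B(\tfrac{1}{4}-\tfrac{1}{2}ia,\tfrac{1}{4}+\tfrac{1}{2}ia)=\left|\Gamma(\tfrac{1}{4}+\tfrac{1}{2}ia)\right|^{2}/\sqrt{\pi}$, both Beta parameters having positive real part. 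I verified each of these steps and they are sound. Your route is more self-contained — it replaces an appeal to a specific ${}_2F_1$ connection formula with a transparent integral symmetry — at the cost of being slightly longer; the paper's route is shorter but leans on the cited DLMF identities.
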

\begin{proof}
From (\ref{08}) and well-known functional relations for the gamma function $\Gamma(z+1)=z\Gamma(z)$ and
\begin{equation} \label{reflect}
\frac{1}{\Gamma(z)}=
\frac{\sin(\pi z)\Gamma(1-z)}{\pi},
\end{equation}
we obtain
\begin{multline} \label{12bb}
 \frac{e^{-\pi i/4}}{\Gamma\left(\frac{3}{4}
 +\frac{1}{2}ia\right)}
\mathbf{F}\left(\tfrac{1}{2}, 1;
\tfrac{5}{4}-\tfrac{1}{2}ia;\tfrac{1}{2}
\right) \\
 =\frac{e^{-\pi i/4}}{\Gamma\left(\frac{3}{4}
 +\frac{1}{2}ia\right)
 \Gamma\left(\frac{5}{4}
 -\frac{1}{2}ia\right)}
F\left(\tfrac{1}{2}, 1;
\tfrac{5}{4}-\tfrac{1}{2}ia;\tfrac{1}{2}
\right) \\
=\frac {2\left(e^{-\pi a/2}-i e^{\pi a/2}\right)}
{(1-2ia)\pi }
 F\left(\tfrac{1}{2}, 1;
\tfrac{5}{4}-\tfrac{1}{2}ia;\tfrac{1}{2}\right).
\end{multline}
Thus from (\ref{12}) we get (\ref{12cc}) where
\begin{equation} \label{X}
X(a)= \Re\left\{ (1-2ia)^{-1}
F\left(\tfrac{1}{2}, 1;
\tfrac{5}{4}-\tfrac{1}{2}ia;\tfrac{1}{2}\right)
 \right\},
\end{equation}
and $Y(a)$ is given by (\ref{Y}).

Finally, from \cite[Eqs. 5.4.6, 5.5.1, 15.4.6, 15.10.21]{NIST:DLMF} we obtain the relation
\begin{equation} \label{12ff}
F\left(\tfrac{1}{2},1;\tfrac{5}{4}-\tfrac{1}{2}ia;\tfrac{1}{2} \right)=-\frac {1-2ia}{1+2ia}
F\left(\tfrac{1}{2},1;\tfrac{5}{4}+\tfrac{1}{2}ia;\tfrac{1}{2} \right)
+\frac {(1-2ia)\left| \Gamma\left( \tfrac{1}{4}+\tfrac{1}{2}ia \right) \right|^{2}}{2\sqrt {2\pi} }.
\end{equation}
Now divide both sides by $1-2ia$, equate the real parts of both sides, then refer to (\ref{X}), and as a result (\ref{X1}) is verified.
\end{proof}

We shall frequently use Stirling's formula \cite[Eq. 5.11.3]{NIST:DLMF}
\begin{equation} \label{stirling}
\Gamma\left(z\right)\sim e^{-z}z^{z}\left(\frac{2\pi}{z}
\right)^{1/2}\sum_{k=0}^{\infty}\frac{g_{k}}{z^{k}},
\end{equation}
as $z \rightarrow \infty$ for $|\arg(z)| \leq \pi - \delta$, where $g_{0}=1$, $g_{1}=\tfrac{1}{12}$ and subsequent coefficients given by \cite[Eqs. 5.11.5	and 5.11.6]{NIST:DLMF}. From this we have the following.

\begin{lemma} \label{lem2.5} As $a \rightarrow \infty$
\begin{equation} \label{12ee}
\hat{N}_{W}(a,0)
=\sqrt{\frac{\pi}{a}}e^{\pi a/2}
\left\{1+ \mathcal{O}\left(\frac{1}{a^2}\right) \right\}.
\end{equation}
\end{lemma}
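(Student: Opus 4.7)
The plan is to start from the decomposition provided by Lemma \ref{lem2.4}, namely $\hat{N}_W(a,0) = 2e^{\pi a}X(a) - 2Y(a)$, and show that $2e^{\pi a}X(a)$ supplies the full leading term $\sqrt{\pi/a}\,e^{\pi a/2}$ with relative error $\mathcal{O}(1/a^2)$, while $2Y(a)$ is exponentially negligible in comparison.

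For $X(a)$, I would apply Stirling's formula (\ref{stirling}) to $|\Gamma(\tfrac14+\tfrac12 ia)|^2 = \Gamma(\tfrac14+\tfrac12 ia)\,\Gamma(\tfrac14-\tfrac12 ia)$, writing $\ln|\Gamma|^2 = 2\Re\ln\Gamma$. Using $|z| = (a/2)\sqrt{1+1/(4a^2)}$ and $\arg z = \tfrac{\pi}{2} - \tfrac{1}{2a} + \mathcal{O}(1/a^3)$ for $z=\tfrac14+\tfrac12 ia$, the leading contribution from $(z-\tfrac12)\ln z - z + \tfrac12\ln(2\pi)$ simplifies to give $|\Gamma(\tfrac14+\tfrac12 ia)|^2 \sim 2\pi\sqrt{2/a}\,e^{-\pi a/2}$, whence $2e^{\pi a}X(a) \sim \sqrt{\pi/a}\,e^{\pi a/2}$. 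The main obstacle is showing that the relative error is $\mathcal{O}(1/a^2)$ rather than $\mathcal{O}(1/a)$. This relies on a symmetry observation: the Stirling asymptotic series contributes only odd inverse powers $z^{-(2k-1)}$, and for $z$ nearly pure imaginary one has $\Re(z^{-(2k-1)}) = \mathcal{O}(a^{-2k})$ (since $\cos((2k-1)\arg z) = \mathcal{O}(1/a)$ when $\arg z \approx \pi/2$). Hence $\ln|\Gamma(z)|^2 = 2\Re\ln\Gamma(z)$ admits an asymptotic expansion in powers of $1/a^2$, giving
\begin{equation*}
X(a) = \frac{\sqrt{\pi}}{2\sqrt{a}}\, e^{-\pi a/2}\bigl[1 + \mathcal{O}(1/a^2)\bigr].
\end{equation*}

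For $Y(a)$, since the third parameter $c = \tfrac54 - \tfrac12 ia$ of the Gauss hypergeometric series tends to infinity, every non-leading term in the defining series (\ref{08}) is $\mathcal{O}(1/a)$, so $F(\tfrac12, 1; \tfrac54-\tfrac12 ia; \tfrac12) = 1 + \mathcal{O}(1/a)$. Combined with $(1-2ia)^{-1} = \mathcal{O}(1/a)$, this yields $Y(a) = \mathcal{O}(1/a)$. Consequently $2Y(a)/[\sqrt{\pi/a}\,e^{\pi a/2}] = \mathcal{O}(\sqrt{a}\,e^{-\pi a/2})$, which is exponentially small and easily absorbed into the $\mathcal{O}(1/a^2)$ error coming from $X(a)$. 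Assembling the two estimates via Lemma \ref{lem2.4} delivers the stated asymptotic (\ref{12ee}).
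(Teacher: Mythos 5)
Your proposal is correct and follows essentially the same route as the paper: decompose via Lemma \ref{lem2.4}, apply Stirling's formula to $X(a)$ to obtain $X(a)=\tfrac{\sqrt{\pi}}{2\sqrt{a}}e^{-\pi a/2}\{1+\mathcal{O}(a^{-2})\}$, and observe that $Y(a)=\mathcal{O}(1/a)$ so that $2Y(a)$ is exponentially negligible against the leading term. Your parity argument for why the relative error in $X(a)$ is $\mathcal{O}(a^{-2})$ rather than $\mathcal{O}(a^{-1})$ is a correct and welcome elaboration of the step the paper leaves implicit.
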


\begin{proof}
From (\ref{X1}) and (\ref{stirling}) one finds that
\begin{equation} \label{X2}
X(a)=\frac{\sqrt{\pi} e^{-\pi a/2}}{2\sqrt{a}}
\left\{1+ \mathcal{O}\left(\frac{1}{a^2}\right)
\right\}.
\end{equation}
Also directly from (\ref{08}) and (\ref{Y}) it is seen that
\begin{equation} \label{Y1}
Y(a) \sim \sum_{k=0}^{\infty}
\frac{y_{2k+1}}{a^{2k+1}},
\end{equation}
where the first four coefficients are $y_1=\frac{1}{2}$, $y_3=\frac{1}{4}$, $y_5=\frac{7}{8}$ and $y_7=\frac{139}{16}$.
Thus from (\ref{12cc}) we get (\ref{12ee})
\end{proof}

\begin{remark} A more accurate approximation comes from retaining the gamma function in the leading term, so that
\begin{equation} \label{12eee}
\hat{N}_{W}(a,0)= \frac {e^{\pi a}
\left| \Gamma\left( \tfrac{1}{4}+\tfrac{1}{2}ia \right) \right|^{2}}{2\sqrt {2\pi}  }
\left\{1+ \mathcal{O}\left(\frac{e^{-\pi a/2}}{\sqrt{a}}\right)
\right\} \quad (a \rightarrow \infty).
\end{equation}
\end{remark}

Next we establish a simple explicit value for the derivative at the origin.
\begin{lemma} \label{lem2.6}
\begin{equation} \label{14b}
\hat{N}_{W}'(a,0)
=-\sqrt {\pi} e^{\pi a/2}.
\end{equation}
\end{lemma}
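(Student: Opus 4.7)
The cleanest route is to exploit the reflection-type connection formula (\ref{22t}), which already isolates the non-even part of $\hat{N}_W(a,z)$ as a combination of $W_0$ and $W_3$. Differentiating both sides with respect to $z$ (the left side picks up a minus sign from the chain rule) and setting $z=0$ makes the two copies of $\hat{N}_W'(a,0)$ add rather than cancel, yielding
\begin{equation*}
2\hat{N}_W'(a,0) = 2^{3/4}\pi i e^{\pi a}\left\{\frac{2^{ia/2}}{\Gamma(\tfrac{3}{4}-\tfrac{1}{2}ia)}W_0'(a,0) - \frac{2^{-ia/2}}{\Gamma(\tfrac{3}{4}+\tfrac{1}{2}ia)}W_3'(a,0)\right\}.
\end{equation*}
Thus everything hinges on substituting explicit values for $W_0'(a,0)$ and $W_3'(a,0)$.

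These are immediate from (\ref{02b}) combined with the companion of (\ref{11}), i.e.\ the closed form $U'(a,0) = -\sqrt{\pi}\,2^{-a/2+1/4}/\Gamma(\tfrac{1}{4}+\tfrac{1}{2}a)$ (DLMF 12.2.7): one obtains $W_0'(a,0) = e^{-i\pi/4}U'(ia,0)$ and $W_3'(a,0) = e^{i\pi/4}U'(-ia,0)$. After substitution the various powers of $2$ collapse to a single factor, leaving the bracket
\begin{equation*}
\frac{e^{i\pi/4}}{\Gamma(\tfrac{3}{4}+\tfrac{1}{2}ia)\Gamma(\tfrac{1}{4}-\tfrac{1}{2}ia)} - \frac{e^{-i\pi/4}}{\Gamma(\tfrac{3}{4}-\tfrac{1}{2}ia)\Gamma(\tfrac{1}{4}+\tfrac{1}{2}ia)}
\end{equation*}
multiplied by $2\pi^{3/2} i e^{\pi a}$. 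The reflection formula (\ref{reflect}) turns each gamma product into $\pi/\sin(\pi(\tfrac{3}{4}\pm\tfrac{1}{2}ia))$, which I would then expand as $\tfrac{1}{2}(e^{\mp i\pi/4}e^{\pm\pi a/2}+e^{\pm i\pi/4}e^{\mp\pi a/2})$, reducing the whole expression to elementary exponentials.

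The only real obstacle is the resulting bookkeeping: tracking the eight combinations of $e^{\pm i\pi/4}$ and $e^{\pm\pi a/2}$ produced when the two reciprocal sines are expanded. However, the $e^{\pm i\pi/4}$ prefactors are arranged so that the contributions proportional to $e^{\pi a}\cdot e^{\pi a/2}=e^{3\pi a/2}$ cancel between the two terms of the bracket, while the surviving $e^{\pi a}\cdot e^{-\pi a/2}=e^{\pi a/2}$ contributions combine into $i e^{\pi a/2}(e^{i\pi/2}-e^{-i\pi/2})/\pi = -2 e^{\pi a/2}/\pi$. Feeding this back gives $2\hat{N}_W'(a,0) = -2\sqrt{\pi}\,e^{\pi a/2}$, which is (\ref{14b}). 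No hypergeometric machinery is needed, in contrast to the evaluation of $\hat{N}_W(a,0)$ in \cref{lem2.3}, precisely because the derivative form of $U'(a,0)$ already has a pure gamma factor in place of the Prudnikov integral.
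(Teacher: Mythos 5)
Your argument is correct, and it is genuinely different from the one in the paper. The paper evaluates $\hat{N}_{W}'(a,0)$ by differentiating the integral representation (\ref{03}) at the origin, invoking Prudnikov's formula (\ref{07}) for $\int_{0}^{\infty}U(a,t)\,dt$ to land on the hypergeometric expression (\ref{14}), and then collapsing that to the elementary closed form only by appealing to the identity (\ref{X1}) established in \cref{lem2.4} (which itself rests on the Gauss-type transformation (\ref{12ff})). Your route bypasses all of that: differentiating the reflection formula (\ref{22t}) at $z=0$ exploits the fact that $-\hat{N}_{W}(a,z)$ and $-\hat{N}_{W}(a,-z)$ solve the same inhomogeneous equation, so their difference is the explicit combination of $W_{0}$ and $W_{3}$ recorded in (\ref{22t}); the derivative at the origin then needs only $W_{0}'(a,0)=e^{-\pi i/4}U'(ia,0)$, $W_{3}'(a,0)=e^{\pi i/4}U'(-ia,0)$ (equivalently (\ref{22p})), and the gamma reflection formula (\ref{reflect}). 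I checked the exponential bookkeeping: the $e^{3\pi a/2}$ contributions do cancel and the surviving terms give $2\hat{N}_{W}'(a,0)=-2\sqrt{\pi}\,e^{\pi a/2}$, as you state. What each approach buys: yours is shorter and, as you note, entirely free of hypergeometric machinery, but it leans on (\ref{22t}), which this paper imports from an external reference without proof; the paper's route reuses the same apparatus already set up for $\hat{N}_{W}(a,0)$ in \cref{lem2.3,lem2.4}, so the two values are obtained uniformly. A pleasant by-product of your computation is that, combined with the paper's intermediate identity (\ref{14bb}), it furnishes an independent verification of the closed form (\ref{X1}) for $X(a)$.
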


\begin{proof}
From differentiating (\ref{03}) we obtain similarly to (\ref{09})
\begin{multline} \label{13aa}
\hat{N}_{W}'(a,0)
=ie^{\pi a/2}\left[
e^{-\pi i/4} U'(ia,0)
\int_{0}^{e^{-\pi i/4}\infty}
U\left(-ia,te^{\pi i/4}\right)dt
\right. \\ \left.
- e^{\pi i/4} U'(-ia,0)
\int_{0}^{e^{\pi i/4}\infty}
U\left(ia,te^{-\pi i/4}\right)dt
\right],
\end{multline}
and hence following the same steps leading to (\ref{10}) yields
\begin{equation} \label{13}
\hat{N}_{W}'(a,0)
=2e^{\pi a/2}\Re\left\{ 
U'(ia,0)\int_{0}^{\infty}U(-ia,t)dt
\right\}.
\end{equation}
Next from \cite[Eq. 12.2.7]{NIST:DLMF}
\begin{equation} \label{11a}
U'(ia,0)=-\frac{\sqrt{\pi}}{2^{\frac{1}{2}ia-\frac{1}{4}}\Gamma\left(\frac{1}{4}+\frac{1}{2}ia\right)},
\end{equation}
and so from (\ref{07}), (\ref{13}), and (\ref{11a}) we have
\begin{equation} \label{14}
 \hat{N}_{W}'(a,0)
 =-\sqrt{2}\pi e^{\pi a/2} \Re\left\{ 
 \frac{1}{\Gamma\left(\frac{1}{4}+\frac{1}{2}ia\right)}
\mathbf{F}\left(\tfrac{1}{2}, 1;
\tfrac{5}{4}-\tfrac{1}{2}ia;\tfrac{1}{2}
\right)
 \right\}.
\end{equation}
Now from (\ref{08}), (\ref{Y}), and (\ref{X}) and the gamma function recurrence relation we find that
\begin{multline} \label{14a}
 \frac{1}{\Gamma\left(\frac{1}{4}+\frac{1}{2}ia\right)}
\mathbf{F}\left(\tfrac{1}{2}, 1;
\tfrac{5}{4}-\tfrac{1}{2}ia;\tfrac{1}{2}
\right)    \\
=\frac{1}{\Gamma\left(\frac{1}{4}+\frac{1}{2}ia\right)
\Gamma\left(\frac{5}{4}-\frac{1}{2}ia\right)}
F\left(\tfrac{1}{2}, 1;
\tfrac{5}{4}-\tfrac{1}{2}ia;\tfrac{1}{2}
\right)  \\
=\frac{4}{\Gamma\left(\frac{1}{4}+\frac{1}{2}ia\right)
\Gamma\left(\frac{1}{4}-\frac{1}{2}ia\right)}
\frac{F\left(\tfrac{1}{2}, 1;
\tfrac{5}{4}-\tfrac{1}{2}ia;\tfrac{1}{2}\right)}
{1-2ia}=\frac{4\{X(a)+iY(a)\}}{\left|\Gamma\left(\frac{1}{4}+\frac{1}{2}ia\right)\right|^2}.
\end{multline}
Hence from (\ref{14}) we deduce that
\begin{equation} \label{14bb}
 \hat{N}_{W}'(a,0)
 =-\frac{4\sqrt{2}\pi e^{\pi a/2} X(a)}{\left|\Gamma\left(\frac{1}{4}
 +\frac{1}{2}ia\right)\right|^2},
\end{equation}
and consequently from (\ref{X1}) we arrive at (\ref{14b}), as asserted.
\end{proof}

Bringing everything together, we arrive at our desired numerically stable representations for the connection coefficients.
\begin{theorem} 
\label{thm2.7main}
Let $a \geq 0$. Then in (\ref{06}) we have
\begin{equation} \label{20}
c_{0}^{+}(a)=W(a,0)\left[
\sqrt {\pi} e^{\pi a/2}\left\{
1+\left(1+e^{-2\pi a}\right)^{-1/2}\right\}
-4Y(a)\left\{W'(a,0)\right\}^{2}
\right],
\end{equation}
\begin{multline} \label{21}
c_{0}^{-}(a)=-W(a,0) \bigg[
4Y(a)\left\{W'(a,0)\right\}^{2}   \\
+ \left. \sqrt {\pi} e^{-3\pi a/2}\left\{
1+e^{-2\pi a}+\left(1+e^{-2\pi a}\right)^{1/2}\right\}^{-1}
\right],
\end{multline}
and
\begin{equation} \label{20n}
c_{0}^{\pm}(-a)=W(a,0)\left[
4Y(a)\left\{W'(a,0)\right\}^{2}
\pm \sqrt {\pi} e^{-\pi a/2}\left\{
1 \pm \left(1+e^{2\pi a}\right)^{-1/2}\right\}
\right],
\end{equation}
where $W(a,0)$, $W'(a,0)$ and $Y(a)$ are given by (\ref{15}), (\ref{16}), and (\ref{Y}), respectively, and in the latter $F$ is the (unscaled) hypergeometric function defined by the rapidly converging series in (\ref{08}).
\end{theorem}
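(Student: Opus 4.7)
The plan is to substitute the evaluations from \cref{lem2.4,lem2.6} into the formula (\ref{17}) of \cref{lem2.2} and then collapse the resulting expressions by invoking the Legendre duplication and reflection identities for the gamma function. As an initial step I would write
\begin{equation*}
c_{0}^{\pm}(a)=\pm\sqrt{\pi}\,e^{\pi a/2}W(a,0)-2e^{\pi a}X(a)W'(a,0)+2Y(a)W'(a,0),
\end{equation*}
and then use the Wronskian relation $W(a,0)W'(a,0)=-\tfrac{1}{2}$ to recast the final term as $-4Y(a)W(a,0)\{W'(a,0)\}^{2}$, which is precisely the $Y$-piece present in both (\ref{20}) and (\ref{21}).

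The principal computation is the simplification of $-2e^{\pi a}X(a)W'(a,0)$. From (\ref{15}) and (\ref{16}) one obtains directly $W'(a,0)/W(a,0)=-\sqrt{2}\,|\Gamma(\tfrac{3}{4}+\tfrac{1}{2}ia)|/|\Gamma(\tfrac{1}{4}+\tfrac{1}{2}ia)|$, while Legendre's duplication formula applied at $2z=\tfrac{1}{2}+ia$ gives, after taking moduli, the clean identity $|\Gamma(\tfrac{1}{4}+\tfrac{1}{2}ia)|\,|\Gamma(\tfrac{3}{4}+\tfrac{1}{2}ia)|=\sqrt{2\pi}\,|\Gamma(\tfrac{1}{2}+ia)|$. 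Combined with the definition (\ref{X1}) of $X(a)$ and with $|\Gamma(\tfrac{1}{2}+ia)|^{2}=\pi/\cosh(\pi a)=2\pi e^{-\pi a}/(1+e^{-2\pi a})$, these yield
\begin{equation*}
\frac{X(a)W'(a,0)}{W(a,0)}=-\frac{\sqrt{\pi}\,e^{-\pi a/2}}{2(1+e^{-2\pi a})^{1/2}},\qquad\text{hence}\qquad -2e^{\pi a}X(a)W'(a,0)=\sqrt{\pi}\,e^{\pi a/2}W(a,0)(1+e^{-2\pi a})^{-1/2}.
\end{equation*}
Combined with $+\sqrt{\pi}\,e^{\pi a/2}W(a,0)$ this gives (\ref{20}) immediately. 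For the lower sign the bracket becomes $(1+e^{-2\pi a})^{-1/2}-1$, which I would rationalise by multiplying numerator and denominator by $1+(1+e^{-2\pi a})^{1/2}$ to produce $-e^{-2\pi a}/\{(1+e^{-2\pi a})+(1+e^{-2\pi a})^{1/2}\}$; multiplying through by $e^{\pi a/2}$ yields the $e^{-3\pi a/2}$ factor of (\ref{21}).

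For (\ref{20n}) I would exploit the symmetries $W(-a,0)=W(a,0)$, $W'(-a,0)=W'(a,0)$ and $X(-a)=X(a)$ (each quantity depending on $a$ only through a squared modulus of a gamma value), together with $Y(-a)=-Y(a)$, the last following from $F(\tfrac{1}{2},1;\tfrac{5}{4}+\tfrac{1}{2}ia;\tfrac{1}{2})=\overline{F(\tfrac{1}{2},1;\tfrac{5}{4}-\tfrac{1}{2}ia;\tfrac{1}{2})}$, a direct consequence of the power series in (\ref{08}) and the reality of the first two parameters. \cref{lem2.4,lem2.6} then give $\hat{N}_{W}(-a,0)=2e^{-\pi a}X(a)+2Y(a)$ and $\hat{N}_{W}'(-a,0)=-\sqrt{\pi}\,e^{-\pi a/2}$, so repeating the previous computation with $e^{\pi a}$ replaced by $e^{-\pi a}$, and using $(1+e^{2\pi a})^{-1/2}=e^{-\pi a}(1+e^{-2\pi a})^{-1/2}$ to rewrite the exponent, delivers (\ref{20n}) for both sign choices. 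The only delicate step is the gamma-function bookkeeping in the second paragraph that reduces $X(a)W'(a,0)/W(a,0)$ to an elementary expression; everything else is algebraic rearrangement.
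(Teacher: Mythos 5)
Your proposal is correct and follows essentially the same route as the paper: substitute \cref{lem2.4,lem2.6} into (\ref{17}), reduce the $X(a)W'(a,0)/W(a,0)$ term to an elementary expression via gamma-function identities (the paper uses the reflection formula directly to get (\ref{19}), whereas you reach the same evaluation $|\Gamma(\tfrac14+\tfrac12 ia)\Gamma(\tfrac34+\tfrac12 ia)|=2\pi e^{-\pi a/2}(1+e^{-2\pi a})^{-1/2}$ via Legendre duplication plus $|\Gamma(\tfrac12+ia)|^{2}=\pi/\cosh(\pi a)$), and rationalise to obtain (\ref{21}). For (\ref{20n}) the paper simply substitutes $-a$ into (\ref{20}) and (\ref{21}) using the parities $W(\pm a,0)$, $W'(\pm a,0)$ even and $Y(-a)=-Y(a)$, while you re-run the computation from the lemmas with those same symmetries; both are valid and all your intermediate identities check out.
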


\begin{proof}
From (\ref{15}), (\ref{16}), (\ref{12cc}), and (\ref{X1})
\begin{equation} \label{18}
\frac{\hat{N}_{W}(a,0)W'(a,0)}{W(a,0)}
=4Y(a)\left\{W'(a,0)\right\}^{2}
-\frac{e^{\pi a}}{2\sqrt{\pi}}
\left|\Gamma\left(\tfrac{3}{4}+
\tfrac{1}{2}ia\right)\Gamma\left(\tfrac{1}{4}
+\tfrac{1}{2}ia\right)\right|.
\end{equation}
On using (\ref{reflect}) we have
\begin{equation} \label{19}
\frac{e^{\pi a}}{2\sqrt{\pi}}
\left|\Gamma\left(\tfrac{3}{4}+
\tfrac{1}{2}ia\right)\Gamma\left(\tfrac{1}{4}
+\tfrac{1}{2}ia\right)\right|
=e^{\pi a/2}\left(
\frac{\pi}{1+e^{-2\pi a}}\right)^{1/2},
\end{equation}
and thus from (\ref{17}), (\ref{14b}), (\ref{18}), and (\ref{19}) we get (\ref{20}) and (\ref{21}).

Next, for $a$ replaced by $-a$ we have that (\ref{15}) and (\ref{16}) are unchanged, and from (\ref{Y}) $Y(-a)=-Y(a)$. Hence from (\ref{20}) and (\ref{21}) we arrive at (\ref{20n}).
\end{proof}

The following shows for large $a$ that $c_{0}^{+}(a)$ is exponentially large, whereas $c_{0}^{-}(a)$ and $c_{0}^{\pm}(-a)$ are $\mathcal{O}(a^{-3/4})$.
\begin{corollary}
As $a \rightarrow \infty$
\begin{equation} \label{22c}
c_{0}^{+}(a)=\sqrt{2\pi} a^{-1/4}e^{\pi a/2}
\left\{1+\mathcal{O}(a^{-2})\right\},
\end{equation}
\begin{equation} \label{22d}
c_{0}^{-}(a)=-2^{-1/2}a^{-3/4}
\left\{1+\mathcal{O}(a^{-2})\right\},
\end{equation}
and
\begin{equation} \label{22dd}
c_{0}^{\pm}(-a)=2^{-1/2}a^{-3/4}
\left\{1+\mathcal{O}(a^{-2})\right\}.
\end{equation}
\end{corollary}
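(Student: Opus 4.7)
The plan is to substitute the known large-$a$ expansions of the three building blocks---$W(a,0)$, $W'(a,0)$, and $Y(a)$---into the closed forms (\ref{20}), (\ref{21}) and (\ref{20n}), then identify which term dominates in each bracket and absorb everything else into $\mathcal{O}(a^{-2})$. Exponentially small quantities such as $e^{-3\pi a/2}$ or $(1+e^{\pm 2\pi a})^{-1/2}$ will be seen to contribute nothing at this order of accuracy.

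First I would extract the large-$a$ behaviour of $W(a,0)^2$ and $\{W'(a,0)\}^2$ from (\ref{15}), (\ref{16}) using Stirling (\ref{stirling}). Since Stirling applied to $|\Gamma(c+ia/2)|^{2}=\Gamma(c+ia/2)\Gamma(c-ia/2)$ gives $|\Gamma(c+ia/2)|^{2}\sim 2\pi (a/2)^{2c-1}e^{-\pi a/2}\{1+\mathcal{O}(a^{-2})\}$ (the half-integer shift produces only even-power corrections, which is why the relative error is $a^{-2}$ rather than $a^{-1}$), one obtains
\begin{equation*}
W(a,0)\sim 2^{-1/2}a^{-1/4}\bigl\{1+\mathcal{O}(a^{-2})\bigr\},\qquad
\{W'(a,0)\}^{2}\sim \tfrac{1}{2}a^{1/2}\bigl\{1+\mathcal{O}(a^{-2})\bigr\}.
\end{equation*}
Next, (\ref{Y1}) gives $Y(a)=\frac{1}{2a}+\mathcal{O}(a^{-3})$, so $4Y(a)\{W'(a,0)\}^{2}\sim a^{-1/2}\{1+\mathcal{O}(a^{-2})\}$ and $W(a,0)\cdot 4Y(a)\{W'(a,0)\}^{2}\sim 2^{-1/2}a^{-3/4}\{1+\mathcal{O}(a^{-2})\}$.

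With these in hand the three cases become short. For (\ref{22c}), I would note that in the bracket in (\ref{20}) the factor $\sqrt{\pi}e^{\pi a/2}\{1+(1+e^{-2\pi a})^{-1/2}\}=2\sqrt{\pi}e^{\pi a/2}\{1+\mathcal{O}(e^{-2\pi a})\}$ is exponentially larger than the algebraic term $4Y(a)\{W'(a,0)\}^{2}=\mathcal{O}(a^{-1/2})$; multiplying by $W(a,0)\sim 2^{-1/2}a^{-1/4}$ yields (\ref{22c}). For (\ref{22d}), the second term inside the bracket in (\ref{21}) is $\mathcal{O}(e^{-3\pi a/2})$ while the first is $\mathcal{O}(a^{-1/2})$, so only the first contributes, giving $c_0^{-}(a)\sim -W(a,0)\cdot 4Y(a)\{W'(a,0)\}^{2}\sim -2^{-1/2}a^{-3/4}$. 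For (\ref{22dd}), the factor $\pm\sqrt{\pi}e^{-\pi a/2}\{1\pm(1+e^{2\pi a})^{-1/2}\}$ is exponentially small, so once again the $Y$-term dominates and produces $+2^{-1/2}a^{-3/4}$ for both choices of sign.

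There is no real obstacle; the one point requiring a little care is the error bookkeeping. The relative error from Stirling, from the truncation of (\ref{Y1}) after the leading term, and from replacing the parenthetical expressions like $1+(1+e^{-2\pi a})^{-1/2}$ by their limits, must all be shown to be no worse than $\mathcal{O}(a^{-2})$. The first is $\mathcal{O}(a^{-2})$ because the corrections in $|\Gamma(c\pm ia/2)|^{2}$ enter in even powers of $1/a$ after combining the conjugate gammas; the second is $\mathcal{O}(a^{-2})$ because (\ref{Y1}) proceeds in odd powers, so $2aY(a)=1+\mathcal{O}(a^{-2})$; and the exponentially small corrections are absorbed trivially.
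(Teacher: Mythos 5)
Your proposal is correct and follows essentially the same route as the paper: derive the large-$a$ forms of $W(a,0)$ and $W'(a,0)$ from (\ref{15}), (\ref{16}) and Stirling's formula, combine with the leading term of (\ref{Y1}), and substitute into (\ref{20}), (\ref{21}), and (\ref{20n}), discarding the exponentially small contributions. The error bookkeeping you spell out (even-power corrections from the conjugate gamma pair, odd-power series for $Y(a)$) is exactly what justifies the $\mathcal{O}(a^{-2})$ relative error the paper asserts.
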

\begin{proof}
From (\ref{15}), (\ref{16}), and (\ref{stirling}) (or alternatively \cite[Eq. 5.11.12]{NIST:DLMF}) one can show that
\begin{equation} \label{22a}
W\left(a,0\right)=2^{-1/2}a^{-1/4}\left\{1
+\mathcal{O}(a^{-2})\right\},
\end{equation}
and
\begin{equation} \label{22b}
W'\left(a,0\right)=-2^{-1/2}a^{1/4}\left\{1
+\mathcal{O}(a^{-2})\right\},
\end{equation}
and hence using these along with (\ref{Y1}), (\ref{20}), (\ref{21}), and (\ref{20n}) we obtain the given approximations.
\end{proof}

\begin{remark}
From (\ref{05}) and (\ref{06}), and their differentiated forms, we obtain the interesting integrals
\begin{equation} \label{22eee}
\int_{0}^{\infty} W(a,\pm t)dt=\mp c_{0}^{\mp}(a),
\end{equation}
where $c_{0}^{\mp}(a)$ are given by \cref{thm2.7main}.

Also, from (\ref{15}), (\ref{17}), (\ref{14b}), and (\ref{22eee}), we note that
\begin{equation} \label{12b}
\int_{-\infty}^{\infty} W(a,t)dt
=-2\hat{N}_{W}'(a,0)W(a,0)
=2^{1/4}\sqrt {\pi} e^{\pi a/2}
\left|\frac{\Gamma\left(\tfrac{1}{4}+\tfrac{1}{2}ia\right)}
{\Gamma\left(\tfrac{3}{4}+
\tfrac{1}{2}ia\right)}\right|^{1/2}.
\end{equation}
\end{remark}

An interesting wrinkle in (\ref{06}) is that while it is numerically satisfactory for large $a$ and moderate to unbounded $x$, there is a large cancellation for large $a$ when $x$ is very small. This is on account of $N_{W}(a,x)$ and its first derivative vanishing at $x=0$, whereas the three terms on the RHS are exponentially large at $x=0$ for large $a$. To overcome this we wish to replace the three functions on the RHS of (\ref{06}) with alternative ones that mimic the behaviour $N_{W}(a,x)$ at $x=0$. To do so we note by setting $x=0$ in this equation and its derivative that
\begin{multline} \label{22u}
c_{0}^{+}(a)W(a,0)+c_{0}^{-}(a)W(a,0)-\hat{N}_{W}(a,0)
 \\
=c_{0}^{+}(a)W'(a,0)-c_{0}^{-}(a)W'(a,0)-\hat{N}_{W}'(a,0)=0.
\end{multline}
Thus our desired modification of (\ref{06}) is
\begin{multline} \label{22w}
N_{W}(a,x)=
c_{0}^{+}(a)\left\{W(a,x)-W(a,0)-W'(a,0) \,x\right\} \\
+c_{0}^{-}(a)\left\{W(a,-x)-W(a,0)+W'(a,0)\, x\right\} \\
-\left\{\hat{N}_{W}(a,x)-\hat{N}_{W}(a,0)-\hat{N}_{W}'(a,0)\, x\right\},
\end{multline}
which in conjunction with (\ref{15}), (\ref{16}), (\ref{12cc}), and (\ref{14b}) can be used if $a$ is large and $x$ is small. For all other values (\ref{06}) is stable.

We complete this section by obtaining a connection formula that can be used for complex argument $z$, and we only need to consider $\Re(z)\geq 0$ since $N_{W}(a,z)$ is even. Instead of $W(a,\pm x)$ we need the numerically satisfactory pair $W_{0}(a,z)$ and $W_{3}(a,z)$ as companions to $\hat{N}_{W}(a,z)$. Our result is as follows.

\begin{theorem} \label{thm2.9}
For $z \in \mathbb{C}$ let $W_{j}(a,z)$ ($j=0,3$) be defined by (\ref{02b}), and $\hat{N}_{W}(a,z)$ be given by (\ref{04}). For $x=z$ let $N_{W}(a,z)$ be given by (\ref{01}) and (\ref{22e}). Then these functions are related by 
\begin{equation} \label{22k}
N_{W}(a,z)=
c_{0}^{(0)}(a)W_{0}(a,z)+c_{0}^{(3)}(a)W_{3}(a,z)
-\hat{N}_{W}(a,z),
\end{equation}
where
\begin{equation} \label{22r}
c_{0}^{(0)}(a)=\frac {
\sqrt {\pi}2^{\frac{5}{4}+\frac{1}{2}ia}
e^{ \frac{1}{2}\pi a+\frac{3}{4}\pi i}Y(a)}
{\Gamma\left( \frac{1}{4}
-\frac{1}{2}ia \right) }
+2^{-\frac{5}{4}+\frac{1}{2}ia}
e^{ \frac{1}{2}\pi a+\frac{1}{4}\pi i}
\Gamma\left( \tfrac{1}{4}+\tfrac{1}{2}ia \right),
\end{equation}
$c_{0}^{(3)}(a)=\overline{c_{0}^{(0)}(a)}$, and $Y(a)$ is defined by (\ref{Y}).
\end{theorem}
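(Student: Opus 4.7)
The strategy exploits the fact that $N_{W}(a,z)$ and $\hat{N}_{W}(a,z)$ are particular solutions of the inhomogeneous Weber equation $y''+(z^{2}/4-a)y=f$ with constant forcing $f=-1$ and $f=+1$ respectively (the sign for $\hat{N}_{W}$ follows from the remark after (\ref{05}) that $-\hat{N}_{W}$ is a particular solution of (\ref{02})). Their sum is therefore an entire solution of the homogeneous equation (\ref{02a}), and since $\{W_{0}(a,z),W_{3}(a,z)\}$ is a fundamental pair we can write
\[
N_{W}(a,z)+\hat{N}_{W}(a,z)=c_{0}^{(0)}(a)W_{0}(a,z)+c_{0}^{(3)}(a)W_{3}(a,z)
\]
for unique coefficients $c_{0}^{(0)}(a)$, $c_{0}^{(3)}(a)$; rearranging yields the claimed formula (\ref{22k}). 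To pin the coefficients down, I would evaluate this identity and its derivative at $z=0$ and use $N_{W}(a,0)=N_{W}'(a,0)=0$, obtaining a $2\times 2$ linear system with right-hand sides $\hat{N}_{W}(a,0)$ and $\hat{N}_{W}'(a,0)$.

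The determinant of this system is the Wronskian $\mathcal{W}(a):=W_{0}(a,z)W_{3}'(a,z)-W_{0}'(a,z)W_{3}(a,z)$, which is constant in $z$ and evaluates to $-ie^{-\pi a/2}$ by reading off the leading asymptotics (\ref{02c}). Cramer's rule then gives
\[
c_{0}^{(0)}(a)=ie^{\pi a/2}\bigl[\hat{N}_{W}(a,0)W_{3}'(a,0)-\hat{N}_{W}'(a,0)W_{3}(a,0)\bigr],
\]
together with the analogous expression for $c_{0}^{(3)}(a)$ obtained by interchanging $W_{0}$ and $W_{3}$. Since (\ref{02b}) shows that $W_{3}(a,0)=U(-ia,0)$ and $W_{3}'(a,0)=e^{i\pi/4}U'(-ia,0)$ are the complex conjugates of $W_{0}(a,0)$ and $W_{0}'(a,0)$ for real $a$, and because $\hat{N}_{W}(a,0)$, $\hat{N}_{W}'(a,0)$ are real, conjugation of the Cramer formula yields $c_{0}^{(3)}(a)=\overline{c_{0}^{(0)}(a)}$ at once.

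The remaining task is to reduce $c_{0}^{(0)}(a)$ to the form (\ref{22r}). I would insert \cref{lem2.6} ($\hat{N}_{W}'(a,0)=-\sqrt{\pi}e^{\pi a/2}$), \cref{lem2.4} ($\hat{N}_{W}(a,0)=2e^{\pi a}X(a)-2Y(a)$), and the explicit values
\[
W_{3}(a,0)=\frac{\sqrt{\pi}\,2^{ia/2-1/4}}{\Gamma(3/4-ia/2)},\qquad W_{3}'(a,0)=-\frac{\sqrt{\pi}\,2^{ia/2+1/4}e^{i\pi/4}}{\Gamma(1/4-ia/2)}
\]
obtained from (\ref{11}) and (\ref{11a}) with $a\to -a$, together with the derivative factor $e^{i\pi/4}$. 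The $Y(a)$ contribution collapses directly onto the first term of (\ref{22r}) via $-ie^{i\pi/4}=e^{-i\pi/4}$ and a sign flip yielding $e^{3i\pi/4}$. The $X(a)$ contribution is handled with (\ref{X1}) in the form $X(a)=\Gamma(1/4+ia/2)\Gamma(1/4-ia/2)/(4\sqrt{2\pi})$, so the $\Gamma(1/4-ia/2)$ in the denominator cancels and $\Gamma(1/4+ia/2)$ emerges; the $\hat{N}_{W}'(a,0)$ contribution is rewritten via the reflection identity $\Gamma(3/4-ia/2)\Gamma(1/4+ia/2)=\pi/\sin(3\pi/4-i\pi a/2)$, so it also becomes a multiple of $\Gamma(1/4+ia/2)$. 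Expanding the sine into exponentials of $\pm\pi a/2$, the main obstacle is that each of these two remaining contributions separately carries a spurious $e^{3\pi a/2}$ piece; these cancel exactly because of the identity $e^{-i\pi/4}+e^{3i\pi/4}=0$, leaving precisely $2^{-5/4+ia/2}e^{\pi a/2+i\pi/4}\Gamma(1/4+ia/2)$ as required.
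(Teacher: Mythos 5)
Your proposal is correct and follows essentially the same route as the paper: observe that $N_{W}+\hat{N}_{W}$ solves the homogeneous equation, determine the coefficients from the conditions $N_{W}(a,0)=N_{W}'(a,0)=0$ via the Wronskian $\mathscr{W}\{W_{0},W_{3}\}=-ie^{-\pi a/2}$ (which the paper takes from \cite[Eq. 12.2.12]{NIST:DLMF} rather than from the asymptotics (\ref{02c}), though both give the same value), and then simplify using \cref{lem2.4,lem2.6}, the values (\ref{22n})--(\ref{22p}), and the reflection formula. Your explicit tracking of the cancellation of the $e^{3\pi a/2}$ terms between the $X(a)$ and $\hat{N}_{W}'(a,0)$ contributions is a correct and slightly more detailed account of the final reduction to (\ref{22r}).
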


\begin{proof}
Firstly from (\ref{22k}) we see that $c_{0}^{(3)}(a)=\overline{c_{0}^{(0)}(a)}$, since when $z=x \in \mathbb{R}$ both $N_{W}(a,x)$ and $\hat{N}_{W}(a,x)$ are real, and $W_{0}(a,x)=\overline{W_{3}(a,x)}$. Next, on recalling that $N_{W}(a,0)=N_{W}'(a,0)=0$, we have from (\ref{22k}) and its derivative
\begin{equation} \label{22l}
c_{0}^{(0)}(a)=
\frac{\hat{N}_{W}(a,0)W_{3}'(a,0)
-\hat{N}_{W}'(a,0)W_{3}(a,0)}
{\mathscr{W}\left\{W_{0},W_{3}\right\}(a,0)}.
\end{equation}

Now from (\ref{02b}) and \cite[Eq. 12.2.12]{NIST:DLMF} the required Wronskian is given by
\begin{equation} \label{22m}
\mathscr{W}\left\{W_{0}(a,z),W_{3}(a,z)\right\}
=-ie^{-\pi a/2},
\end{equation}
and from (\ref{02b}) and \cite[Eqs. 12.2.6, 12.2.7]{NIST:DLMF} we have
\begin{equation} \label{22n}
W_{3}(a,0)=\overline{W_{0}(a,0)}=
\frac{\sqrt{\pi}}{2^{\frac{1}{4}-\frac{1}{2}ia} \Gamma\left( \frac{3}{4}-\frac{1}{2}ia \right)},
\end{equation}
and
\begin{equation} \label{22p}
W_{3}'(a,0)=\overline{W_{0}'(a,0)}=
\frac{\sqrt{\pi}e^{-3\pi i/4}2^{\frac{1}{4}+\frac{1}{2}ia}}{ \Gamma\left( \frac{1}{4}-\frac{1}{2}ia \right)}.
\end{equation}
Therefore using \cref{lem2.4,lem2.6}, (\ref{22l}) - (\ref{22p}), along with the gamma function reflection formula (\ref{reflect}), we arrive at (\ref{22r}). 
\end{proof}

In (\ref{22r}) the first term dominates when $a$ is large since it is exponentially large in comparison to the second term, and from (\ref{stirling}) and (\ref{Y1}) we then get
\begin{equation} \label{22s}
c_{0}^{(0)}(a)=
2^{-1/2} a^{-\frac{3}{4}+\frac{1}{2}ia}
e^{\frac{3}{4}\pi a + \frac{5}{8}\pi i - \frac{1}{2}ia}
\left\{1+\mathcal{O}(a^{-1})\right\},
\end{equation}
with the corresponding approximation for $c_{0}^{(3)}(a)$ being the complex conjugate of this.

If $a$ is large and $|z|$ is small then similarly to (\ref{22w}) one should use in place of (\ref{22k}) the alternative form
\begin{multline} \label{22v}
N_{W}(a,z)=
c_{0}^{(0)}(a)\left\{W_{0}(a,z)-W_{0}(a,0)-W_{0}'(a,0)z\right\} \\
+c_{0}^{(3)}(a)\left\{W_{3}(a,z)-W_{3}(a,0)-W_{3}'(a,0)z\right\} \\
-\left\{\hat{N}_{W}(a,z)-\hat{N}_{W}(a,0)-\hat{N}_{W}'(a,0)z\right\}.
\end{multline}

\section{Uniform asymptotic expansions} 
\label{sec3}
Here we record asymptotic expansions for the Nield-Kuznetsov functions for large $a$ that are uniformly valid for all real and complex values of the argument $z$. We extract the required results from \cite[Sect. 4.1]{Dunster:2021:UAP}, and refer the reader there for proofs of the foregoing results. We first must define terms that are used, and the notation here differs slightly from the above reference.

It is convenient to write $a=\tfrac{1}{2} u$ and rescale the argument from $z$ to $\sqrt{2u} \, z$. Thus we shall focus primarily on the function $W_{R}^{(0,3)}\left(\tfrac{1}{2}u,\sqrt{2u} \, z\right)$. Although we have so far considered $R=0$ (which is the complementary Nield-Kuznetsov function) we also shall include $R=1$, since both of these will be used in the proceeding section on Laplace transforms.

Firstly, with $z$ rescaled as above, let
\begin{equation} \label{23}
\beta=\frac{z}{\sqrt{1-z^{2}}},
\end{equation}
where the branch of the square root is positive for $-1<z<1$ and is continuous in the plane having a cuts along $(-\infty,-1]$ and $[1,\infty)$. Note $\beta \rightarrow \pm i$ as $z \rightarrow \infty$ in the upper and lower half-planes, respectively.

Next define
\begin{equation} \label{zeta1}
\xi=\tfrac{2}{3}\zeta^{3/2}
=\tfrac{1}{2}\arccos(z)-\tfrac{1}{2}z\sqrt{1-z^{2}}.
\end{equation}
The branch in (\ref{zeta1}) is chosen so that $\xi \geq 0$ and $\zeta \geq 0$ for $-1\leq z \leq 1$, and by continuity elsewhere in the $z$ plane having a cut along $(-\infty, -1]$ (as well as $[1,\infty)$ for $\xi$). Hence $\zeta \leq 0$ for $1\leq z < \infty$, and is given by
\begin{equation} \label{zeta2}
\tfrac{2}{3}(-\zeta)^{3/2}=\tfrac{1}{2}z\sqrt {z^{2}-1}-\tfrac{1}{2}\ln\left( z+\sqrt {z^{2}-1} \right).
\end{equation}
We remark that $\zeta$ (unlike $\xi$) is an analytic function of $z$ for $\Re(z) \geq 0$, in particular at $z=1$.

We now define an easily computed sequence of polynomials via
\begin{equation} \label{24}
\mathrm{E}_{1}(\beta)=\tfrac{1}{24}\beta
\left(5\beta^{2}+6\right),
\end{equation}
\begin{equation} \label{25}
\mathrm{E}_{2}(\beta)=
\tfrac{1}{16}\left(\beta^{2}+1\right)^{2} 
\left(5\beta^{2}+2\right),
\end{equation}
and for $s=2,3,4\ldots$
\begin{equation} \label{26}
\mathrm{E}_{s+1}(\beta) =
\frac{1}{2} \left(\beta^{2}+1 \right)^{2}\mathrm{E}_{s}^{\prime}(\beta)
+\frac{1}{2}\int_{i\sigma(s)}^{\beta}
\left(p^{2}+1 \right)^{2}
\sum\limits_{j=1}^{s-1}
\mathrm{E}_{j}^{\prime}(p)
\mathrm{E}_{s-j}^{\prime}(p) dp.
\end{equation}
Here $\sigma(s)=1$ for $s$ odd and $\sigma(s)=0$ for $s$ even, so that the even and odd coefficients are even and odd functions of $\beta$, respectively, with $\mathrm{E}_{2s}(\pm i)=0$ ($s=1,2,3,\ldots$). 

We further define two sequences $\left\{a_{s}\right\}_{s=1}^{\infty}$ and $\left\{\tilde{a}_{s}\right\}_{s=1}^{\infty}$ by $a_{1}=a_{2}=\frac{5}{72}$, $\tilde{a}_{1}=\tilde{a}_{2}=-\frac{7}{72}$, with subsequent terms $a_{s}$ and $\tilde{{a}}_{s}$ ($s=2,3,\ldots $) satisfying the same recursion formula
\begin{equation} \label{27}
b_{s+1}=\frac{1}{2}\left(s+1\right) b_{s}+\frac{1}{2}
\sum\limits_{j=1}^{s-1}{b_{j}b_{s-j}}.
\end{equation}

With the above definitions we then introduce a sequence of coefficient functions
\begin{equation} \label{28}
\mathcal{E}_{s}(z) =\mathrm{E}_{s}(\beta) +
(-1)^{s}a_{s}s^{-1}\xi^{-s},
\end{equation}
and
\begin{equation} \label{29}
\tilde{\mathcal{E}}_{s}(z) =\mathrm{E}_{s}(\beta)
+(-1)^{s}\tilde{a}_{s}s^{-1}\xi^{-s}.
\end{equation}

Next $\mathcal{A}(u,z)$ and $\mathcal{B}(u,z)$ are two coefficient functions which are analytic in the cut plane $\mathbb{C} \setminus (-\infty,-1]$, and as $u \rightarrow \infty$, uniformly for $z$ bounded away by a distance $\delta$ from this cut, they possess the asymptotic expansions
\begin{equation} \label{31}
\mathcal{A}(u,z) \sim \phi(z)\exp \left\{ \sum\limits_{s=1}^{\infty}\frac{
\tilde{\mathcal{E}}_{2s}(z) }{u^{2s}}\right\} \cosh \left\{ \sum\limits_{s=0}^{\infty}\frac{\tilde{\mathcal{E}}_{2s+1}(z) }{u^{2s+1}}\right\},
\end{equation}
and
\begin{equation} \label{32}
\mathcal{B}(u,z) \sim \frac{\phi(z)}{u^{1/3}\sqrt{\zeta}}\exp \left\{ \sum\limits_{s=1}^{\infty}\frac{
\mathcal{E}_{2s}(z) }{u^{2s}}\right\} \sinh \left\{ \sum\limits_{s=0}^{\infty}\frac{\mathcal{E}_{2s+1}(z) }{u^{2s+1}}\right\},
\end{equation}
where
\begin{equation} \label{phi}
\phi(z)=\left(\frac{\zeta}{1-z^2}\right)^{1/4}.
\end{equation}
Note that $\phi(z)$ has a removable singularity at $z=1$ since $\zeta$, as defined by (\ref{zeta1}), has a simple zero at this point. We also remark that $z=1$ is a turning point from the differential equation from which these expansions were derived.

The coefficients $\mathcal{E}_{2s}(z)$ and $\tilde{\mathcal{E}}_{2s}(z)$ are not analytic at  $z=1$. Thus these expansions (or rather a truncated series in (\ref{31}) and (\ref{32})) cannot be used directly near this singularity. However both $\mathcal{A}(u,z)$ and $\mathcal{B}(u,z)$ are themselves analytic at this point, and there are two ways to compute them near and at this point, described next.

If many terms are required for high accuracy, Cauchy's integral formula can be used, as given in \cite[Eqs. (19) and (20)]{Dunster:2020:ASI} and \cite[Thm. 4.2]{Dunster:2021:SEB}. Basically the expansions for $\mathcal{A}(u,z)$ and $\mathcal{B}(u,z)$ are inserting in the Cauchy integral representation of these functions, and being valid on the contour of integration that encloses $z=1$ this allows accurate and stable computation of each function.

If only a few terms are required we can expand these functions into a traditional asymptotic expansion involving inverse powers of $u^2$. Specifically, for $\mathcal{A}(u,z)$ we find from (\ref{31}) as $u \rightarrow \infty$
\begin{equation} \label{A1}
\mathcal{A}(u,z) \sim
\phi(z) \sum\limits_{s=0}^{\infty}\frac{\mathrm{A}_{s}(z) }{u^{2s}},
\end{equation}
where $\mathrm{A}_{0}(z)=1$,
\begin{equation} \label{A3}
\mathrm{A}_{1}(z)
=\tfrac{1}{2}\left\{\tilde{\mathcal{E}}_{1}^{2}(z)
+2\tilde{\mathcal{E}}_{2}(z)\right\},
\end{equation}
\begin{equation} \label{A4}
\mathrm{A}_{2}(z)
=\tfrac{1}{24}\left\{\tilde{\mathcal{E}}_{1}^{4}(z)
+12\tilde{\mathcal{E}}_{1}^{2}(z)\tilde{\mathcal{E}}_{2}(z)
+24\tilde{\mathcal{E}}_{1}(z)\tilde{\mathcal{E}}_{3}(z)
+12\tilde{\mathcal{E}}_{2}^{2}(z)
+24\tilde{\mathcal{E}}_{4}(z)\right\},
\end{equation}
and so on. 

Similarly from (\ref{32}) we have
\begin{equation} \label{B1}
\mathcal{B}(u,z) \sim
\frac{\phi(z)}{u^{4/3}}
\sum\limits_{s=0}^{\infty}\frac{\mathrm{B}_{s}(z) }{u^{2s}},
\end{equation}
with the first two terms being
\begin{equation} \label{B2}
\mathrm{B}_{0}(z)=
\zeta^{-1/2}\mathcal{E}_{1}(z),
\end{equation}
and
\begin{equation} \label{B3}
\mathrm{B}_{1}(z)=
\zeta^{-1/2}
\left\{\tfrac{1}{6}\mathcal{E}_{1}^{3}(z)
+\mathcal{E}_{1}(z)\mathcal{E}_{2}(z)
+\mathcal{E}_{3}(z)
\right\}.
\end{equation}

From the general turning point theory of differential equations \cite[Chap. 11]{Olver:1997:ASF} we know that these coefficients must be defined and analytic at $z=1$, even though $\tilde{\mathcal{E}}_{s}(z)$ are not. The singularities at $z=1$ cancel out in each of $\mathrm{A}_{s}(z)$ and $\mathrm{B}_{s}(z)$, rendering them analytic at the turning point. Thus a Taylor series can be employed very close to the removable singularity. For example, as $z \rightarrow 1$ we find with the aid of Maple\footnote{Maple 2020. Maplesoft, a division of Waterloo Maple Inc., Waterloo, Ontario} from (\ref{23}) - (\ref{29}), (\ref{A3}), (\ref{A4}), (\ref{B2}), and (\ref{B3})
\begin{equation} \label{A5}
\mathrm{A}_{1}(z)=
\frac{7}{900}-\frac {6849}{616000}(z-1)
+\mathcal{O}\left\{(z-1)^2\right\},
\end{equation}
\begin{equation} \label{A6}
\mathrm{A}_{2}(z)=
\frac{72846269}{13970880000}-\frac {1740991}{149600000}(z-1)
+\mathcal{O}\left\{(z-1)^2\right\},
\end{equation}
\begin{equation} \label{B4}
\mathrm{B}_{0}(z)=
-\frac{9}{280}2^{1/3}+\frac {7}{450}2^{1/3}(z-1)
+\mathcal{O}\left\{(z-1)^2\right\},
\end{equation}
and
\begin{equation} \label{B5}
\mathrm{B}_{1}(z)=
-\frac{3013}{260000}2^{1/3}+\frac {5549641}{517440000}2^{1/3}(z-1)
+\mathcal{O}\left\{(z-1)^2\right\}.
\end{equation}

In a more general setting, analyticity of asymptotic coefficients in this and similar situations follows from the following result, and we will use this later.

\begin{theorem} \label{thm:nopoles}
Let $0<\rho_{1}<\rho_{2}$, $u>0$, $z_{0}\in \mathbb{C}$, $H(u,z)$ be an analytic function of $z$ in the open disk $D=\{z:\, |z-z_{0}|<\rho_{2}\}$, and $h_{s}(z)$ ($s=0,1,2,\ldots$) be a sequence of functions that are analytic in $D$ except possibly for an isolated singularity at $z=z_{0}$. If $H(u,z)$ is known to possess the asymptotic expansion
\begin{equation} \label{301}
H(u,z) \sim \sum\limits_{s=0}^{\infty}\frac{h_{s}(z)}{u^{s}}
\quad (u \rightarrow \infty),
\end{equation}
in the annulus $\rho_{1}<|z-z_{0}|<\rho_{2}$, then $z_{0}$ is an ordinary point or a removable singularity for each $h_{s}(z)$, and the expansion (\ref{301}) actually holds for all $z \in D$ (with $h_{s}(z_{0})$ defined by the limit of this function at $z_{0}$ if it is a removable singularity).
\end{theorem}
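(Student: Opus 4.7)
The plan is to proceed by induction on $N$, establishing for each $N \geq 0$ that $h_N$ extends analytically to $D$ and that the partial sum satisfies $H(u,z) - \sum_{s=0}^{N} h_s(z)/u^s = \mathcal{O}(u^{-(N+1)})$ uniformly on compact subsets of $D$ as $u \to \infty$. Assume as inductive hypothesis that $h_0,\ldots,h_{N-1}$ have been extended analytically to all of $D$, and define
\[
G_N(u,z) = u^N \left[H(u,z) - \sum_{s=0}^{N-1} \frac{h_s(z)}{u^s}\right],
\]
which is analytic in $z \in D$ for each fixed $u>0$, since $H(u,\cdot)$ and each $h_s$ are. On any compact subset of the annulus $\rho_1 < |z-z_0| < \rho_2$, the given expansion (\ref{301}) yields $G_N(u,z) = h_N(z) + \mathcal{O}(1/u)$ as $u \to \infty$, so the family $\{G_N(u,\cdot)\}_{u \geq u_0}$ is uniformly bounded on such compacta for some $u_0$.

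The first step is to transfer this boundedness across the inner hole of the annulus. Fix any $r$ with $\rho_1 < r < \rho_2$. Since the circle $|z-z_0|=r$ lies inside the annulus and $G_N(u,\cdot)$ is analytic on the closed disk $\overline{D}_r = \{z: |z-z_0|\leq r\}$, the maximum modulus principle gives
\[
\sup_{|z-z_0|\leq r} |G_N(u,z)| \;=\; \sup_{|z-z_0|=r} |G_N(u,z)| \;\leq\; M
\]
for all $u \geq u_0$, where $M$ is a finite constant independent of $u$. Letting $u \to \infty$ along the annular region $\rho_1 < |z-z_0| \leq r$ then yields $|h_N(z)| \leq M$ there. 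Since $h_N$ is analytic on the annulus and bounded in a deleted neighborhood of $z_0$, Riemann's removable singularity theorem extends $h_N$ to an analytic function on all of $D$, which gives the first part of the conclusion.

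With $h_N$ extended, the difference $G_N(u,z) - h_N(z)$ is analytic on $D$. By applying the hypothesis (\ref{301}) at the next order on the compact circle $|z-z_0|=r$, we obtain $G_N(u,z) - h_N(z) = \mathcal{O}(1/u)$ uniformly on that circle. A second application of the maximum modulus principle to this analytic difference on $\overline{D}_r$ propagates the bound $\mathcal{O}(1/u)$ to the entire closed disk, so that
\[
H(u,z) - \sum_{s=0}^{N} \frac{h_s(z)}{u^s} \;=\; \frac{G_N(u,z) - h_N(z)}{u^N} \;=\; \mathcal{O}\bigl(u^{-(N+1)}\bigr)
\]
uniformly on $\overline{D}_r$. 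Combined with the original bound on the annulus and covering arguments, this yields the estimate on any compact subset of $D$ and completes the induction.

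The main obstacle is that the hypothesis (\ref{301}) is completely silent at $z=z_0$, so uniform control in $u$ on a disk containing $z_0$ must be manufactured purely from information available on the annulus. The maximum modulus principle, applied twice (first to $G_N(u,\cdot)$ to secure boundedness and removability, then to $G_N(u,\cdot)-h_N$ to promote the remainder estimate), is the decisive tool that performs this transfer; once it is in place, Riemann's theorem and a clean induction handle the remainder of the argument.
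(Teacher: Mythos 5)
Your overall architecture (induction on the order $N$, the normalized remainder $G_N(u,z)=u^N[H(u,z)-\sum_{s=0}^{N-1}h_s(z)u^{-s}]$, and two applications of the maximum modulus principle) is genuinely different from the paper's proof, which instead shows that every moment $\oint_\Gamma (z-z_0)^r H(u,z)\,dz$ vanishes by Cauchy--Goursat, matches asymptotic coefficients to conclude $\oint_\Gamma (z-z_0)^r h_s(z)\,dz=0$ for all $r\geq 0$, kills the negative Laurent coefficients of each $h_s$ via the residue theorem, and then extends the expansion into the disk with Cauchy's integral formula. Your second maximum-modulus step (propagating $G_N(u,z)-h_N(z)=\mathcal{O}(1/u)$ from the circle $|z-z_0|=r$ to the closed disk) is correct and is a clean substitute for the paper's final Cauchy-formula step.

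However, there is a genuine gap at the removability step. From $|G_N(u,z)|\leq M$ on $\overline{D}_r$ you may let $u\to\infty$ only where the hypothesis (\ref{301}) identifies the limit of $G_N(u,z)$ with $h_N(z)$, namely on $\rho_1<|z-z_0|\leq r$. This gives $|h_N|\leq M$ on that outer annulus only, which says nothing about the behavior of $h_N$ on $0<|z-z_0|\leq\rho_1$: the function $1/(z-z_0)$ is bounded on every such outer annulus yet has a pole at $z_0$. Your sentence asserting that $h_N$ is ``bounded in a deleted neighborhood of $z_0$'' therefore does not follow from what precedes it, and Riemann's theorem cannot be invoked as written. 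The gap is repairable within your framework, but it needs an extra idea: since $\{G_N(u,\cdot)\}_{u\geq u_0}$ is uniformly bounded and analytic on $D_r$ and converges to $h_N$ on the annulus (a set with limit points in $D_r$), Vitali's or Montel's theorem gives locally uniform convergence on all of $D_r$ to an analytic limit $g$; the identity theorem on the connected punctured disk $0<|z-z_0|<r$ then forces $g=h_N$ there, so $h_N$ extends analytically through $z_0$. Alternatively, one can compute the negative Laurent coefficients of $h_N$ as limits of $\oint_{|z-z_0|=r}(z-z_0)^{m-1}G_N(u,z)\,dz=0$, which is essentially the paper's argument transplanted into your induction. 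Without one of these additions the proof is incomplete.
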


\begin{proof}
To establish both assertions we shall use certain Cauchy integrals. Firstly, since $z_{0}$ is an isolated singularity of $h_{s}(z)$ for each $s$ it is expressible as a Laurent series at this point, of the form
\begin{equation} \label{304}
h_{s}(z) = \sum\limits_{j=-\infty}^{\infty}
a_{s,j}\left(z-z_{0}\right)^{j} \quad (0<|z-z_{0}|<\rho_{2}),
\end{equation}
for some coefficients $a_{s,j}$.

Now choose any $\rho'$ such that $\rho_{1}<\rho'<\rho_{2}$, and let $\Gamma$ be a positively orientated circle centered at $z=z_{0}$ of radius $\rho'$. By hypothesis (\ref{301}) certainly holds for all points on this contour, and since $\Gamma\in D$ we have for arbitrary positive integer $n$ and nonnegative integer $r$, from the Cauchy-Goursat theorem and analyticity of $H(u,z)$
\begin{equation} \label{302}
0= \oint_{\Gamma} \left(z-z_{0}\right)^{r} H(u,z) dz
= \sum\limits_{s=0}^{n-1} 
\frac{h_{r,s}}{u^{s}}
+\mathcal{O}\left(\frac{1}{u^{n}}\right),
\end{equation}
where
\begin{equation} \label{303}
h_{r,s} = \oint_{\Gamma} \left(z-z_{0}\right)^{r} h_{s}(z) dz.
\end{equation}
Now by uniqueness of asymptotic series it follows from (\ref{302}) for each $s$ that $h_{r,s} =0$ for all nonnegative $r$. But from (\ref{304}), (\ref{303}) and the residue theorem we have $h_{r,s} = 2 \pi i a_{s,-r-1}$ ($r=0,1,2,\ldots$), and hence all negative powers of $(z-z_{0})$ in the Laurent expansion (\ref{304}) vanish. Thus each function $h_{s}(z)$ is either analytic at $z=z_{0}$, or has a removable singularity there, as asserted; if the latter, define $h_{s}(z_{0})=a_{s,0}$. 

Having established each $h_{s}(z)$ is analytic in $D$ we can now employ Cauchy's integral formula, and for any point $z$ interior to $\Gamma$, again using (\ref{301}) for points on the contour, we have
\begin{multline} \label{305}
H(u,z)=\frac{1}{2\pi i}
\oint_{\Gamma}
\frac{H(u,t) }{t-z} dt 
= \sum\limits_{s=0}^{n-1}\frac{1}{2 \pi i u^{s}} 
\oint_{\Gamma}\frac{h_{s}(t)}{t-z} dt
+\mathcal{O}\left(\frac{1}{u^{n}}\right) \\
=\sum\limits_{s=0}^{n-1} 
\frac{h_{s}(z)}{u^{s}}
+\mathcal{O}\left(\frac{1}{u^{n}}\right).
\end{multline}
This verifies that (\ref{301}) holds for all $z \in D$. 
\end{proof}

Next, there are some constants and other coefficients we require. Firstly, we have
\begin{equation} \label{34}
\gamma_{0}(u)=
2^{-\frac{3}{2}+\frac{1}{4}iu}\pi^{-1/2}
u^{-13/12}e^{\frac{1}{8}\pi u+i(\chi(u)+\frac{1}{8}\pi )}
\Gamma\left(\tfrac{1}{4}+\tfrac{1}{4}iu \right),
\end{equation}
and
\begin{equation} \label{34.1}
\gamma_{1}(u)=
2^{-\frac{1}{2}+\frac{1}{4}iu}\pi^{-1/2}
u^{-19/12}e^{\frac{1}{8}\pi u+i(\chi(u)-\frac{1}{8}\pi )}
\Gamma\left(\tfrac{3}{4}+\tfrac{1}{4}iu \right).
\end{equation}
In these $\chi(u)$ is another constant that is defined by a certain asymptotic series and error term (which has an explicit bound). For brevity we omit details of the error bound here, and instead note that this constant possesses the following asymptotic expansion as $u \rightarrow \infty$

\begin{equation} \label{41}
\chi(u) \sim
\frac{u}{4}\ln\left(\frac{2e}{u}\right)
+ \frac{1}{u} \sum\limits_{s=0}^{\infty}
\frac{\mathrm{e}_{s}}{u^{2s}},
\end{equation}
where $\mathrm{e}_{s}$ are the coefficients in the following relation (which can be derived using (\ref{stirling}))
\begin{equation} \label{42}
\frac{1}{2}
\Im\left\{ \ln\left(\Gamma\left(\tfrac{1}{2}-\tfrac{1}{2}iu \right) \right)\right\} \sim
\frac{u}{4}\ln\left(\frac{2e}{u}\right)
+ \frac{1}{u} \sum\limits_{s=0}^{\infty}
\frac{\mathrm{e}_{s}}{u^{2s}}.
\end{equation}
The first two coefficients are $\mathrm{e}_{0}=-\frac{1}{24}$ and $\mathrm{e}_{1}=-\frac{7}{720}$.

Incidentally, one can show from (\ref{41}) and (\ref{42}) that for large $u$ and arbitrary positive integer $n$
\begin{equation} \label{92}
e^{i\chi(u)}
=\left\{\frac{\Gamma\left(\tfrac{1}{2}-\tfrac{1}{2}iu \right)}{\Gamma\left(\tfrac{1}{2}+\tfrac{1}{2}iu \right)}\right\}^{1/4}\left\{1+\mathcal{O}\left(\frac{1}{u^{n}}\right)\right\},
\end{equation}
and we shall use this later.

We note in passing that from (\ref{stirling}), (\ref{34}) - (\ref{42}) one finds
\begin{equation}
\label{eq34a}
\gamma_{0}(u)=
\frac {1}{\sqrt{2}u^{4/3}}
\left\{1+\frac {1}{8{u}^{2}}+\frac {41}{128{u}^{4}}
+\mathcal{O}\left(\frac {1}{{u}^6}\right)\right\},
\end{equation}
and
\begin{equation}
\label{eq34d}
\gamma_{1}(u)=
\frac {1}{\sqrt{2}u^{4/3}}
\left\{1-\frac {1}{8{u}^{2}}-\frac {39}{128{u}^{4}}
+\mathcal{O}\left(\frac {1}{{u}^6}\right)\right\},
\end{equation}
as $u \rightarrow \infty$

Next for $R=0,1$ let
\begin{equation} \label{39}
G_{0,R}(z)=z^{R}/\left(z^{2}-1\right),
\end{equation}
and
\begin{equation} \label{40}
G_{s+1,R}(z)=G''_{s,R}(z)/\left(1-z^{2}\right)
\quad (s=0,1,2,\ldots ).
\end{equation}
For $R=0$ and $R=1$ a third slowly-varying coefficient function $\mathcal{G}_{R}(u,z)$, companion to (\ref{31}) and (\ref{32}) and valid for the same values of $z$, is then defined such that for $u \rightarrow \infty$
\begin{equation}
\label{eq35a}
\mathcal{G}_{R}(u,z) \sim \frac{1}{u^{2}}\sum\limits_{s=0}^{\infty} \frac{G_{s,R}(z)}{u^{2s}} -\frac{\gamma_{R}(u)J(u,z)}{u^{2/3}\zeta }\left(\frac{\zeta}{1-z^2}\right)^{1/4},
\end{equation}
where
\begin{multline} \label{36}
 J(u,z) \sim -\exp \left\{\sum\limits_{s=1}^{\infty} \frac{\tilde{\mathcal{E}}_{2s}(z)}{u^{2s}} \right\}\cosh \left\{\sum\limits_{s=0}^{\infty} 
\frac{\tilde{\mathcal{E}}_{2s+1}(z)}{u^{2s+1}}  
\right\}\sum\limits_{k=0}^{\infty} \frac{(3k)!}{k!\left( 3u^{2}\zeta^{3} \right)^{k}} \\ 
 +\frac{1}{u\zeta^{3/2}}\exp \left\{\sum\limits_{s=1}^{\infty} \frac{{\mathcal{E}}_{2s}(z)}{u^{2s}}  \right\}\sinh \left\{\sum\limits_{s=0}^{\infty} 
\frac{{\mathcal{E}}_{2s+1}(z)}{u^{2s+1}}  \right\}\sum\limits_{k=0}^{\infty}
\frac{(3k+1)!}{k!\left(3u^{2}\zeta^{3} \right)^{k}}.
\end{multline}

The function $\mathcal{G}_{R}(u,z)$ is analytic at $z=1$, but the truncated series (\ref{eq35a}) and (\ref{36}) are not and so break down near this point. An asymptotic expansion for $\mathcal{G}_{R}(u,z)$ can be computed near this point in similar way to $\mathcal{A}(u,z)$ and $\mathcal{B}(u,z)$, as described above. That is, either by Cauchy's integral formula, or by re-expanding the series in inverse powers of $u$ for a few terms, similarly to (\ref{A1}) and (\ref{B1}). In the latter case the coefficients are analytic at $z=1$ from \cref{thm:nopoles}.

Having defined all the terms required, from \cite[Sect. 4.1]{Dunster:2021:UAP} we then have the following main result, which in conjunction with (\ref{04}) provides a uniform asymptotic expansion for $\hat{N}_W\left(\tfrac{1}{2}u,\sqrt{2u} \, z\right)$ for the case $R=0$ (and likewise (\ref{38}) below).
\begin{theorem} \label{thm3}
For $R=0,1$
\begin{multline} \label{34b}
W_{R}^{(0,3)}\left(\tfrac{1}{2}u,\sqrt{2u} \, z\right)
=(2u)^{\frac{1}{2}R+1} \Big[
\mathcal{G}_{R}(u,z) \\
\left. + \pi \gamma_{R}(u)
\left\{\mathrm{Hi}\left(u^{2/3}\zeta  \right)\mathcal{A}(u,z) +  \mathrm{Hi}'\left(u^{2/3}\zeta \right)\mathcal{B}(u,z) \right\} 
\right],
\end{multline}
where $\mathrm{Hi}(z)$ is the Scorer function defined by
\begin{equation}
\label{eqHi}
\mathrm{Hi}(z)=\frac{1}{\pi }\int_{0}^{\infty} \exp  \left(-\tfrac{1}{3} t^{3}+z t \right)dt,
\end{equation}
$\zeta$ is defined by (\ref{zeta1}), and $\mathcal{A}(u,z)$, $\mathcal{B}(u,z)$, and $\mathcal{G}_{R}(u,z)$ possess the asymptotic expansions (\ref{31}), (\ref{32}), and (\ref{eq35a}), respectively, as $u \rightarrow \infty$ uniformly for $z$ bounded away by a distance $\delta$ from the cut $(-\infty,-1]$ (suitably modified as described after (\ref{phi}) if $z$ is close to or equal to 1).
\end{theorem}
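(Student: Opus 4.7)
The plan is to derive the expansion by applying the Liouville--Green (LG) reduction to the inhomogeneous equation (\ref{02d}) that $W_{R}^{(0,3)}(a,z)$ satisfies. Under the substitutions $a = \tfrac{1}{2}u$ and $z \mapsto \sqrt{2u}\,z$, equation (\ref{02d}) transforms into an equation of the form $w''(z) + u^{2}(z^{2}-1)w(z) = 2^{1+R/2}u^{1+R/2}z^{R}$, whose homogeneous part has a simple turning point at $z=1$ (and $z=-1$, but the right half-plane is what drives the definition of $W_{R}^{(0,3)}$). First I would apply the standard LG change of independent variable $\zeta = \zeta(z)$ in (\ref{zeta1})--(\ref{zeta2}), coupled with the dependent-variable change $w = (d\zeta/dz)^{-1/2}Y$, which one checks produces an inhomogeneous Airy-type equation $d^{2}Y/d\zeta^{2} - u^{2}\zeta Y = u^{2}\mathcal{R}_{R}(u,z(\zeta)) + \phi(z)^{-2}\Psi(z)Y$, with $\phi$ as in (\ref{phi}) and $\Psi$ the usual Schwarzian correction.

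Next I would construct a formal solution in the style of \cite{Dunster:2021:UAP}. The homogeneous part is handled by a Riccati-type recursion which yields the polynomials $\mathrm{E}_{s}(\beta)$ of (\ref{24})--(\ref{26}), and hence the slowly varying coefficient functions $\mathcal{A}(u,z)$ and $\mathcal{B}(u,z)$ with expansions (\ref{31}) and (\ref{32}); these multiply a homogeneous Airy pair evaluated at $u^{2/3}\zeta$. The inhomogeneity is then stripped off by variation of parameters using the Scorer pair $(\mathrm{Hi},\mathrm{Hi}')$, which is the natural comparison function since, from (\ref{eqHi}), $\mathrm{Hi}''(t)-t\mathrm{Hi}(t) = 1/\pi$; an iterative integration-by-parts on the Scorer-integral representation produces the algebraic polynomial-in-$1/u^{2}$ contribution $\mathcal{G}_{R}(u,z)$ of (\ref{eq35a})--(\ref{36}), whose coefficients $G_{s,R}(z)$ obey the elementary recursion (\ref{39})--(\ref{40}). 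The constants $\gamma_{R}(u)$ of (\ref{34})--(\ref{34.1}) emerge as normalization factors that pair the Scorer contribution with the correct amplitude in the rescaled variable.

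To identify this particular combination with $W_{R}^{(0,3)}$ itself, I would invoke the uniqueness property stated after (\ref{03}): $W_{R}^{(0,3)}(a,z)$ is the unique solution of (\ref{02d}) that is $\mathcal{O}(z^{R-2})$ as $z\to\infty$ with $|\arg z|\le\tfrac{1}{2}\pi - \delta$. As $\zeta\to-\infty$ (which corresponds to $z>1$ along the real axis, and more generally to the right half-plane), both $\mathrm{Hi}(u^{2/3}\zeta)$ and $\mathrm{Hi}'(u^{2/3}\zeta)$ decay algebraically, so the bracketed expression in (\ref{34b}) exhibits precisely the required algebraic decay, whereas any additional homogeneous Airy contribution $\mathrm{Ai}$ or $\mathrm{Bi}$ would introduce exponentially large terms in at least one of the Stokes sectors bordering the right half-plane. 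The specific value of $\gamma_{R}(u)$ is pinned down by matching the leading behaviour against (\ref{02b})--(\ref{02c}) as $z\to e^{\pm\pi i/4}\infty$, where the exact integral representation (\ref{03}) admits a saddle-point evaluation that produces exactly the combination in (\ref{34})--(\ref{34.1}) through the constant $\chi(u)$ described by (\ref{41})--(\ref{42}).

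The main obstacle, and the reason the complete proof properly resides in \cite{Dunster:2021:UAP} rather than being carried out afresh here, is establishing rigorous error bounds that make the expansions uniform across the turning point $z=1$. The truncated series for $\mathcal{A}$, $\mathcal{B}$, and $\mathcal{G}_{R}$ constructed from (\ref{31}), (\ref{32}), (\ref{eq35a}), (\ref{36}) are singular at $z=1$ because the coefficients $\mathcal{E}_{s}(z)$, $\tilde{\mathcal{E}}_{s}(z)$ carry negative powers of $\xi$, even though the exact functions $\mathcal{A}$, $\mathcal{B}$, $\mathcal{G}_{R}$ are analytic there; bypassing this requires rewriting each as a Cauchy integral over a contour enclosing $z=1$, as indicated in the discussion after (\ref{B5}), which together with \cref{thm:nopoles} recovers the analyticity of the coefficient functions themselves and permits uniform control of remainders in neighbourhoods of the turning point.
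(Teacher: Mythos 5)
The paper does not actually prove \cref{thm3}: it is imported verbatim from \cite[Sect.\ 4.1]{Dunster:2021:UAP}, with the surrounding text of \cref{sec3} serving only to translate that reference's notation ($a=\tfrac12 u$, argument rescaled by $\sqrt{2u}$) and to define $\zeta$, $\mathcal{A}$, $\mathcal{B}$, $\mathcal{G}_R$, $\gamma_R$. Your sketch is therefore doing more than the paper does, and it is a faithful reconstruction of the turning-point machinery that underlies the cited result: the rescaled equation $w''+u^2(z^2-1)w=(2u)^{1+R/2}z^R$ is correct, the Liouville--Green variable (\ref{zeta1}) is the right one for the turning point at $z=1$, the Scorer function is the correct comparison particular solution, and the identification with $W_R^{(0,3)}$ via its unique $\mathcal{O}(z^{R-2})$ recessiveness in \emph{both} quadrants of the right half-plane (so that any $\mathrm{Ai}$/$\mathrm{Bi}$ admixture is excluded by exponential dominance off the real axis) is exactly the right uniqueness argument. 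Three points where your sketch is looser than a proof would need to be: (i) the inhomogeneity is not removed by ``variation of parameters using the pair $(\mathrm{Hi},\mathrm{Hi}')$'' --- $\mathrm{Hi}$ is itself a particular solution, not half of a fundamental system; what is actually done is to split the particular solution into a formal algebraic series (producing the $G_{s,R}$ recursion (\ref{39})--(\ref{40})) plus a multiple of the Scorer-based approximant, with the coupling term visible in (\ref{eq35a}); (ii) (\ref{34b}) is asserted as an exact identity, so $\mathcal{A}$, $\mathcal{B}$, $\mathcal{G}_R$ must be \emph{defined} as exact functions (e.g.\ via exact connection relations among three solutions) that merely \emph{possess} the expansions (\ref{31}), (\ref{32}), (\ref{eq35a}); a formal construction alone does not yield the equality sign; and (iii) the determination of $\gamma_R(u)$, including the appearance of $\chi(u)$ from (\ref{41})--(\ref{42}), is asserted by a matching argument at $z\to e^{\pm\pi i/4}\infty$ rather than carried out. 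You correctly flag that the uniform error bounds across $z=1$ are the substantive content residing in \cite{Dunster:2021:UAP}, so none of these is a fatal gap for a sketch, but they mark the distance between your outline and a self-contained proof.
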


\begin{remark}
$\mathrm{Hi}(z)$ is the uniquely defined particular solution of the inhomogeneous Airy equation
\begin{equation}
\label{eqHiEq}
\frac{d^{2}w}{dz^{2}}-zw=\frac{1}{\pi},
\end{equation}
having the behavior
\begin{equation}
\label{HiZ}
\mathrm{Hi}(z)\sim -\frac{1}{\pi z} \quad \left(z\to \infty,\; \left| \arg(-z) \right|\le \tfrac{2}{3}\pi -\delta \right).
\end{equation}
This function grows exponentially as $z \rightarrow \infty$ in $|\arg(z)|\le \tfrac{1}{3}\pi -\delta$ (see \cite[Eq. 9.12.29]{NIST:DLMF}, and also \cite[Sect. 9.12]{NIST:DLMF} for further properties.)
\end{remark}

As $z \rightarrow \infty$ it follows from \cref{thm3} that $W_{R}^{(0,3)}(a,z)=\mathcal{O}(z^{-2+R})$ for $|\arg(z)| \\   \leq \tfrac{1}{2}\pi - \delta$, and is exponentially large as $z \rightarrow \infty$ for $\delta < |\arg(-z)| \leq \tfrac{1}{2}\pi - \delta$. No other solution of (\ref{02d}) shares this recessive property in parts of \emph{both} the first and fourth quadrants.

\begin{figure}[htbp]
  \centering
  \includegraphics[trim={0 100 0 0},width=0.8\textwidth,keepaspectratio]{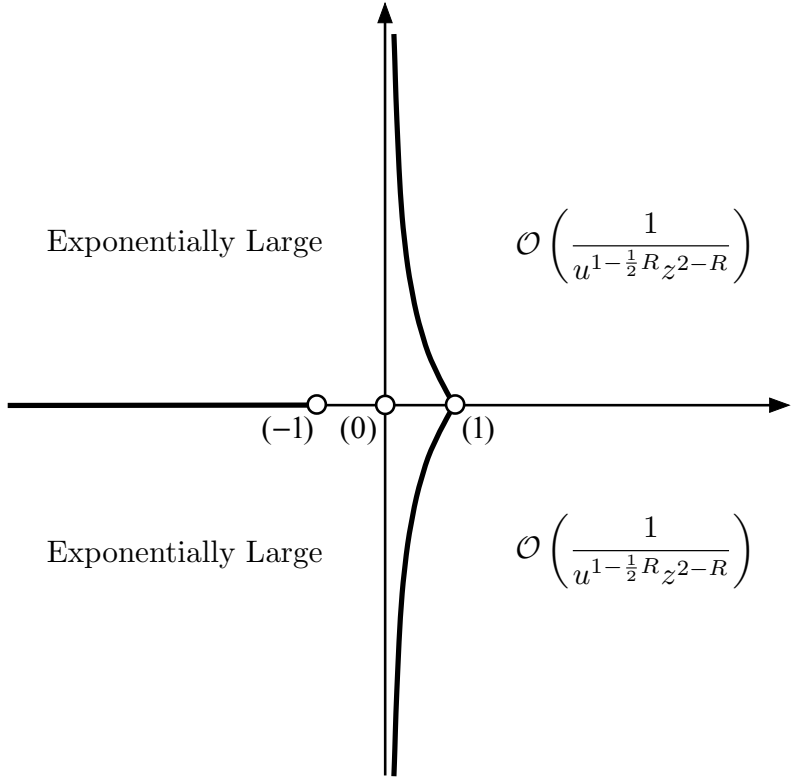}
  \caption{Regions of subdominance and dominance for $W_{R}^{(0,3)}(\tfrac{1}{2}u,\sqrt{2u} \, z)$}
  \label{fig:fig1}
\end{figure}

Moreover we have that for large $u$
\begin{equation} \label{38}
W_{R}^{(0,3)}\left(\tfrac{1}{2}u,\sqrt{2u} \, z\right)
\sim \frac{(2u)^{\frac{1}{2}R+1}}{u^{2}}\sum\limits_{s=0}^{\infty} \frac{G_{s,R}(z)}{u^{2s}}
=\mathcal{O}\left(\frac{1}{u^{1-\frac{1}{2}R} z^{2-R}}\right),
\end{equation}
uniformly for $z$ lying in "most" of the right half-plane. On the other hand this function is exponentially large in $u$ in a part of the right half-plane where $\Re(z)<1$, as well as in the whole of left half-plane. More precisely, the boundary that separates these two regions is depicted in \cref{fig:fig1}, and consists of two level curves $\Re(\xi)=0$ in the first and fourth quadrants, emanating from $z=1$, where $\xi$ is given by (\ref{zeta1}). The simple expansion (\ref{38}) can be considered as a special case of (\ref{34b}) that is uniformly valid in a more restricted region, namely for all unbounded $z$ to the right of the curves $\Re(\xi)=0$ and that lie at least a distance $\delta$ from them.

We finish with expansions for homogeneous solutions. We use the standard notation for Airy functions of complex argument $\mathrm{Ai}_{l}(z)=\mathrm{Ai}(z e^{-2\pi i l/3})$ ($l=0,\pm 1$), and with these we define the three functions 
\begin{equation} \label{30}
w_{l}(u,z) =\mathrm{Ai}_{l}\left(u^{2/3}\zeta \right) 
\mathcal{A}(u,z) +\mathrm{Ai}_{l}^{\prime }\left(u^{2/3}\zeta\right) \mathcal{B}(u,z).
\end{equation}
Then we have for unbounded $z$ as described in \cref{thm3}
\begin{equation} \label{41a}
W_{j}\left(\tfrac{1}{2}u,\sqrt {2u} z\right)
=2^{3/4}\sqrt{\pi}u^{-1/12}
e^{-\pi u/8} e^{\pm i (\chi(u)
+\frac{1}{24}\pi)} w_{\mp 1}(u,z),
\end{equation}
with upper signs for $j=0$ and lower signs for $j=3$.

Recall for large $z$ that $W_{0}(\frac{1}{2}u,\sqrt {2u} z)$, $W_{3}(\frac{1}{2}u,\sqrt {2u} z)$ and $W_{R}^{(0,3)}(\frac{1}{2}u,\sqrt {2u} z)$ form a numerically satisfactory set of functions in the subdominant region depicted in \cref{fig:fig1}, since $W_{0}(\frac{1}{2}u,\sqrt {2u} z)$ is recessive (exponentially small at infinity) in the first quadrant, $W_{3}(\frac{1}{2}u,\sqrt {2u} z)$ is recessive (exponentially small at infinity) in the fourth quadrant, and $W_{R}^{(0,3)}(\frac{1}{2}u,\sqrt {2u} z)$ is the unique particular solution that is subdominant in the above described region.

With these three fundamental solutions, an asymptotic expansion for $N_{W}(a,z)$ follows from  \cref{thm2.9,thm3}, (\ref{30}) and (\ref{41a}). This is certainly uniformly valid for $\Re(z) \geq 0$, with of course the left half-plane immediately being covered by the evenness of $N_{W}(a,z)$.

We also remark that an asymptotic expansion for $\hat{N}_{W}(\tfrac{1}{2}u,\sqrt{2u} \, z)$ valid in the whole left half-plane, and in particular including the cut $-\infty < z \leq -1$ excluded from (\ref{34b}), can be obtained from \cref{thm3} along with (\ref{22t}), (\ref{30}) and (\ref{41a}).

\section{Laplace Transforms of \texorpdfstring{$W(a,\pm t)$}{Lg}}
\label{sec4}
We consider the Laplace transform of the Weber functions defined by
\begin{equation} \label{60}
L_{W}^{\pm}(a,\lambda)  =\int_{0}^{\infty} e^{-\lambda t} W(a,\pm t)dt \quad \left(|\arg(\lambda)| \leq \tfrac{1}{2}\pi \right),
\end{equation}
and by analytic continuation for other values of $\arg(\lambda)$. As we shall show, both are entire functions of $\lambda$.

For bounded $a$ and large $\lambda$ the asymptotics are simple to obtain, and read as follows.
\begin{theorem} \label{thm4.1}
For fixed $a$ and $\lambda \rightarrow \infty$ with $|\arg(\lambda)| \leq \frac{1}{2}\pi - \delta$
\begin{equation} \label{60d}
L_{W}^{\pm}(a,\lambda) \sim
\frac{W(a,0)}{\lambda}
\sum_{ s=0}^{\infty} \frac{\alpha_{s}(a)}{\lambda^{2s}}
\pm \frac{W'(a,0)}{\lambda^{2}}
\sum_{ s=0}^{\infty} \frac{\beta_{s}(a)}{\lambda^{2s}},
\end{equation}
where $\alpha_{0}(a)=\beta_{0}(a)=1$, $\alpha_{1}(a)=\beta_{1}(a)=a$, and for $ s=0,1,2,\ldots$ 
\begin{equation}\label{60e}
\alpha_{s+2}=a\alpha_{s+1}-\tfrac{1}{2}(s+1)(2s+1)\alpha_{s},
\end{equation}
\begin{equation} \label{60f}
\beta_{s+2}=a\beta_{s+1}-\tfrac{1}{2}(s+1)(2s+3)\beta_{s},
\end{equation}
where $W(a,0)$ and $W'(a,0)$ are given by (\ref{15}) and (\ref{16}).
\end{theorem}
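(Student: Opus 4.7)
The plan is to apply a Watson-lemma style argument to the Laplace integral in (\ref{60}), which reduces the problem to computing the Taylor coefficients of $W(a,\pm t)$ at $t=0$; these are then tied together by the homogeneous Weber equation to yield the recurrences (\ref{60e}) and (\ref{60f}).

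First I would observe that $W(a,t)$ is entire in $t$ and, as exhibited in (\ref{05a}), has amplitude of only algebraic decay at infinity, so in particular it is of sub-exponential growth on $[0,\infty)$. Iterating integration by parts $N$ times in (\ref{60}) therefore yields
\begin{equation*}
L_{W}^{\pm}(a,\lambda)=\sum_{n=0}^{N-1}\frac{(\pm 1)^{n}W^{(n)}(a,0)}{\lambda^{n+1}}+\frac{(\pm 1)^{N}}{\lambda^{N}}\int_{0}^{\infty}e^{-\lambda t}W^{(N)}(a,\pm t)\,dt,
\end{equation*}
where $W^{(n)}$ denotes the $n$th derivative of $W$ with respect to its second argument. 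A simple induction using the Weber equation (repeated below) shows that $W^{(N)}(a,t)$ has amplitude of at most polynomial growth, so that the remainder integral is uniformly $O(|\lambda|^{-N-1/2})$ in any sector $|\arg(\lambda)|\leq\tfrac{1}{2}\pi-\delta$. This establishes the underlying expansion
\begin{equation*}
L_{W}^{\pm}(a,\lambda)\sim\sum_{n=0}^{\infty}\frac{(\pm 1)^{n}W^{(n)}(a,0)}{\lambda^{n+1}}.
\end{equation*}

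Second I would compute the Taylor coefficients recursively. Differentiating $W''(a,t)=(a-\tfrac{1}{4}t^{2})W(a,t)$ exactly $k$ times via the Leibniz rule, and using that $a-\tfrac{1}{4}t^{2}$ has only three nonvanishing derivatives, evaluation at $t=0$ produces
\begin{equation*}
W^{(k+2)}(a,0)=aW^{(k)}(a,0)-\tfrac{1}{4}k(k-1)W^{(k-2)}(a,0)\quad(k\geq 2),
\end{equation*}
with starting data $W(a,0)$, $W'(a,0)$, $W''(a,0)=aW(a,0)$ and $W'''(a,0)=aW'(a,0)$. This immediately forces $W^{(2s)}(a,0)$ to be a numerical multiple of $W(a,0)$ and $W^{(2s+1)}(a,0)$ to be a numerical multiple of $W'(a,0)$, justifying the splitting in (\ref{60d}) with $\alpha_{s}(a)=W^{(2s)}(a,0)/W(a,0)$ and $\beta_{s}(a)=W^{(2s+1)}(a,0)/W'(a,0)$. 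Specialising the displayed recurrence to $k=2s+2$ and $k=2s+3$ and dividing by $W(a,0)$ and $W'(a,0)$ respectively then gives (\ref{60e}) and (\ref{60f}), while $\alpha_{0}=\beta_{0}=1$ and $\alpha_{1}=\beta_{1}=a$ are read off from the base cases.

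The step most likely to require care is the sub-exponential growth control needed for the Watson-lemma argument, since $W(a,t)$ is neither integrable nor exponentially decaying on $[0,\infty)$. This obstacle is handled in a routine way by combining the algebraic amplitude bound from (\ref{05a}) with the Leibniz-derived recursion for $W^{(N)}$. Alternatively, and perhaps more transparently, I could note that integrating by parts exactly twice and applying the Weber equation inside the integral shows that $L_{W}^{\pm}(a,\lambda)$ satisfies the second-order inhomogeneous equation
\begin{equation*}
\tfrac{1}{4}\frac{d^{2}L_{W}^{\pm}}{d\lambda^{2}}+(\lambda^{2}-a)L_{W}^{\pm}=\lambda W(a,0)\pm W'(a,0);
\end{equation*}
since the two homogeneous solutions oscillate with amplitude $O(\lambda^{-1/2})$ on the positive real axis, $L_{W}^{\pm}$ is singled out as the unique particular solution with algebraic decay, and a formal substitution of $L_{W}^{\pm}\sim\sum d_{n}/\lambda^{n+1}$ into this ODE recovers (\ref{60d})--(\ref{60f}) unambiguously.
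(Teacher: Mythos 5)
Your proposal is correct and follows essentially the same route as the paper: the paper simply cites Watson's lemma (Olver, Chap.~4, Thm.~3.2) for the expansion $L_{W}^{\pm}(a,\lambda)\sim\sum_{s}(\pm 1)^{s}a_{s}s!/\lambda^{s+1}$ and the DLMF recurrences 12.14.8--12.14.12 for the Maclaurin coefficients of $W(a,t)$, whereas you re-derive both (Watson's lemma via repeated integration by parts with the needed growth control, and the coefficient recurrence via Leibniz differentiation of the Weber equation). Your identifications $\alpha_{s}=W^{(2s)}(a,0)/W(a,0)$, $\beta_{s}=W^{(2s+1)}(a,0)/W'(a,0)$ and the resulting recurrences check out exactly against (\ref{60e})--(\ref{60f}).
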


\begin{proof}
We have by Watson's lemma \cite[Chap. 4, Thm. 3.2]{Olver:1997:ASF} applied to (\ref{60})
\begin{equation} \label{60b}
L_{W}^{\pm}(a,\lambda) \sim
\sum_{s=0}^{\infty} (\pm 1)^{s} \frac{a_{s}s!}{\lambda^{s+1}},
\end{equation}
for $|\arg(\lambda)| \leq \frac{1}{2}\pi - \delta$, where $a_{s}$ are the coefficients in the asymptotic expansion
\begin{equation} \label{60c}
W(a,t) \sim
\sum_{s=0}^{\infty}a_{s}t^{s} \quad (t \rightarrow 0).
\end{equation}
These coefficients are given by \cite[Eqs. 12.14.8 - 12.14.12]{NIST:DLMF} (actually (\ref{60c}) is a convergent series), and inserting them into (\ref{60b}) yields (\ref{60d}) - (\ref{60f}).
\end{proof}

We now consider the more difficult problem of obtaining asymptotic expansions for large $a$ that are uniformly valid for all real and complex $\lambda$, including the analytic continuation of (\ref{60}) to the left half $\lambda$ plane where the integral diverges. To this end we begin with the following.
\begin{lemma} \label{lem4.2}
Each $L_{W}^{\pm}(a,\lambda)$ is an entire function of $\lambda$ and satisfies the differential equation
\begin{equation} \label{60a}
\frac{\partial^2  L_{W}^{\pm}(a,\lambda) }{\partial \lambda^2}
 =4\left(a-\lambda^2\right)L_{W}^{\pm}(a,\lambda)  \pm 4W'(a,0)+4\lambda W(a,0).
\end{equation}
\end{lemma}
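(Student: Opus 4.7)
The plan is to establish the differential equation in the half-plane $\Re(\lambda)>0$ where everything is absolutely convergent, and then use the ODE itself to propagate $L_{W}^{\pm}(a,\lambda)$ to an entire function of $\lambda$.

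First, I would verify analyticity and differentiability under the integral sign for $\Re(\lambda)>0$. The large-$t$ asymptotics (\ref{05a}) and (\ref{05b}) give $W(a,\pm t)=\mathcal{O}(t^{-1/2})$, so the integrand $e^{-\lambda t}W(a,\pm t)$ is dominated locally uniformly by $t^{-1/2}e^{-\sigma t}$ for some $\sigma>0$; the same is true after multiplication by any polynomial in $t$. Hence $L_{W}^{\pm}(a,\lambda)$ and all its $\lambda$-derivatives are given by the corresponding integrals in $\Re(\lambda)>0$, and the function is analytic there.

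Next I would derive (\ref{60a}) by integrating the Weber equation against $e^{-\lambda t}$. Setting $f^{\pm}(t)=W(a,\pm t)$ one has $(f^{\pm})''(t)+\bigl(\tfrac{1}{4}t^{2}-a\bigr)f^{\pm}(t)=0$, together with $f^{\pm}(0)=W(a,0)$ and $(f^{\pm})'(0)=\pm W'(a,0)$. Multiplying by $e^{-\lambda t}$ and integrating termwise over $(0,\infty)$ converts the $t^{2}f^{\pm}$ term into $\partial^{2}L_{W}^{\pm}/\partial\lambda^{2}$, while two integrations by parts on the $(f^{\pm})''$ term produce $\lambda^{2}L_{W}^{\pm}(a,\lambda)-\lambda W(a,0)\mp W'(a,0)$. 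Boundary contributions at $t=\infty$ vanish since differentiating (\ref{05a}) and (\ref{05b}) shows $(f^{\pm})'(t)=\mathcal{O}(t^{1/2})$, which is killed by $e^{-\lambda t}$ when $\Re(\lambda)>0$. Rearranging gives exactly (\ref{60a}).

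Finally I would invoke the fact that (\ref{60a}), viewed as an ODE in $\lambda$, has polynomial (hence entire) coefficients $4(a-\lambda^{2})$ and a polynomial inhomogeneous term $\pm 4W'(a,0)+4\lambda W(a,0)$; it therefore has no finite singular points, so every solution admits unique analytic continuation to an entire function of $\lambda$. Since $L_{W}^{\pm}(a,\lambda)$ agrees with such a solution throughout $\{\Re(\lambda)>0\}$, the identity theorem forces it to be entire.

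The only genuinely subtle point is justifying that the boundary terms from the two integrations by parts really do vanish at $t=\infty$ uniformly on compact subsets of $\{\Re(\lambda)>0\}$; this is what requires the quantitative $\mathcal{O}(t^{-1/2})$ and $\mathcal{O}(t^{1/2})$ bounds on $W(a,\pm t)$ and its $t$-derivative from (\ref{05a})–(\ref{05c}). Once that bookkeeping is done, both assertions of the lemma follow at once.
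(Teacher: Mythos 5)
Your proposal is correct and follows essentially the same route as the paper: both derive (\ref{60a}) in a right half-plane by combining differentiation under the integral sign (to produce $\int_0^\infty t^2 e^{-\lambda t}W(a,\pm t)\,dt$), the Weber equation, and two integrations by parts whose boundary terms yield $\mp W'(a,0)-\lambda W(a,0)$, and both then conclude entirety by analytic continuation via the fact that the ODE has no finite singularities. Your extra bookkeeping with the $\mathcal{O}(t^{\mp 1/2})$ bounds from (\ref{05a})--(\ref{05c}) is a valid justification of the convergence and vanishing boundary terms that the paper leaves implicit.
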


\begin{proof}
Assume temporarily that $|\arg(\lambda)| \leq \frac{1}{2}\pi - \delta$. Differentiation of (\ref{60}) twice with respect to $\lambda$ gives
\begin{equation} \label{61}
\frac{\partial^2  L_{W}^{\pm}(a,\lambda) }{\partial \lambda^2}  =\int_{0}^{\infty} t^2 e^{-\lambda t} W(a,\pm t)dt.
\end{equation}
Then from the Weber differential equation (\ref{02a}) we find that
\begin{equation} \label{62}
\frac{\partial^2  L_{W}^{\pm}(a,\lambda) }{\partial \lambda^2}
 =4a\int_{0}^{\infty} e^{-\lambda t} W(a,\pm t)dt
 -4\int_{0}^{\infty} e^{-\lambda t} W''(a,\pm t)dt.
\end{equation}
Next from integration by parts twice of the second integral on the RHS of (\ref{62}), and referring to (\ref{60}), we then get (\ref{60a}). Finally, by analytic continuation the restriction on $\arg(\lambda)$ can be relaxed and $L_{W}^{\pm}(a,\lambda)$ are seen to be entire functions of $\lambda$ since the differential equations (\ref{60a}) they satisfy have no finite singularities.
\end{proof}

As a consequence the following explicit representation can be inferred.
\begin{theorem}
The Laplace transforms (\ref{60}) are expressible in terms of the general Nield-Kuznetsov functions (\ref{03}) via
\begin{equation} \label{63}
L_{W}^{\pm}(a,\lambda)  =\pm W'(a,0) W_{0}^{(0,3)}(a,2\lambda)
+\tfrac{1}{2}W(a,0) W_{1}^{(0,3)}(a,2\lambda).
\end{equation}
\end{theorem}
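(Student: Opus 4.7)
The plan is to reduce (\ref{63}) to the uniqueness of $W_{R}^{(0,3)}(a,z)$ as a subdominant particular solution of (\ref{02d}) in the right half-plane, via matching of inhomogeneous linear ODEs. First I would rescale $z=2\lambda$ in \cref{lem4.2}, setting $\tilde y(z)=L_{W}^{\pm}(a,z/2)$, which converts (\ref{60a}) into
\[
\frac{d^{2}\tilde y}{dz^{2}}+\left(\frac{z^{2}}{4}-a\right)\tilde y
=\pm W'(a,0)+\tfrac{1}{2}z\,W(a,0).
\]
The forcing is a linear combination of the terms $z^{0}$ and $z^{1}$ appearing in (\ref{02d}), so by linearity
\[
\tilde V(z):=\pm W'(a,0)\,W_{0}^{(0,3)}(a,z)+\tfrac{1}{2}W(a,0)\,W_{1}^{(0,3)}(a,z)
\]
is another particular solution, and evaluating at $z=2\lambda$ reproduces the right-hand side of (\ref{63}).

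Next I would examine the difference $\phi^{\pm}(\lambda):=L_{W}^{\pm}(a,\lambda)-\tilde V(2\lambda)$, which solves the associated homogeneous equation and hence admits the representation $\phi^{\pm}(\lambda)=c_{1}W_{0}(a,2\lambda)+c_{2}W_{3}(a,2\lambda)$ for some constants depending on $a$ and on the sign. Watson's lemma applied to (\ref{60}) (as in the proof of \cref{thm4.1}) yields $L_{W}^{\pm}(a,\lambda)=\mathcal{O}(\lambda^{-1})$ as $\lambda\to\infty$ with $|\arg\lambda|\leq\tfrac{1}{2}\pi-\delta$, while the observation following (\ref{03}), reaffirmed in (\ref{38}), gives $W_{R}^{(0,3)}(a,2\lambda)=\mathcal{O}(\lambda^{R-2})$ in the same sector. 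Together these force $\phi^{\pm}(\lambda)\to 0$ throughout $|\arg\lambda|\leq\tfrac{1}{2}\pi-\delta$.

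The final task is to conclude $c_{1}=c_{2}=0$, and this will be the main obstacle. Along the positive real $\lambda$-axis both $W_{0}(a,2\lambda)$ and $W_{3}(a,2\lambda)$ merely oscillate with amplitude $\mathcal{O}(\lambda^{-1/2})$, so pointwise decay of $\phi^{\pm}$ along $\mathbb{R}^{+}$ alone is insufficient. To sidestep this I would walk rays $\arg\lambda=\theta$ with $0<|\theta|<\pi/4$: from (\ref{02c}), for $0<\theta<\pi/4$ the factor $e^{-i\lambda^{2}}$ makes $W_{3}(a,2\lambda)$ exponentially large while $W_{0}(a,2\lambda)$ is exponentially small, and symmetrically for $-\pi/4<\theta<0$ the roles are exchanged. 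Boundedness of $\phi^{\pm}$ on each of these rays then kills $c_{2}$ and thereafter $c_{1}$, completing (\ref{63}).
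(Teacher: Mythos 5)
Your proposal is correct and follows essentially the same route as the paper: identify (\ref{60a}) as the inhomogeneous Weber equation (\ref{02d}) in $z=2\lambda$ with forcing $\pm W'(a,0)+\tfrac{1}{2}zW(a,0)$, write the general solution as the stated particular solution plus $A_0W_0+A_3W_3$, and kill the homogeneous part by sending $\lambda\to\infty$ along rays in the open upper and lower portions of the right half-plane, where the decay of $L_W^{\pm}$ (Watson's lemma) and of $W_R^{(0,3)}$ clashes with the exponential growth of $W_3$ and $W_0$ respectively. The only cosmetic difference is your restriction to $|\arg\lambda|<\pi/4$ where the paper uses the full sector $\delta\leq|\arg\lambda|\leq\tfrac{1}{2}\pi-\delta$; both suffice.
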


\begin{proof}
The differential equation (\ref{60a}) is an inhomogeneous form of (\ref{02a}) with $z$ replaced $2\lambda$. Thus
\begin{multline} \label{63a}
L_{W}^{\pm}(a,\lambda)  =\pm W'(a,0) W_{0}^{(0,3)}(a,2\lambda)
+\tfrac{1}{2}W(a,0) W_{1}^{(0,3)}(a,2\lambda) \\
+A_{0}(a)W_{0}(a,2\lambda)+A_{3}(a)W_{3}(a,2\lambda),
\end{multline}
for some constants $A_{0}(a)$ and $A_{3}(a)$. Now as $\lambda \rightarrow \infty$ with $\delta\leq \arg(\lambda) \leq \frac{1}{2}\pi-\delta$ all terms on the RHS of (\ref{63a}) vanish, with the exception of $W_{3}(a,2\lambda)$ which is unbounded. But from (\ref{60d}) the LHS vanishes in this limit, and hence $A_{3}(a)=0$. Similarly by letting $\lambda \rightarrow \infty$ with $-\frac{1}{2}\pi+\delta \leq\arg(\lambda) \leq -\delta<0$ we deduce that $A_{0}(a)=0$.
\end{proof}

We now proceed to obtain uniform asymptotic expansions using these representations. If we let $u=2a$ and define $z$ by
\begin{equation} \label{lambda}
2\lambda = \sqrt{2u} \, z,
\end{equation}
then from using the asymptotic expansions of \cref{thm3} in the RHS of (\ref{63}) we have under the conditions of that theorem
\begin{multline} \label{90}
L_{W}^{\pm}\left(\tfrac{1}{2}u,\sqrt{\tfrac{1}{2}u}\,z\right)  \sim 
\pm \frac{2}{u}W'\left(\tfrac{1}{2}u,0\right)
\sum\limits_{s=0}^{\infty} \frac{G_{s,0}(z)}{u^{2s}} 
+\sqrt{\frac{2}{u}}W\left(\tfrac{1}{2}u,0\right)
\sum\limits_{s=0}^{\infty} \frac{G_{s,1}(z)}{u^{2s}}
\\ 
+\nu^{\pm}(u)\left\{\pi \mathrm{Hi}\left(u^{2/3}\zeta  \right)\mathcal{A}(u,z)  +  \pi \mathrm{Hi}^{\prime}\left(u^{2/3}\zeta \right)\mathcal{B}(u,z)
-\frac{J(u,z)}{u^{2/3}\zeta }\left(\frac{\zeta}{1-z^2}\right)^{1/4}\right\},
\end{multline}
where
\begin{equation} \label{nu}
\nu^{\pm}(u)=
\pm 2u\gamma_{0}(u)W'\left(\tfrac{1}{2}u,0\right)
+\sqrt{2}u^{3/2}\gamma_{1}(u)W\left(\tfrac{1}{2}u,0\right).
\end{equation}

Consider first $L_{W}^{+}(\frac{1}{2}u,\sqrt{\frac{1}{2}u}\,z)$. We find from (\ref{15}), (\ref{16}), (\ref{stirling}), (\ref{34}), (\ref{34.1}), and (\ref{92}) for large $u$ and arbitrary positive integer $n$ that
\begin{equation}  \label{93}
2u\gamma_{0}(u)W'\left(\tfrac{1}{2}u,0\right)
=-2^{-1/4}u^{-1/12}
\left\{1+\mathcal{O}\left(u^{-n}\right)\right\},
\end{equation}
and
\begin{equation}  \label{94}
\sqrt{2}u^{3/2}\gamma_{1}(u)W\left(\tfrac{1}{2}u,0\right)
=2^{-1/4}u^{-1/12}
\left\{1+\mathcal{O}\left(u^{-n}\right)\right\}.
\end{equation}
 Now from these two equations and (\ref{nu}) we see that $\nu^{+}(u)=\mathcal{O}(u^{-n})$, and hence from (\ref{15}), (\ref{16}), (\ref{stirling}), (\ref{39}), (\ref{40}), and (\ref{90}) we find that
\begin{equation} \label{95}
L_{W}^{+}\left(\tfrac{1}{2}u,\sqrt{\tfrac{1}{2}u}\,z\right)  \sim 
\frac{2^{1/4}}{u^{3/4}}\sum\limits_{s=0}^{\infty} \frac{G_{s}^{+}(z)}{u^{2s}},
\end{equation}
where $G_{s}^{+}(z)$ are coefficients in the formal expansion
\begin{equation} \label{105c}
\frac{2}{u}W'\left(\tfrac{1}{2}u,0\right)
\sum\limits_{s=0}^{\infty} \frac{G_{s,0}(z)}{u^{2s}} 
+\sqrt{\frac{2}{u}}W\left(\tfrac{1}{2}u,0\right)
\sum\limits_{s=0}^{\infty} \frac{G_{s,1}(z)}{u^{2s}}
\sim \frac{2^{1/4}}{u^{3/4}}\sum\limits_{s=0}^{\infty} 
\frac{G_{s}^{+}(z)}{u^{2s}}.
\end{equation}
These can be computed by using (\ref{15}), (\ref{16}), (\ref{stirling}), (\ref{39}), and (\ref{40}). For the first two we find
\begin{equation} \label{95a}
G_{0}^{+}(z)=\frac{1}{z+1},
\end{equation}
and
\begin{equation} \label{95b}
G_{1}^{+}(z)=\frac { (z+3) \left( {z}^{2}+2z+5 \right) }{8(z+1)^{4}}.
\end{equation}

The expansion (\ref{95}) is certainly valid to the right of the curves $\Re(\xi)=0$ shown in \cref{fig:fig1}, but it is not clear if the term in braces in (\ref{90}) is negligible elsewhere, or indeed if the all the coefficients are defined at $z=1$. To verify this, and improve accuracy, we obtain an improved version of the expansion. We do this by using the connection formula \cite[Eqs. (5.26) and (5.27)]{Dunster:2021:UAP}
\begin{equation} \label{96}
W_{R}^{(0,3)}\left(\tfrac{1}{2}u,\sqrt{2u} \, z\right)
=W_{R}^{(0,2)}\left(\tfrac{1}{2}u,\sqrt{2u} \, z\right)
+\mu_{R}(u) W_{0}\left(\tfrac{1}{2}u,\sqrt{2u} \, z\right),
\end{equation}
where
\begin{equation} \label{97}
\mu_{0}(u)=2^{-\frac{1}{4}+\frac{1}{4}iu} e^{\pi i/4} e^{u\pi/4}
\Gamma\left( \tfrac{1}{4}+\tfrac {1}{4}iu \right),
\end{equation}
and
\begin{equation} \label{98}
\mu_{1}(u)=
2^{\frac{5}{4}+\frac{1}{4}iu} e^{u\pi/4}
\Gamma\left( \tfrac{3}{4}+\tfrac {1}{4}iu \right).
\end{equation}
Thus from (\ref{63}) and (\ref{96}) we have the exact expression
\begin{multline} \label{99}
L_{W}^{+}\left(\tfrac{1}{2}u,\sqrt{\tfrac{1}{2}u}z\right)  
= W'\left(\tfrac{1}{2}u,0\right) W_{0}^{(0,2)}\left(\tfrac{1}{2}u,\sqrt{2u} \, z\right)
\\
+\tfrac{1}{2}W\left(\tfrac{1}{2}u,0\right) W_{1}^{(0,2)}\left(\tfrac{1}{2}u,\sqrt{2u} \, z\right)
+\mu(u) W_{0}\left(\tfrac{1}{2}u,\sqrt{2u} \, z\right),
\end{multline}
where
\begin{equation} \label{mu}
\mu(u)=
\mu_{0}(u)W'\left(\tfrac{1}{2}u,0\right)
+\tfrac{1}{2}\mu_{1}(u)W\left(\tfrac{1}{2}u,0\right).
\end{equation}

Similarly, in terms of $W_{0}^{(1,3)}(\frac{1}{2}u,\sqrt{2u} \, z)$ instead of $W_{0}^{(0,2)}(\frac{1}{2}u,\sqrt{2u} \, z)$, we can show that
\begin{multline} \label{99a}
L_{W}^{+}\left(\tfrac{1}{2}u,\sqrt{\tfrac{1}{2}u}z\right)  
= W'\left(\tfrac{1}{2}u,0\right) W_{0}^{(1,3)}\left(\tfrac{1}{2}u,\sqrt{2u} \, z\right)
\\
+\tfrac{1}{2}W\left(\tfrac{1}{2}u,0\right) W_{1}^{(1,3)}\left(\tfrac{1}{2}u,\sqrt{2u} \, z\right)
+\overline{\mu(u)} W_{3}\left(\tfrac{1}{2}u,\sqrt{2u} \, z\right).
\end{multline}

At this stage it is worth emphasizing for large $z$ and either $R=0$ or $R=1$ that $W_{R}^{(0,2)}(\frac{1}{2}u,\sqrt{2u} \, z)$ is characterized as being bounded in the first and third quadrants, and $W_{R}^{(1,3)}(\frac{1}{2}u,\sqrt{2u} \, z)$ is characterized as being bounded in the second and fourth quadrants. While (\ref{99}) and (\ref{99a}) can both in theory be used for all $z$, we only use them in the quadrants in which these functions are bounded. Taken together, both of them cover the whole complex plane.

Thus for example, for the right half-plane (\ref{99}) (respectively (\ref{99a})) is only numerically useful in the first (respectively fourth) quadrant, since in the other quadrant all functions of the RHS of (\ref{99}) and (\ref{99a}) are exponentially large, with large cancellations due to the fact $L_{W}^{+}(\tfrac{1}{2}u,\sqrt{\frac{1}{2}u}z)$ is bounded in the right half-plane to the right of the curves depicted in \cref{fig:fig1} emanating from $z=1$. 

In summary, since $W_{R}^{(0,2)}(\tfrac{1}{2}u,\sqrt{2u} \, z)$ is bounded in the first and third quadrants we use (\ref{99}) in these quadrants (excluding points on or near the cut $-\infty < z \leq -1$ where the asymptotic expansions break down), and likewise we use (\ref{99a}) in the conjugate region.

The asymptotic expansion for $W_{R}^{(0,2)}(\tfrac{1}{2}u,\sqrt{2u} \, z)$ is the same as (\ref{34b}) except with the Scorer function $\mathrm{Hi}(z)$ replaced by its rotated form $e^{2 \pi i/3}\mathrm{Hi}(ze^{2\pi i/3})$. We thus find similarly to the derivation of the expansion (\ref{95}) that
\begin{multline} \label{99b}
W'\left(\tfrac{1}{2}u,0\right) W_{0}^{(0,2)}\left(\tfrac{1}{2}u,\sqrt{2u} \, z\right)
\\
+\tfrac{1}{2}W\left(\tfrac{1}{2}u,0\right) W_{1}^{(0,2)}\left(\tfrac{1}{2}u,\sqrt{2u} \, z\right)
\sim \frac{2^{1/4}}{u^{3/4}}\sum\limits_{s=0}^{\infty} \frac{G_{s}^{+}(z)}{u^{2s}}.
\end{multline}
For $0\leq \arg(\pm z) \leq \pi/2$ and $u \rightarrow \infty$ we then have from (\ref{99}) and (\ref{99b})
\begin{equation} \label{103}
L_{W}^{+}\left(\tfrac{1}{2}u,\sqrt{\tfrac{1}{2}u}\,z\right)  \sim 
\frac{2^{1/4}}{u^{3/4}}\sum\limits_{s=0}^{\infty} \frac{G_{s}^{+}(z)}{u^{2s}} 
+\mu(u) W_{0}\left(\tfrac{1}{2}u,\sqrt{2u} \, z\right),
\end{equation}
and similarly from (\ref{99a}) for $-\pi /2 \leq \arg(\pm z) \leq 0$
\begin{equation} \label{104}
L_{W}^{+}\left(\tfrac{1}{2}u,\sqrt{\tfrac{1}{2}u}\,z\right)  \sim 
\frac{2^{1/4}}{u^{3/4}}\sum\limits_{s=0}^{\infty} \frac{G_{s}^{+}(z)}{u^{2s}} 
+\overline{\mu(u)} W_{3}\left(\tfrac{1}{2}u,\sqrt{2u} \, z\right),
\end{equation}
in both cases excluding points on or near the cut $-\infty < z \leq -1$.

Finally, after some calculation using (\ref{15}), (\ref{16}), (\ref{reflect}), (\ref{97}), (\ref{98}), and (\ref{mu}) it can be shown that
\begin{equation} \label{100}
\mu(u)=
\left(\frac{\pi e^{\pi u}}{2}\right)^{1/4}
\left\{
\Gamma\left( \tfrac{1}{2}+\tfrac {1}{2}iu \right)
\right\}^{1/2}
\left[T^{1/4}(u)-e^{\pi i /4}T^{-1/4}(u)
\right],
\end{equation}
where
\begin{equation} \label{101}
T(u)=\frac{1 + i \tanh\left(\tfrac{1}{4} \pi u \right)}
{1 - i \tanh\left(\tfrac{1}{4} \pi u \right)}.
\end{equation}
From this it follows that
\begin{equation} \label{102}
\mu(u)=e^{5\pi i /8}
\left(\frac{\pi }{2 e^{\pi u}}\right)^{1/4}
\left\{\Gamma\left( \tfrac{1}{2}+\tfrac {1}{2}iu \right)
\right\}^{1/2}
\left\{1+\mathcal{O}\left(e^{-\pi u}\right)\right\},
\end{equation}
which incidentally (with the aid of (\ref{stirling})) shows that $\mu(u)=\mathcal{O}(e^{-3\pi u/8})$.

Our first desired uniform asymptotic expansion is then given as follows.
\begin{theorem} \label{thm4.4}
Let $\mathrm{Ai}_{\mp 1}(z)=\mathrm{Ai}(z e^{\pm 2\pi i /3})$. Then as $u \rightarrow \infty$ we have uniformly
\begin{multline} \label{104a}
L_{W}^{+}\left(\tfrac{1}{2}u,\sqrt{\tfrac{1}{2}u}\,z\right)  \sim 
\frac{2^{1/4}}{u^{3/4}}\sum\limits_{s=0}^{\infty} 
\frac{G_{s}^{+}(z)}{u^{2s}} 
+\sqrt{2}\pi^{3/4} u^{-1/12}
e^{-\frac{3}{8}\pi u \pm \frac {2}{3} \pi i}
{\left\vert
\Gamma\left( \tfrac{1}{2}+\tfrac {1}{2}iu \right)
\right\vert} ^{1/2} \\
\times \left\{
\mathrm{Ai}_{\mp 1}\left(u^{2/3}\zeta \right) 
\mathcal{A}(u,z) 
+\mathrm{Ai}_{\mp 1}^{\prime }\left(u^{2/3}\zeta\right) \mathcal{B}(u,z)\right\},
\end{multline}
where upper signs are taken for $z$ in the first and third quadrants, and lower signs for $z$ in the second and fourth quadrants. In both cases points within a distance of $\delta$ of the cut $-\infty < z \leq -1$ must be excluded, but otherwise $z$ is unrestricted. The coefficients $G_{s}^{+}(z)$ are rational functions whose only poles are at $z=-1$, and are given by (\ref{105c}) using (\ref{15}), (\ref{16}), (\ref{stirling}), (\ref{39}), (\ref{40}). Also, $\zeta$ is given by (\ref{zeta1}), and $\mathcal{A}(u,z)$ and $\mathcal{B}(u,z)$ are functions (analytic at $z=1$) having the expansions (\ref{31}) and (\ref{32}), respectively.
\end{theorem}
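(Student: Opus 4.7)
My plan is to derive \cref{thm4.4} by substituting the uniform asymptotic expansion (\ref{41a}) for the Weber functions into the exact identities (\ref{103}) and (\ref{104}), and then collapsing the resulting prefactor. Specifically, I would take (\ref{103}) in the first and third quadrants and apply (\ref{41a}) with $j=0$ (upper signs) to $W_{0}(\tfrac{1}{2}u,\sqrt{2u}\,z)$, obtaining an asymptotic expansion whose non-algebraic part equals $\mu(u)\cdot 2^{3/4}\sqrt{\pi}\,u^{-1/12}e^{-\pi u/8}e^{i(\chi(u)+\pi/24)}w_{-1}(u,z)$. Similarly, for the second and fourth quadrants I would start from (\ref{104}) and use (\ref{41a}) with $j=3$ (lower signs), producing $\overline{\mu(u)}\cdot 2^{3/4}\sqrt{\pi}\,u^{-1/12}e^{-\pi u/8}e^{-i(\chi(u)+\pi/24)}w_{+1}(u,z)$.

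The central computation is then to simplify the scalar prefactor. Using (\ref{102}), I would write $\mu(u)=e^{5\pi i/8}(\pi/2)^{1/4}e^{-\pi u/4}\{\Gamma(\tfrac{1}{2}+\tfrac{1}{2}iu)\}^{1/2}\{1+O(e^{-\pi u})\}$, combine the exponential factors via $e^{-\pi u/4}\cdot e^{-\pi u/8}=e^{-3\pi u/8}$, and combine the phases via $\tfrac{5}{8}\pi+\tfrac{1}{24}\pi=\tfrac{15}{24}\pi+\tfrac{1}{24}\pi=\tfrac{2}{3}\pi$. The crucial identification, using (\ref{92}) and the conjugate symmetry $\Gamma(\overline{z})=\overline{\Gamma(z)}$, is that
\begin{equation*}
e^{i\chi(u)}\bigl\{\Gamma\bigl(\tfrac{1}{2}+\tfrac{1}{2}iu\bigr)\bigr\}^{1/2}
=\bigl\{\Gamma\bigl(\tfrac{1}{2}-\tfrac{1}{2}iu\bigr)\Gamma\bigl(\tfrac{1}{2}+\tfrac{1}{2}iu\bigr)\bigr\}^{1/4}\bigl\{1+O(u^{-n})\bigr\}
=\bigl|\Gamma\bigl(\tfrac{1}{2}+\tfrac{1}{2}iu\bigr)\bigr|^{1/2}\bigl\{1+O(u^{-n})\bigr\}
\end{equation*}
for arbitrary $n$. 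This collapses the upper-sign prefactor to $\sqrt{2}\,\pi^{3/4}u^{-1/12}e^{-3\pi u/8+2\pi i/3}|\Gamma(\tfrac{1}{2}+\tfrac{1}{2}iu)|^{1/2}$. The lower-sign case then follows by taking the complex conjugate of the full prefactor, since the scalar factor gets conjugated while $|\Gamma(\cdot)|^{1/2}$ is real; this gives the $-2\pi i/3$ phase and the subscript $+1$ on the Airy functions, matching $\mathrm{Ai}_{\mp 1}$ with lower signs.

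For the properties of $G_{s}^{+}(z)$, I would extract these from (\ref{105c}) by inserting the formal Stirling expansions of $W(\tfrac{1}{2}u,0)$ and $W'(\tfrac{1}{2}u,0)$ coming from (\ref{15}), (\ref{16}), (\ref{stirling}), and combining with $G_{s,0}(z)=1/(z^{2}-1)$ and $G_{s,1}(z)=z/(z^{2}-1)$ at $s=0$, together with their higher-$s$ successors from (\ref{40}). Each $G_{s,R}$ is rational with poles only at $z=\pm 1$, so the same is true of $G_{s}^{+}(z)$; the $z=1$ poles cancel because $L_{W}^{+}(\tfrac{1}{2}u,\sqrt{\tfrac{1}{2}u}\,z)$ is entire in $\lambda$ by \cref{lem4.2} and the Airy contribution in (\ref{104a}) is itself analytic at $z=1$, forcing the algebraic part to be analytic there by \cref{thm:nopoles} applied in an annulus around $z=1$. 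The residual poles at $z=-1$ are not cancelled because $z=-1$ sits on the excluded cut. The main obstacle is the clean bookkeeping of the phase and gamma-function constants to see that the prefactor reduces to the real-modulus form $|\Gamma(\tfrac{1}{2}+\tfrac{1}{2}iu)|^{1/2}$ with the sharp exponential decay rate $e^{-3\pi u/8}$; once that identity is in hand, everything else is substitution and the already-established expansions of \cref{sec3}.
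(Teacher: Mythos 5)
Your proposal is correct and follows essentially the same route as the paper: substitute (\ref{41a}) (equivalently (\ref{30})) into (\ref{103}) and (\ref{104}), use (\ref{102}) together with (\ref{92}) to collapse the prefactor to $\sqrt{2}\pi^{3/4}u^{-1/12}e^{-3\pi u/8\pm 2\pi i/3}\vert\Gamma(\tfrac12+\tfrac12 iu)\vert^{1/2}$, and then invoke \cref{thm:nopoles} in an annulus about $z=1$ (where the Airy term is exponentially small) to get analyticity of $G_{s}^{+}(z)$ there and validity of the expansion through the turning point. The phase and modulus bookkeeping you supply ($\tfrac{5}{8}\pi+\tfrac{1}{24}\pi=\tfrac{2}{3}\pi$, $e^{-\pi u/4}e^{-\pi u/8}=e^{-3\pi u/8}$, and $e^{i\chi(u)}\{\Gamma\}^{1/2}\to\vert\Gamma\vert^{1/2}$ via reflection/conjugation) is exactly the detail the paper leaves implicit, and it checks out.
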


\begin{proof}
The expansions (\ref{104a}) follow from (\ref{30}), (\ref{41a}), (\ref{103}), (\ref{104}), and (\ref{102}). Next one can show for $|\zeta|<\rho$, where $\rho>0$ is sufficiently small, that the second term on the RHS of (\ref{104a}), involving the Airy function and its derivative, is exponentially small for large $u$. Now $\zeta=0$ corresponds to $z=1$, and therefore (\ref{95}) holds for $0<\rho_{1}<|z-1|<\rho_{2}$ for certain constants $\rho_{1}$ and $\rho_{2}$. We then apply \cref{thm:nopoles} to establish the analyticity of $G_{s}^{+}(z)$ at $z=1$, as well as the validity of (\ref{104a}) in a neighborhood of $z=1$.
\end{proof}

Let us now turn our attention to $L_{W}^{-}(\frac{1}{2}u,\sqrt{\frac{1}{2}u}\,z)$. Then, similarly to (\ref{105c}) we define coefficients $G_{s}^{-}(z)$ via the formal expansion
\begin{multline} \label{105}
- \frac{2}{u}W'\left(\tfrac{1}{2}u,0\right)
\sum\limits_{s=0}^{\infty} \frac{G_{s,0}(z)}{u^{2s}} 
+\sqrt{\frac{2}{u}}W\left(\tfrac{1}{2}u,0\right)
\sum\limits_{s=0}^{\infty} \frac{G_{s,1}(z)}{u^{2s}}
\\ 
- \frac{ J(u,z)}{u^{3/4}\zeta }\left(\frac{8\zeta}{1-z^2}\right)^{1/4}
 \sim 
\frac{2^{1/4}}{u^{3/4}}\sum\limits_{s=0}^{\infty} 
\frac{G_{s}^{-}(z)}{u^{2s}},
\end{multline}
where $J(u,z)$ is a function (analytic at $z=1$) having the asymptotic expansion (\ref{36}).

The first two coefficients are found to be
\begin{equation} \label{107}
G_{0}^{-}(z)=\frac{1}{z-1}
+\frac {\sqrt{2}\phi(z) }{\zeta},
\end{equation}
and
\begin{multline} \label{108}
G_{1}^{-}(z)=\frac{(z-3)\left( z^{2}-2z+5 \right) }{8 (z-1)^{4}}
+\frac {\phi(z) \left\{ \tilde{\mathcal{E}}_{1}^{2}(z)
+2 \tilde{\mathcal{E}}_{2}(z)\right\} }{\sqrt {2}\zeta} \\
-\frac{\sqrt {2}\phi(z)
\mathcal{E}_{1}(z)}{\zeta^{5/2}}
+\frac{2^{3/2}\phi(z)}{\zeta^{4}}.
\end{multline}
Unlike $G_{s}^{+}(z)$ these are not rational functions of $z$, however from \cref{thm:nopoles} one can show that they also have removable singularities at $z=1$ ($\zeta=0$), and so can be considered analytic at $z=1$. For example, using Maple we find as $z \rightarrow 1$ for the first two
\begin{equation} \label{110}
G_{0}^{-}(z)=\frac{1}{5}
-\frac {89}{1400}(z-1)
+\mathcal{O}\left\{(z-1)^2\right\},
\end{equation}
and
\begin{equation} \label{111}
G_{1}^{-}(z)=\frac{586349}{8624000}
-\frac {221592597}{5605600000}(z-1)
+\mathcal{O}\left\{(z-1)^2\right\}.
\end{equation}

From (\ref{90}), (\ref{nu}), (\ref{93}), (\ref{94}), and (\ref{105}) we now obtain our desired result.
\begin{theorem} \label{thm4.5}
As $u \rightarrow \infty$
\begin{multline} \label{106}
L_{W}^{-}\left(\tfrac{1}{2}u,\sqrt{\tfrac{1}{2}u}\,z\right)  \sim 
\frac{2^{1/4}}{u^{3/4}}\sum\limits_{s=0}^{\infty} 
\frac{G^{-}_{s}(z)}{u^{2s}}
\\ 
+\frac{2^{3/4}\pi}{u^{1/12}} 
\left\{ \mathrm{Hi}\left(u^{2/3}\zeta  \right)\mathcal{A}(u,z)  +  \mathrm{Hi}^{\prime}\left(u^{2/3}\zeta \right)\mathcal{B}(u,z)
\right\},
\end{multline}
uniformly for all $z$ except those within a distance of $\delta$ of the cut $-\infty < z \leq -1$. Here $\mathrm{Hi}(z)$ is the Scorer function defined by (\ref{eqHi}).
\end{theorem}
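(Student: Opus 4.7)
The plan is to specialize the master formula (\ref{90})---obtained by substituting Theorem \ref{thm3} into (\ref{63})---to the minus case, with the key input being the large-$u$ estimates (\ref{93})-(\ref{94}) for $2u\gamma_{0}(u)W'(\tfrac{1}{2}u,0)$ and $\sqrt{2}u^{3/2}\gamma_{1}(u)W(\tfrac{1}{2}u,0)$. The essential structural point is that in
\begin{equation*}
\nu^{\pm}(u)=\pm 2u\gamma_{0}(u)W'(\tfrac{1}{2}u,0)+\sqrt{2}u^{3/2}\gamma_{1}(u)W(\tfrac{1}{2}u,0)
\end{equation*}
the two contributions cancel to all orders for the upper sign (that is how Theorem \ref{thm4.4} comes out), but for the lower sign they reinforce, giving
\begin{equation*}
\nu^{-}(u)=2^{3/4}u^{-1/12}\bigl\{1+\mathcal{O}(u^{-n})\bigr\}
\end{equation*}
for arbitrary positive integer $n$. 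Substituting this into the $\pi\{\mathrm{Hi}(\cdot)\mathcal{A}+\mathrm{Hi}'(\cdot)\mathcal{B}\}$ bracket of (\ref{90}) immediately reproduces the Scorer block $2^{3/4}\pi u^{-1/12}\{\mathrm{Hi}(u^{2/3}\zeta)\mathcal{A}+\mathrm{Hi}'(u^{2/3}\zeta)\mathcal{B}\}$ of (\ref{106}), the $\mathcal{O}(u^{-n})$ correction being absorbed into the asymptotic series (\ref{31}) and (\ref{32}) of $\mathcal{A}$ and $\mathcal{B}$.

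Next I would collect the non-Scorer contributions in (\ref{90}): the two series coming from the polynomial part of $\mathcal{G}_{0}$ and $\mathcal{G}_{1}$ via (\ref{eq35a}), together with the leftover $-\nu^{-}(u)J(u,z)\phi(z)/(u^{2/3}\zeta)$ piece arising from the subtracted $J$-term of those same expansions. Using $(\zeta/(1-z^{2}))^{1/4}=\phi(z)$ and $2^{3/4}\phi(z)=(8\zeta/(1-z^{2}))^{1/4}$, this combination matches verbatim the left-hand side of the defining expansion (\ref{105}), so its contribution is precisely $2^{1/4}u^{-3/4}\sum_{s}G_{s}^{-}(z)/u^{2s}$. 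Together with the Scorer block this gives (\ref{106}) throughout the region of validity of Theorem \ref{thm3}, i.e.\ for $z$ away from the cut $(-\infty,-1]$ and from a small neighborhood of the turning point $z=1$.

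The one step that requires more than symbol-chasing is extending the expansion to a neighborhood of $z=1$. Here the coefficients $G_{s}^{-}(z)$ displayed in (\ref{107})-(\ref{108}) are not rational; they contain powers of $\zeta^{-1}$ that look singular at the turning point. The plan is to observe that $\zeta(z)$ is analytic at $z=1$, that $\mathrm{Hi}$ is entire, and that $\mathcal{A}(u,z)$ and $\mathcal{B}(u,z)$ are analytic at $z=1$, so the Scorer block in (\ref{106}) is analytic there, while $L_{W}^{-}(\tfrac{1}{2}u,\sqrt{\tfrac{1}{2}u}z)$ is entire in $\lambda$ by \cref{lem4.2}. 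Since the asymptotic series $2^{1/4}u^{-3/4}\sum_{s}G_{s}^{-}(z)/u^{2s}$ equals the difference of these two analytic objects in an annulus $0<\rho_{1}<|z-1|<\rho_{2}$, \cref{thm:nopoles} forces each $G_{s}^{-}(z)$ to have at worst a removable singularity at $z=1$ and extends the expansion to a full disk around that point. I expect this turning-point argument to be the main obstacle, since the apparent $\zeta^{-k}$ poles in (\ref{107})-(\ref{108}) cannot be eliminated by direct algebraic simplification and must be handled via the no-poles theorem; once that is in place, uniformity over the remainder of the half-plane and its reflection is immediate from Theorem \ref{thm3}.
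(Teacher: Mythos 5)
Your proposal is correct and follows essentially the same route as the paper, whose entire proof is the one-line citation of (\ref{90}), (\ref{nu}), (\ref{93}), (\ref{94}), and (\ref{105}): the reinforcement $\nu^{-}(u)=2^{3/4}u^{-1/12}\{1+\mathcal{O}(u^{-n})\}$ produces the Scorer block, and the remaining terms of (\ref{90}) are by construction the left-hand side of (\ref{105}). Your treatment of the turning point via \cref{thm:nopoles} applied to $L_{W}^{-}$ minus the (analytic, but not exponentially small) Scorer block is exactly the argument the paper relies on in the remark preceding the theorem, spelled out slightly more carefully.
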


The following theorem provides asymptotic expansions uniformly valid in a region containing the interval $-\infty < z \leq -1$, which was excluded from \cref{thm4.4,thm4.5}.
\begin{theorem} \label{thm4.6}
Let $z$ lie on, or to the right of the level curves $\Re(\xi)=0$ in the first and fourth quadrants, emanating from $z=1$, as depicted in \cref{fig:fig1}, where $\xi$ is given by (\ref{zeta1}). Then as $u \rightarrow \infty$ we have uniformly for $z$ in this unbounded closed region
\begin{multline} \label{120}
L_{W}^{+}\left(\tfrac{1}{2}u,-\sqrt{\tfrac{1}{2}u}\,z\right) \sim -L_{W}^{-}\left(\tfrac{1}{2}u,\sqrt{\tfrac{1}{2}u}\,z\right) \\
+\sqrt{2}\pi^{3/4} u^{-1/12}
e^{\pi u/8}
{\left\vert
\Gamma\left( \tfrac{1}{2}+\tfrac {1}{2}iu \right)
\right\vert} ^{1/2} \\
\times
\left\{\mathrm{Bi}\left(u^{2/3}\zeta \right) 
\mathcal{A}(u,z) +\mathrm{Bi}^{\prime }\left(u^{2/3}\zeta\right) \mathcal{B}(u,z)\right\},
\end{multline}
and
\begin{multline} \label{122}
L_{W}^{-}\left(\tfrac{1}{2}u,-\sqrt{\tfrac{1}{2}u}\,z\right) \sim -L_{W}^{+}\left(\tfrac{1}{2}u,\sqrt{\tfrac{1}{2}u}\,z\right) \\
+2\sqrt{2}\pi^{3/4} u^{-1/12}
e^{5\pi u/8}
{\left\vert
\Gamma\left( \tfrac{1}{2}+\tfrac {1}{2}iu \right)
\right\vert} ^{1/2} \\
\times
\left\{\mathrm{Ai}\left(u^{2/3}\zeta \right) 
\mathcal{A}(u,z) +\mathrm{Ai}^{\prime }\left(u^{2/3}\zeta\right) \mathcal{B}(u,z)\right\}.
\end{multline}
\end{theorem}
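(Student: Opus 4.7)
My plan is to extract (\ref{120}) and (\ref{122}) from the exact representation (\ref{63}) together with parity connection formulas for the inhomogeneous functions $W_R^{(0,3)}(a,z)$ and the known uniform expansion (\ref{41a}) for the homogeneous solutions $W_{0},W_{3}$. Since $W_0^{(0,3)}(a,z)$ and $W_0^{(0,3)}(a,-z)$ both satisfy (\ref{02d}) with $R=0$ and right-hand side $1$, they differ by a homogeneous solution, and that difference is precisely (\ref{22t}). Because $W_1^{(0,3)}(a,z)$ satisfies (\ref{02d}) with $R=1$ and right-hand side $z$, the sum $W_1^{(0,3)}(a,-z)+W_1^{(0,3)}(a,z)$ is homogeneous, and matching recessive behavior at $z=e^{\pm\pi i/4}\infty$ (along the lines of the derivation of (\ref{22t}) from Dunster:2021:UAP) gives
\begin{equation*}
W_1^{(0,3)}(a,-z)=-W_1^{(0,3)}(a,z)+\alpha_1(a)W_0(a,z)+\beta_1(a)W_3(a,z),
\end{equation*}
for explicit $\alpha_1(a),\beta_1(a)$ of the same gamma-ratio type as those in (\ref{22t}).

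Inserting these two parity formulas into (\ref{63}) with $\lambda$ replaced by $-\lambda$, and regrouping the $W_R^{(0,3)}(a,2\lambda)$ terms so as to reconstruct $\mp L_W^{\mp}(a,\lambda)$ via (\ref{63}) itself, yields the exact identities
\begin{equation*}
L_W^{+}(a,-\lambda)+L_W^{-}(a,\lambda)=A(a)\,W_0(a,2\lambda)+\overline{A(a)}\,W_3(a,2\lambda),
\end{equation*}
\begin{equation*}
L_W^{-}(a,-\lambda)+L_W^{+}(a,\lambda)=A'(a)\,W_0(a,2\lambda)+\overline{A'(a)}\,W_3(a,2\lambda),
\end{equation*}
where $A(a),A'(a)$ are explicit linear combinations of $W(a,0)$ and $W'(a,0)$ with the connection coefficients from (\ref{22t}) and its $R=1$ analogue. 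That the $W_3$ coefficients are complex conjugates of the $W_0$ coefficients follows from $W_3(a,x)=\overline{W_0(a,x)}$ and that $L_W^{\pm}(a,\lambda)$ is real for real $\lambda$.

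With $a=\tfrac{1}{2}u$ and $2\lambda=\sqrt{2u}\,z$, I apply (\ref{41a}) to rewrite $AW_0+\overline{A}W_3$ as a linear combination of the two functions $w_{\mp 1}(u,z)$ in (\ref{30}). The DLMF connection relations
\begin{equation*}
\mathrm{Ai}_{\mp 1}(\zeta_0)=\tfrac{1}{2}e^{\pm i\pi/3}\bigl[\mathrm{Ai}(\zeta_0)\mp i\,\mathrm{Bi}(\zeta_0)\bigr]
\end{equation*}
then show that the phase ratio $Ae^{i\chi(u)}:\overline{A}e^{-i\chi(u)}$, controlled by Stirling's formula (\ref{stirling}) applied to the gamma quotients in $A(a)$ and $A'(a)$, determines whether the $\mathrm{Ai}_{1}$ and $\mathrm{Ai}_{-1}$ contributions collapse onto a pure $\mathrm{Bi}$ or a pure $\mathrm{Ai}$ approximant; the same collapse occurs simultaneously in the derivative terms involving $\mathcal{B}(u,z)$. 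The algebra shows that the first identity collapses onto $\mathrm{Bi}$, producing (\ref{120}), while the second collapses onto $\mathrm{Ai}$, producing (\ref{122}). The explicit prefactors $e^{\pi u/8}$ and $e^{5\pi u/8}$ emerge from combining the $e^{-\pi u/8}$ in (\ref{41a}) with $e^{\pi a}=e^{\pi u/2}$ and $e^{\pi a/2}=e^{\pi u/4}$ factors appearing in the connection coefficients, together with the estimates (\ref{93})-(\ref{94}) and a Stirling simplification of the remaining gamma quotients to $|\Gamma(\tfrac{1}{2}+\tfrac{1}{2}iu)|^{1/2}$.

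The main obstacle is precisely this bookkeeping: one must verify that the ratio $A:\overline{A}$ (and independently $A':\overline{A'}$) produced by combining (\ref{22t}) with the derived $R=1$ parity formula is the exact one that annihilates the complementary Airy contribution in each case, leaving pure $\mathrm{Bi}$ in (\ref{120}) and pure $\mathrm{Ai}$ in (\ref{122}) with the stated constants. Once this exact phase alignment is confirmed, uniformity on and to the right of the curves $\Re(\xi)=0$ (depicted in \cref{fig:fig1}) is inherited directly from the uniformity of (\ref{41a}) on that closed region, and the excluded cut $-\infty<z\le -1$ is avoided by keeping the Airy argument $u^{2/3}\zeta$ well away from the branch locus.
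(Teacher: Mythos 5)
Your proposal is correct and follows essentially the same route as the paper: the parity relations you derive for $W_R^{(0,3)}(a,-z)$ are exactly the cited formula (\ref{112}) (Eq. (5.33) of the reference), your exact identities with conjugate coefficients are the paper's (\ref{113}) with $\hat{\mu}^{\pm}(u)$ as in (\ref{115})--(\ref{116}), and the collapse onto $\mathrm{Bi}$ and $\mathrm{Ai}$ via the Airy connection formulas together with (\ref{41a}) and Stirling is precisely how the paper obtains (\ref{120}) and (\ref{122}). The only residual work is the phase bookkeeping you flag, which the paper carries out in (\ref{117})--(\ref{118}) and which confirms the alignment up to relative errors $\mathcal{O}(e^{-\pi u})$.
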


\begin{remark}
In these the expansions of $L_{W}^{\pm}(\tfrac{1}{2}u,\sqrt{\tfrac{1}{2}u}\,z)$ given by \cref{thm4.4,thm4.5} can be used. Also, the Airy functions in both these expansions are exponentially large as $u \rightarrow \infty$ in the interior of the region of validity, except on the real axis where they are oscillatory for $1 \leq z < \infty$.
\end{remark}

\begin{proof}
From \cite[Eq. (5.33)]{Dunster:2021:UAP} we have for $R=0,1$
\begin{multline} \label{112}
W_{R}^{(0,3)}\left(\tfrac{1}{2}u,-\sqrt {2u} z\right)
=(-1)^{R}W_{R}^{(0,3)}\left(\tfrac{1}{2}u,\sqrt {2u} z\right)  \\
+\left\{(-1)^{R+1}-ie^{\pi u/2}
\right\}\mu_{R}(u)
W_{0}\left(\tfrac{1}{2}u,\sqrt {2u} z\right)  \\
+\left\{(-1)^{R+1}+ie^{\pi u/2}
\right\}\overline{\mu_{R}(u)}W_{3}\left(\tfrac{1}{2}u,\sqrt {2u} z\right),
\end{multline}
where $\mu_{0}(u)$ and $\mu_{1}(u)$ are given by (\ref{97}) and (\ref{98}), respectively. Thus inserting (\ref{112}) into (\ref{63}) and using (\ref{mu}) we obtain
\begin{multline} \label{113}
L_{W}^{\pm}\left(\tfrac{1}{2}u,-\sqrt{\tfrac{1}{2}u}\,z\right)  =-L_{W}^{\mp}\left(\tfrac{1}{2}u,\sqrt{\tfrac{1}{2}u}\,z\right)
+\hat{\mu}^{\pm}(u) W_{0}\left(\tfrac{1}{2}u,\sqrt {2u} z\right) \\
+ \overline{\hat{\mu}^{\pm}(u)} 
W_{3}\left(\tfrac{1}{2}u,\sqrt {2u} z\right),
\end{multline}
where
\begin{equation} \label{115}
\hat{\mu}^{+}(u)=
\tfrac{1}{2}\mu_{1}(u)W\left(\tfrac{1}{2}u,0\right)
-\mu_{0}(u)W'\left(\tfrac{1}{2}u,0\right)
-i e^{\pi u /2} \mu(u),
\end{equation}
and
\begin{equation} \label{116}
\hat{\mu}^{-}(u)=\mu(u)
- i e^{\pi u /2} \left\{
\tfrac{1}{2}\mu_{1}(u)W\left(\tfrac{1}{2}u,0\right)
-\mu_{0}(u)W'\left(\tfrac{1}{2}u,0\right) \right\}.
\end{equation}
We find from (\ref{15}), (\ref{16}), (\ref{reflect}), (\ref{97}), (\ref{98}), and (\ref{100}), for large $u$,
\begin{equation} \label{117}
\mu^{+}(u)=e^{\pi i /8}
\left(\frac{\pi e^{\pi u}}{2}\right)^{1/4}
\left\{\Gamma\left( \tfrac{1}{2}+\tfrac {1}{2}iu \right)
\right\}^{1/2}
\left\{1+\mathcal{O}\left(e^{-\pi u}\right)\right\},
\end{equation}
and
\begin{equation} \label{118}
\mu^{-}(u)=2^{3/4} \pi^{1/4} e^{-3 \pi i /8}  e^{3\pi u/4}
\left\{\Gamma\left( \tfrac{1}{2}+\tfrac {1}{2}iu \right)
\right\}^{1/2}
\left\{1+\mathcal{O}\left(e^{-\pi u}\right)\right\}.
\end{equation}

From (\ref{stirling}), (\ref{117}), and (\ref{118}) it can be shown that $\mu^{+}(u)=\mathcal{O}(e^{\pi u/8})$ and $\mu^{-}(u)=\mathcal{O}(e^{5\pi u/8})$ as $u \rightarrow \infty$. Now from \cite[Eq. 9.2.10]{NIST:DLMF}
\begin{equation} \label{119}
\mathrm{Bi}(z)=e^{-\pi i /6}\mathrm{Ai}_{1}(z)
+e^{\pi i/6}\mathrm{Ai}_{-1}(z),
\end{equation}
and so from this and (\ref{92}), (\ref{30}), (\ref{41a}), (\ref{113}), (\ref{115}), and (\ref{117}) we obtain (\ref{120}), provided $z$ lies in the stated closed region. To the left of the boundary curves this expansion cannot be used, since for large $u$ there is a severe cancellation of exponentially large terms on the RHS, whereas the LHS is bounded. However, this region is covered by \cref{thm4.4}.

Similarly, using \cite[Eq. 9.2.12]{NIST:DLMF}
\begin{equation} \label{121}
\mathrm{Ai}(z)=e^{\pi i /3}\mathrm{Ai}_{1}(z)
+e^{-\pi i/3}\mathrm{Ai}_{-1}(z),
\end{equation}
along with (\ref{92}), (\ref{30}), (\ref{41a}), (\ref{113}), (\ref{116}), and (\ref{118}) we obtain (\ref{122}). Unlike (\ref{120}) this is actually still valid in part of the region to the left of the boundary, but again we can use \cref{thm4.5} in this case.
\end{proof}

\section{Laplace Transform of \texorpdfstring{$U(a,t)$}{Lg}} 
\label{sec5}
Let us now consider
\begin{equation} \label{64}
L_{U}(a,\lambda)  =\int_{0}^{\infty} e^{-\lambda t} U(a,t)dt,
\end{equation}
which, like $L_{W}^{\pm}(a,\lambda)$, is an entire function of $\lambda$.

The procedures here are similar to those of \cref{sec4}, although a major difference is that we obtain a simpler slowly varying expansion for this Laplace transform that is valid in the whole right half $\lambda$ plane, and also in part of the left half-plane. To avoid introducing new terms we shall use the same notation in this section as the previous one, noting any differences of the functions and parameters involved.

Beginning with large $\lambda$ with $a$ fixed, similarly to \cref{thm4.1} we obtain an asymptotic expansion for $L_{U}(a,\lambda)$. The main difference here is that, unlike $W(a,\pm t)$, $U(a,t)$ is exponentially small in a sector containing the real $t$ axis: see (\ref{64a}). This allows us to apply a more general version of Watson's lemma for analytic integrands which are exponentially small in sectors containing the positive real $t$ axis \cite[Chap. 4, Thm. 3.3]{Olver:1997:ASF}. So from this and \cite[Eqs. 12.2.6 and 12.2.7]{NIST:DLMF} the following is obtained.

\begin{theorem} \label{thm5.1}
For fixed $a$ and $\lambda \rightarrow \infty$ with $|\arg(\lambda)| \leq \frac{3}{4}\pi - \delta$
\begin{equation} \label{67a}
L_{U}(a,\lambda) \sim
\frac{U(a,0)}{\lambda}
\sum_{ s=0}^{\infty} \frac{\alpha_{s}(a)}{\lambda^{2s}}
+\frac{U'(a,0)}{\lambda^{2}}
\sum_{ s=0}^{\infty} \frac{\beta_{s}(a)}{\lambda^{2s}},
\end{equation}
where $\alpha_{0}(a)=\beta_{0}(a)=1$, $\alpha_{1}(a)=\beta_{1}(a)=a$, and for $ s=0,1,2,\ldots$ 
\begin{equation}\label{67b}
\alpha_{s+2}=a\alpha_{s+1}+\tfrac{1}{2}(s+1)(2s+1)\alpha_{s},
\end{equation}
\begin{equation} \label{67c}
\beta_{s+2}=a\beta_{s+1}+\tfrac{1}{2}(s+1)(2s+3)\beta_{s},
\end{equation}
\begin{equation} \label{66}
U(a,0)=\frac{\sqrt{\pi}}{2^{\frac{1}{2}a
+\frac{1}{4}}\Gamma\left(\frac{1}{2}a+\frac{3}{4}\right)},
\end{equation}
and
\begin{equation} \label{67}
U'(a,0)=-\frac{\sqrt{\pi}}{2^{\frac{1}{2}a
-\frac{1}{4}}\Gamma\left(\frac{1}{2}a+\frac{1}{4}\right)}.
\end{equation}
\end{theorem}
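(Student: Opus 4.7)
The plan is to apply the analytic version of Watson's lemma (Olver, Chap. 4, Thm. 3.3) directly to the defining integral~(\ref{64}). Because $U(a,t)$ is an entire function of $t$ with a convergent Maclaurin expansion
\begin{equation*}
U(a,t)=\sum_{s=0}^{\infty}a_{s}(a)\,t^{s},
\end{equation*}
and, by (\ref{64a}), decays like $t^{-a-1/2}e^{-t^{2}/4}$ as $t\to\infty$ throughout $|\arg t|\le\tfrac{3}{4}\pi-\delta$, the standard hypotheses of the generalized Watson lemma are met. That lemma then supplies
\begin{equation*}
L_{U}(a,\lambda)\sim\sum_{s=0}^{\infty}\frac{a_{s}(a)\,s!}{\lambda^{s+1}}
\quad\bigl(\lambda\to\infty,\ |\arg\lambda|\le\tfrac{3}{4}\pi-\delta\bigr),
\end{equation*}
valid in this enlarged sector precisely because of the decay of $U(a,t)$ along rays into the right half $t$-plane (as opposed to the narrower $|\arg\lambda|\le\tfrac{1}{2}\pi-\delta$ one would obtain for $W(a,\pm t)$, which are merely oscillatory on the real axis).

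Next I would extract the recurrences for $\alpha_{s}$ and $\beta_{s}$ by inserting the Maclaurin series into the parabolic cylinder equation~(\ref{UEQ}). Equating coefficients of $t^{s}$ yields the three-term relation
\begin{equation*}
(s+2)(s+1)\,a_{s+2}(a)=a\,a_{s}(a)+\tfrac{1}{4}\,a_{s-2}(a).
\end{equation*}
Splitting into even and odd indices and rescaling through
\begin{equation*}
a_{2s}(a)=\frac{U(a,0)\,\alpha_{s}(a)}{(2s)!},\qquad a_{2s+1}(a)=\frac{U'(a,0)\,\beta_{s}(a)}{(2s+1)!},
\end{equation*}
the even recurrence becomes $\alpha_{s+2}=a\,\alpha_{s+1}+\tfrac{1}{2}(s+1)(2s+1)\alpha_{s}$ and the odd one becomes $\beta_{s+2}=a\,\beta_{s+1}+\tfrac{1}{2}(s+1)(2s+3)\beta_{s}$, with the initial values $\alpha_{0}=\beta_{0}=1$ and $\alpha_{1}=\beta_{1}=a$ read off directly from the recursion itself (since $a_{2}=\tfrac{1}{2}aU(a,0)/2!$ and $a_{3}=\tfrac{1}{6}aU'(a,0)/3!$, up to the factor absorbed by the scaling). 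Term-by-term integration via $\int_{0}^{\infty}e^{-\lambda t}t^{2s}\,dt=(2s)!/\lambda^{2s+1}$ and the analogue for odd powers then converts Watson's lemma into precisely the form~(\ref{67a}).

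The initial values $U(a,0)$ and $U'(a,0)$ at (\ref{66})–(\ref{67}) are just quotations of \cite[Eqs. 12.2.6 and 12.2.7]{NIST:DLMF}. The only genuinely non-routine step is the sector-of-validity claim; the main obstacle is verifying the hypotheses of the analytic Watson lemma on a sector of opening $\tfrac{3}{2}\pi$ about the positive real axis, which reduces to invoking (\ref{64a}) to certify the exponential smallness of $U(a,t)$ on any ray $\arg t=\theta$ with $|\theta|<\tfrac{3}{4}\pi$ and checking absolute convergence of $\int_{0}^{\infty e^{i\theta}}e^{-\lambda t}U(a,t)\,dt$ when $\Re(\lambda e^{i\theta})>0$. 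Everything else is bookkeeping: substitution, term-by-term integration, and a straightforward induction on the recurrences.
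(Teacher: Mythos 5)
Your overall route is exactly the paper's: the paper omits this proof, stating only that it follows the proof of \cref{thm4.1} via the generalized Watson's lemma for analytic integrands \cite[Chap.\ 4, Thm.\ 3.3]{Olver:1997:ASF} together with \cite[Eqs.\ 12.2.6, 12.2.7]{NIST:DLMF}, and your derivation of the recurrences (\ref{67b})--(\ref{67c}) by feeding the Maclaurin series into (\ref{UEQ}) and rescaling the even and odd coefficients is correct (modulo a harmless arithmetic slip in the parenthetical values of $a_{2}$ and $a_{3}$).

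There is, however, one concrete error in the step you yourself single out as the non-routine one. You justify the enlarged sector by claiming that (\ref{64a}) certifies exponential smallness of $U(a,t)$ on every ray $\arg t=\theta$ with $|\theta|<\tfrac34\pi$. That is false: for $\tfrac14\pi<|\theta|<\tfrac34\pi$ one has $\Re(t^{2})<0$, so $e^{-t^{2}/4}$ and hence $U(a,t)$ are exponentially \emph{large} on those rays, and the rotated integral $\int_{0}^{\infty e^{i\theta}}e^{-\lambda t}U(a,t)\,dt$ diverges for every $\lambda$. The hypotheses of Olver's theorem (growth at most $\mathcal{O}(e^{\sigma|t|})$) therefore hold only in the $t$-sector $|\arg t|\le\tfrac14\pi$, where $U(a,t)$ genuinely decays like $e^{-\Re(t^{2})/4}$. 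That smaller $t$-sector is nonetheless exactly what is needed: rotating the contour by any $\theta$ with $|\theta|<\tfrac14\pi$ and applying the ordinary Watson lemma in the variable $\mu=\lambda e^{i\theta}$ yields validity for $|\arg\lambda+\theta|\le\tfrac12\pi-\delta$, and sweeping $\theta$ over $(-\tfrac14\pi,\tfrac14\pi)$ covers $|\arg\lambda|\le\tfrac34\pi-\delta$ as claimed. In short, the half-opening $\tfrac34\pi$ belongs to the $\lambda$-sector, not the $t$-sector; you have conflated the two, and the argument as written would fail if carried out literally, though the theorem itself is unaffected once the sectors are sorted out.
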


We omit the proof since it is very similar to that of \cref{thm4.1}, although we note that the $\lambda$ sector of asymptotic validity in \cref{thm5.1} is larger.

Next we express $L_{U}(a,\lambda)$ in terms of certain functions $U_{R}^{(j,k)}(a,z)$ ($(j,k)=(0,1)$, $(0,3)$ and $(1,3)$). These are solutions of the inhomogeneous version of (\ref{UEQ}) with the RHS given by $z^R$ $(R=0,1)$. They have the fundamental properties at $z=\infty$ that $U_{R}^{(1,3)}(a,z)$ is bounded when $\frac{1}{4}\pi \leq |\arg(\pm z)|\leq\frac{3}{4}\pi$, $U_{R}^{(0,1)}(a,z)$ is bounded when $-\frac{1}{4}\pi \leq \arg(z)\leq\frac{3}{4}\pi$, and  $U_{R}^{(0,3)}(a,z)$ is bounded when $-\frac{3}{4}\pi \leq \arg(z)\leq\frac{1}{4}\pi$.

In \cite{Dunster:2021:UAP} they are defined similarly to (\ref{03}), namely
\begin{multline} \label{69}
 U_{R}^{(1,3)}(a,z)
=\frac{i\Gamma\left(\tfrac{1}{2}-a\right)}
{\sqrt{2\pi}} \left[U(-a,iz)
 \int_{i\infty}^z \right.
 t^{R}U(-a,-it)dt \\
\left. -U(-a,-iz) \int_{-i\infty}^z 
 t^{R}U(-a,it)dt \right],
\end{multline}
\begin{multline} \label{70s}
 U_{R}^{(0,1)}(a,z)
 =e^{(\frac{1}{2}a-\frac{1}{4})\pi i}
 \left[U(-a,-iz) \int_{\infty}^z \right.
 t^{R}U(a,t)dt \\
\left. -U(a,z) \int_{i\infty}^z 
 t^{R}U(-a,-it)dt  \right],
\end{multline}
and $U_{R}^{(0,3)}(a,z)$ be given by (\ref{70s}) with $i$ replaced by $-i$.

It turns out $L_{U}(a,\lambda)$ can be expressed in terms of any of these three functions (with $a$ replaced by $-a$), specifically we have the following.
\begin{theorem}
For $(j,k)=(0,1)$, $(0,3)$ or $(1,3)$
\begin{equation} \label{68}
L_{U}(a,\lambda)  =-U'(a,0) U_{0}^{(j,k)}(-a,2\lambda)
-\tfrac{1}{2}U(a,0) U_{1}^{(j,k)}(-a,2\lambda).
\end{equation}
\end{theorem}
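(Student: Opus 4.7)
The plan is to imitate the derivation of formula \eqref{63} for $L_{W}^{\pm}(a,\lambda)$: first establish that $L_{U}(a,\lambda)$ satisfies an inhomogeneous second-order ODE in $\lambda$ whose homogeneous part matches the defining equation of the $U_{R}^{(j,k)}(-a,\cdot)$ family, then identify the right-hand side of \eqref{68} as a particular solution, and finally force the homogeneous constants to vanish using the asymptotic behavior from \cref{thm5.1}.

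The ODE is obtained exactly as in \cref{lem4.2}. Assuming temporarily that $\Re(\lambda)>0$ so that differentiation under the integral in \eqref{64} is legal, I would compute $\partial^{2}L_{U}/\partial\lambda^{2}=\int_{0}^{\infty}t^{2}e^{-\lambda t}U(a,t)\,dt$, use \eqref{UEQ} to replace $t^{2}U(a,t)$ by $4U''(a,t)-4aU(a,t)$, and then integrate by parts twice. The boundary contributions at $t=\infty$ vanish by the exponential decay in \eqref{64a}, while those at $t=0$ produce $U(a,0)$ and $U'(a,0)$ from \eqref{66} and \eqref{67}. The outcome is
\[
\frac{\partial^{2}L_{U}(a,\lambda)}{\partial\lambda^{2}}=4(\lambda^{2}-a)L_{U}(a,\lambda)-4\lambda U(a,0)-4U'(a,0),
\]
which is entire in $\lambda$ and hence holds throughout $\mathbb{C}$ by analytic continuation.

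Rewriting this ODE in the variable $z=2\lambda$ gives $d^{2}L_{U}/dz^{2}-(\tfrac{1}{4}z^{2}-a)L_{U}=-U'(a,0)-\tfrac{1}{2}U(a,0)\,z$, which is precisely the inhomogeneous form of \eqref{UEQ} with $a$ replaced by $-a$ and with forcing equal to a linear combination of $z^{0}$ and $z^{1}$. Since by definition each $U_{R}^{(j,k)}(-a,z)$ with $R=0,1$ satisfies the same equation with right-hand side $z^{R}$, for every admissible pair $(j,k)$ the combination on the right of \eqref{68} is a particular solution. The general solution is therefore this particular solution plus $B_{1}(a)H_{1}(\lambda)+B_{2}(a)H_{2}(\lambda)$, where $\{H_{1},H_{2}\}$ is any fundamental pair for the homogeneous equation $y''-(\tfrac{1}{4}z^{2}-a)y=0$; for concreteness one may take $H_{1}(\lambda)=U(-a,2\lambda)$ paired with $H_{2}(\lambda)=U(a,2i\lambda)$ or $U(a,-2i\lambda)$, depending on the subsector under consideration.

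The final step is to show $B_{1}(a)=B_{2}(a)=0$, and this is where the main care is required. From \cref{thm5.1} we have $L_{U}(a,\lambda)=\mathcal{O}(\lambda^{-1})$ as $\lambda\to\infty$ in the wide sector $|\arg\lambda|\leq\tfrac{3}{4}\pi-\delta$, and for each admissible $(j,k)$ the associated particular solution decays algebraically to zero in the subsector of this strip on which $U_{R}^{(j,k)}(-a,\cdot)$ is recessive. I would then proceed exactly as in the two-ray argument immediately after \eqref{63a}: letting $\lambda\to\infty$ along a ray inside that subsector on which $H_{1}$ is exponentially large but $H_{2}$ exponentially small forces $B_{1}(a)=0$, and letting $\lambda\to\infty$ along a second ray with the roles reversed forces $B_{2}(a)=0$. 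The hard part will be the bookkeeping of which rays to pick for each $(j,k)$; this is possible because the homogeneous equation has Stokes sectors of aperture only $\tfrac{\pi}{2}$ while the sector of validity of \cref{thm5.1} has aperture $\tfrac{3}{2}\pi-2\delta$, so the wide sector straddles enough Stokes lines that two such rays can always be located within the recessive subsector of any of the three choices. Applying the argument for each $(j,k)\in\{(0,1),(0,3),(1,3)\}$ separately then yields \eqref{68} in all three forms.
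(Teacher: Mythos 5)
Your proposal is correct and follows essentially the same route as the paper: derive the ODE $\partial_\lambda^2 L_U = 4(\lambda^2-a)L_U - 4U'(a,0) - 4\lambda U(a,0)$ exactly as in \cref{lem4.2}, recognize the right-hand side of (\ref{68}) as a particular solution of the corresponding inhomogeneous form of (\ref{UEQ}) with $a\to-a$ and $z=2\lambda$, and kill the homogeneous constants by sending $\lambda\to\infty$ along suitably chosen rays inside $|\arg\lambda|\le\tfrac{3}{4}\pi-\delta$ where \cref{thm5.1} guarantees boundedness. Your observation that the $\tfrac{3}{2}\pi$ aperture of that sector is wide enough to find, for each choice of $(j,k)$, a ray on which each homogeneous solution is dominant is precisely the content of the paper's terser ``letting $\lambda\to\infty$ in various appropriate directions.''
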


\begin{proof}
Similarly to \cref{lem4.2} we find from (\ref{64})
\begin{equation} \label{65}
\frac{\partial^2  L_{U}(a,\lambda) }{\partial \lambda^2}
 =4\left(\lambda^2-a\right)L_{U}(a,\lambda)- 4U'(a,0)-4\lambda U(a,0).
\end{equation}
This is an inhomogeneous version of (\ref{UEQ}) with $z$ replaced by $2\lambda$ and $a$ replaced by $-a$. Now for $(j,k)=(0,1), (0,3)$ or $(1,3)$, and constants $A_{j}(a)$ and $A_{k}(a)$, the solution $L_{U}(a,\lambda)$ can be expressed in the form
\begin{multline} \label{68a}
L_{U}(a,\lambda)  =-U'(a,0) U_{0}^{(j,k)}(-a,2\lambda)
-\tfrac{1}{2}U(a,0) U_{1}^{(j,k)}(-a,2\lambda) \\
 +A_{j}(a)U_{j}(-a,2\lambda)+A_{k}(a)U_{k}(-a,2\lambda),
\end{multline}
where (analogously to (\ref{02b}))
\begin{equation} \label{68b}
U_{j}(a,z)
=U\left((-1)^{j}a,(-i)^{j}z\right) \quad (j=0,1,3),
\end{equation}
are solutions of (\ref{UEQ}) that are exponentially small at infinity in the sectors \linebreak $|\arg((-i)^{j}z)| <\frac{1}{4}\pi$ (see (\ref{64a})). Then by letting $\lambda \rightarrow \infty$ in various appropriate directions, and noting that from \cref{thm5.1} $L_{U}(a,\lambda)$ is bounded for $|\arg(\lambda)| \leq \frac{3}{4}\pi - \delta$, we deduce that $A_{j}(a)=A_{k}(a)=0$ in all three cases, and the result follows.
\end{proof}

\begin{figure}[hthp]
  \centering
  \includegraphics[trim={0 100 0 0},width=0.8\textwidth,keepaspectratio]{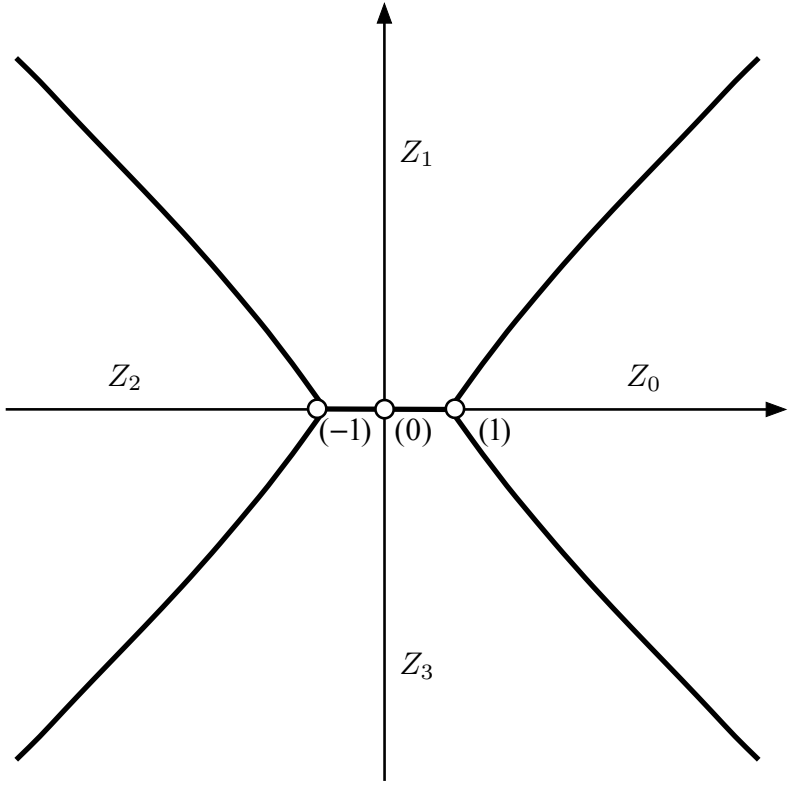}
  \caption{Domains $Z_{j}$ ($j=0,1,2,3$)}
  \label{fig:fig2}
\end{figure}

We again define $z$ by (\ref{lambda}), and with this introduce closed regions $Z_{j}$ ($j=0,1,2,3$) as depicted in \cref{fig:fig2}. In this the boundaries (thick lines) comprise the interval $-1 \leq z \leq 1$ and the curves emanating from $z= \pm 1$ given by $\Re(\xi)=\mathrm{constant}$, where instead of (\ref{zeta1}) we have
\begin{equation} \label{xxi}
\xi = \int_{1}^{z}\left(t^2-1\right)^{1/2} dt=\tfrac{1}{2}z\sqrt {z^{2}-1}-\tfrac{1}{2}\ln\left( z+\sqrt {z^{2}-1} \right),
\end{equation}
which is positive for $1<z<\infty$ and a continuous function of $z$ for $z \in \mathbb{C} \setminus (-\infty,1]$. Further define the (open) subset $Z^{(j,k)}$ to be the interior of $Z_{j} \cup Z_{k}$ ($j \neq k$).

Next, in place of (\ref{39}) and (\ref{40}), define coefficients
\begin{equation} \label{73}
G_{0,R}(z)=-z^{R}/\left(z^{2}-1\right),
\end{equation}
and
\begin{equation} \label{74}
G_{s+1,R}(z)=G''_{s,R}(z)/\left(z^{2}-1\right)
\quad (s=0,1,2,\ldots ).
\end{equation}
Then, with the aid of (\ref{stirling}), (\ref{66}), (\ref{67}), (\ref{73}), and (\ref{74}) we define a sequence of coefficients $\hat{G}_{s}(z)$ ($s=0,1,2,\ldots$) via the formal expansion (cf. (\ref{105c}))
\begin{multline} \label{209bb}
- \frac{2}{u}U'\left(\tfrac{1}{2}u,0\right)
\sum\limits_{s=0}^{\infty} \frac{G_{s,0}(z)}{u^{2s}} 
-\sqrt{\frac{2}{u}}U\left(\tfrac{1}{2}u,0\right)
\sum\limits_{s=0}^{\infty} \frac{G_{s,1}(z)}{u^{2s}}
\\ 
\sim \frac{2^{1/4}}{u^{3/4}}
\left(\frac{2e}{u}\right)^{u/4}
\sum\limits_{s=0}^{\infty} 
\frac{\hat{G}_{s}(z)}{u^{2s}}.
\end{multline}
The first three are found to be
\begin{equation} \label{209c}
\hat{G}_{0}(z)=\frac{1}{1+z},
\end{equation}
\begin{equation} \label{209e}
\hat{G}_{1}(z)=\frac{1}{24(1+z)},
\end{equation}
and
\begin{equation} \label{209f}
\hat{G}_{2}(z)=
-\frac{143z^{3}+717z^{2}+1581z+2159}{1152(1+z)^{4}}.
\end{equation}

Our main result then reads as follows.
\begin{theorem} \label{thm5.3}
The coefficients $\hat{G}_{s}(z)$ are rational functions whose only poles are at $z=-1$, and as $u \rightarrow \infty$
\begin{equation} \label{210f}
L_{U}\left(\tfrac{1}{2}u,\sqrt{\tfrac{1}{2}u}\,z\right)
\sim  \frac{2^{1/4}}{u^{3/4}}
\left(\frac{2e}{u}\right)^{u/4}
\sum\limits_{s=0}^{\infty} 
\frac{\hat{G}_{s}(z)}{u^{2s}},
\end{equation}
for $z$ in the interior of $Z_{0} \cup Z_{1} \cup Z_{3}$, uniformly for all points in this set whose distance from its boundary is greater than or equal to $\delta$.
\end{theorem}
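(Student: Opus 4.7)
The plan is to combine the exact representation \eqref{68} with the uniform asymptotic expansions of the three families $U_{R}^{(0,1)}$, $U_{R}^{(1,3)}$, $U_{R}^{(0,3)}$ established in \cite{Dunster:2021:UAP}. In complete parallel with \cref{thm3} for $W_{R}^{(0,3)}$, each of these expansions splits as a slowly varying algebraic series in inverse powers of $u^{2}$ plus a transcendental remainder built from Airy or Scorer approximants, and the transcendental remainder is exponentially subdominant precisely in the open set $Z^{(j,k)}$ where the corresponding $U_{R}^{(j,k)}$ is the recessive solution. Choosing $(j,k)=(0,1)$ for $z\in Z^{(0,1)}$, $(1,3)$ for $z\in Z^{(1,3)}$, and $(0,3)$ for $z\in Z^{(0,3)}$ covers the entire interior of $Z_{0}\cup Z_{1}\cup Z_{3}$, and on the overlaps the algebraic tails must coincide by the uniqueness of asymptotic expansions.

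Substituting the surviving algebraic tails into \eqref{68} and inserting the explicit values \eqref{66}, \eqref{67} of $U(\tfrac{1}{2}u,0)$ and $U'(\tfrac{1}{2}u,0)$, one obtains exactly the left-hand side of the formal identity \eqref{209bb}, with the $G_{s,R}(z)$ defined by \eqref{73}, \eqref{74}. The common large-$u$ prefactor $u^{-3/4}(2e/u)^{u/4}$ on the right of \eqref{209bb} is produced by applying Stirling's formula \eqref{stirling} to the gamma functions $\Gamma(\tfrac{u}{4}+\tfrac{1}{4})$ and $\Gamma(\tfrac{u}{4}+\tfrac{3}{4})$; dividing it out and regrouping the resulting double series into a single series in $u^{-2}$ defines $\hat{G}_{s}(z)$ and yields \eqref{210f}.

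For the structural claim, each $G_{s,R}(z)$ is rational with poles only at $z=\pm 1$, hence a priori so is $\hat{G}_{s}(z)$. The cancellation of the potential pole at $z=1$ is forced by the leading Stirling ratio $U'(\tfrac{1}{2}u,0)/U(\tfrac{1}{2}u,0)\sim -\sqrt{u/2}$, which makes the bracket $\tfrac{2}{u}U'(\tfrac{1}{2}u,0)+z\sqrt{\tfrac{2}{u}}\,U(\tfrac{1}{2}u,0)$ obtained after factoring the common denominator $z^{2}-1$ in the combined leading-order sum vanish at $z=1$. The same Stirling expansion at higher order enforces the analogous cancellation at every order of $u^{-2}$. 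A cleaner justification is to note that the expansion \eqref{210f}, valid on a punctured annulus about $z=1$ via any of the three chosen representations, together with the entire character of $L_{U}$ in $\lambda$, puts us in the setting of \cref{thm:nopoles}; hence each $\hat{G}_{s}(z)$ has at worst a removable singularity at $z=1$ and is therefore analytic there, leaving $z=-1$ as the only possible pole.

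The main obstacle is the combined bookkeeping: one must confirm that the exponentially small Airy/Scorer remainders from the three different $(j,k)$ choices are indeed uniformly negligible on their closed subregions minus $\delta$-neighborhoods of the boundary, that the reorganization of the double series into a single expansion in $u^{-2}$ is consistent on the overlaps, and that the cancellation of the $z=1$ singularities in $\hat{G}_{s}(z)$ persists to all orders, for which invoking \cref{thm:nopoles} is decidedly more economical than a direct order-by-order Stirling calculation.
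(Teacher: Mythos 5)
Your proposal is correct and follows essentially the same route as the paper: combine the exact representation \eqref{68} with the large-$u$ expansions of $U_{R}^{(j,k)}(-\tfrac{1}{2}u,\sqrt{2u}\,z)$ on the three overlapping domains $Z^{(j,k)}$, reorganize via \eqref{209bb}, and invoke \cref{thm:nopoles} to remove the apparent singularity of $\hat{G}_{s}(z)$ at $z=1$ and extend validity to a neighborhood of that point. The only cosmetic difference is that the paper cites the ``simple'' expansions of \cite[Thm.~4]{Dunster:2020:ASI} (already purely algebraic, with explicit error bounds giving the uniformity for unbounded $z$) rather than starting from the full Airy/Scorer turning-point expansions and arguing the transcendental part away, which spares the subdominance bookkeeping you flag at the end.
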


\begin{proof}
It is evident from (\ref{73}) - (\ref{209bb}) that $\hat{G}_{s}(z)$ are rational functions, whose only possible poles are at $z=\pm 1$. Next, by applying \cite[Thm. 4]{Dunster:2020:ASI} we obtain the simple asymptotic expansions for large $u$
\begin{equation} \label{204}
U_{R}^{(j,k)}\left(-\tfrac{1}{2}u,\sqrt{2u} \, z\right)
\sim 
\frac{2(2u)^{\frac{1}{2}R}}{u}
\sum\limits_{s=0}^{\infty} \frac{G_{s,R}(z)}{u^{2s}} \quad (R=0,1),
\end{equation}
where $(j,k)=(0,1), (0,3)$ or $(1,3)$. These are valid for $z \in Z^{(j,k)}$, and uniformly for all points in this set whose distance from its boundary is greater than or equal to $\delta$. Note that $z= \pm 1$ are excluded from all three domains of validity, which is clear from (\ref{73}) and (\ref{74}).
Thus from (\ref{68}), (\ref{209bb}), and (\ref{204}) we see that (\ref{210f}) holds for all interior points of $Z_{0} \cup Z_{1} \cup Z_{3}$, except possibly in a closed neighborhood of $z=1$. However, from \cref{thm:nopoles} we see that the expansion is valid there too, and moreover the coefficients $\hat{G}_{s}(z)$ are analytic at $z=1$. Finally, the error bounds for (\ref{204}) supplied by \cite[Thm. 4]{Dunster:2020:ASI} establish that the expansion is uniformly valid for all (unbounded) $z$ that are at least a distance $\delta$ from the boundary of $Z_{0} \cup Z_{1} \cup Z_{3}$.
\end{proof}

It remains to provide an alternative expansion valid for a domain containing points excluded from \cref{thm5.3}. This is not quite as simple, since like \cref{thm4.5} it will involve the Scorer function.

We begin by defining, in place of (\ref{23}), (\ref{24}), (\ref{25}), and (\ref{26}), 
\begin{equation} \label{201}
\beta=\frac{z}{\sqrt{z^{2}-1}},
\end{equation}
where the branch of the square root is positive for $z>1$ and is continuous in the plane having a cut along $[-1,1]$. Thus $\beta \rightarrow 1$ as $z \rightarrow \infty$ in any direction. Further define
\begin{equation} \label{202}
\mathrm{E}_{1}(\beta)=\tfrac{1}{24}\beta
\left(5\beta^{2}-6\right),
\end{equation}
\begin{equation} \label{3.9}
\mathrm{E}_{2}(\beta)=
\tfrac{1}{16}\left(\beta^{2}-1\right)^{2} 
\left(5\beta^{2}-2\right),
\end{equation}
and for $s=2,3,4\ldots$
\begin{equation} \label{203}
\mathrm{E}_{s+1}(\beta) =
\frac{1}{2} \left(\beta^{2}-1 \right)^{2}\mathrm{E}_{s}^{\prime}(\beta)
+\frac{1}{2}\int_{\sigma(s)}^{\beta}
\left(p^{2}-1 \right)^{2}
\sum\limits_{j=1}^{s-1}
\mathrm{E}_{j}^{\prime}(p)
\mathrm{E}_{s-j}^{\prime}(p) dp,
\end{equation}
where again $\sigma(s)=1$ for $s$ odd and $\sigma(s)=0$ for $s$ even, so that the even and odd coefficients are even and odd functions of $\beta$, respectively, with $\mathrm{E}_{2s}(1)=0$ ($s=1,2,3,\ldots$).

Next, in place of (\ref{34}), (\ref{34.1}), (\ref{nu}) let
\begin{equation} \label{213}
\nu(u)=
- \sqrt{2}u\gamma_{0}(u)U'\left(\tfrac{1}{2}u,0\right)
+u^{3/2}\gamma_{1}(u)U\left(\tfrac{1}{2}u,0\right),
\end{equation}
where $\gamma_{R}(u)$ ($R=0,1$) are constants possessing the asymptotic expansions
\begin{equation} \label{203a}
\gamma_{R}(u) \sim
\frac{\sqrt {\pi} 
u^{\frac{1}{4}u-\frac{1}{2}R-\frac{13}{12}}}
{2^{\frac{1}{2}u-R+\frac{1}{2}}
\Gamma\left(\tfrac{1}{4}u-\tfrac{1}{2}R+\tfrac{3}{4}\right)}
\exp\left\{-\frac{1}{4}u
+\sum\limits_{s=0}^{\infty}\frac{\mathrm{E}_{2s+1}(1)}{u^{2s+1}}
\right\},
\end{equation}
as $u \rightarrow \infty$. Then let
\begin{equation} \label{211b}
\zeta =(3\xi/2)^{2/3},
\end{equation}
where $\xi$ is given by (\ref{xxi}) and the principal branch is taken. Also, similarly to (\ref{105}), coefficients $\tilde{G}_{s}(z)$ are given via the formal expansion
\begin{multline} \label{212}
- \frac{2}{u}U'\left(\tfrac{1}{2}u,0\right)
\sum\limits_{s=0}^{\infty} \frac{G_{s,0}(z)}{u^{2s}} 
+\sqrt{\frac{2}{u}}U\left(\tfrac{1}{2}u,0\right)
\sum\limits_{s=0}^{\infty} \frac{G_{s,1}(z)}{u^{2s}}
\\ 
-\frac{\sqrt{2} \nu(u)J(u,z)}{u^{2/3}\zeta }
\left(\frac{\zeta}{1-z^2}\right)^{1/4}
 \sim \frac{2^{1/4}}{u^{3/4}}
\left(\frac{2e}{u}\right)^{u/4}
\sum\limits_{s=0}^{\infty} 
\frac{\tilde{G}_{s}(z)}{u^{2s}},
\end{multline}
where $G_{s,R}(z)$ ($R=0,1$) are given by (\ref{73}) and (\ref{74}), and $J(u,z)$ has the expansion given by (\ref{36}) along with (\ref{27}), (\ref{28}), (\ref{29}), (\ref{201}) - (\ref{203}). These coefficients, like those of (\ref{105}), are analytic at $z=1$. The desired expansion is then given as follows.
\begin{theorem}
Let $\xi$ and $\zeta$ be given by (\ref{xxi}) and (\ref{211b}), and $\mathcal{A}(u,z)$ and $\mathcal{B}(u,z)$ have the expansions given by (\ref{27}) - (\ref{32}), where $\mathrm{E}_{s}(\beta)$ are defined by (\ref{201}) - (\ref{203}). Then as $u \rightarrow \infty$
\begin{multline} \label{211}
L_{U}\left(\tfrac{1}{2}u,-\sqrt{\tfrac{1}{2}u}\,z\right)  \sim 
\frac{2^{1/4}}{u^{3/4}}
\left(\frac{2e}{u}\right)^{u/4}
\Bigg[
\sum\limits_{s=0}^{\infty} 
\frac{\tilde{G}_{s}(z)}{u^{2s}}
\\ 
+\sqrt{2} \pi u^{2/3}\exp\left\{-\sum\limits_{s=0}^{\infty}
\frac{\mathrm{E}_{2s+1}(1)}{u^{2s+1}} \right\}
\left\{\mathrm{Hi}\left(u^{2/3}\zeta  \right)\mathcal{A}(u,z)  +  \mathrm{Hi}^{\prime}\left(u^{2/3}\zeta \right)\mathcal{B}(u,z)
\right\} \Bigg],
\end{multline}
for $z$ in the interior of $Z_{0} \cup Z_{1} \cup Z_{3}$, uniformly for all points in this set whose distance from its boundary is no less than $\delta$.
\end{theorem}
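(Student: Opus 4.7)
The plan is to mirror the proof of Theorem \ref{thm4.5} (which gave the Scorer-function expansion for $L_W^-$), with (\ref{63}) replaced by (\ref{68}) and Theorem \ref{thm3} replaced by its $U$-equation analog. First I would apply (\ref{68}) with an appropriate choice of $(j,k)$, say $(1,3)$, to write $L_U(\tfrac{1}{2}u,-\sqrt{\tfrac{1}{2}u}\,z)$ as a linear combination of $U_R^{(1,3)}(-\tfrac{1}{2}u,-\sqrt{2u}\,z)$, $R=0,1$, with coefficients $-U'(\tfrac{1}{2}u,0)$ and $-\tfrac{1}{2}U(\tfrac{1}{2}u,0)$. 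Into this I would substitute the Scorer-function uniform asymptotic expansion for $U_R^{(1,3)}(-\tfrac{1}{2}u,-\sqrt{2u}\,z)$ that is the natural $U$-equation companion of Theorem \ref{thm3}: it has exactly the structure (\ref{34b}) but with $\zeta$ defined by (\ref{xxi})--(\ref{211b}), the polynomials $\mathrm{E}_s(\beta)$ from (\ref{201})--(\ref{203}), and constants $\gamma_R(u)$ possessing the expansion (\ref{203a}). If, in parts of $Z_0\cup Z_1\cup Z_3$, the argument $-z$ comes close to the cut of that underlying expansion, I would use a second choice of $(j,k)$ and invoke an appropriate connection formula, in parallel with the way $(0,2)$ versus $(1,3)$ were combined in the proofs of Theorems \ref{thm4.4} and \ref{thm4.6}.

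Substituting and grouping separates the result into two contributions. The first is the slowly varying $\mathcal{G}_R$-piece combined with the $J(u,z)$ contribution implicit in the inner exponential-sum expansions of $\mathcal{A}$ and $\mathcal{B}$; combined with the closed-form values (\ref{66}) and (\ref{67}) for $U(\tfrac{1}{2}u,0)$ and $U'(\tfrac{1}{2}u,0)$ and processed through Stirling's formula (\ref{stirling}), this piece organises itself exactly as the defining relation (\ref{212}), producing the series $\tfrac{2^{1/4}}{u^{3/4}}(2e/u)^{u/4}\sum_{s}\tilde{G}_s(z)/u^{2s}$. The second is the Scorer-function piece, whose coefficient is $\sqrt{2}\,u\,\gamma_R(u)$ multiplied by the $U, U'$ prefactors; upon collecting the gamma functions from (\ref{66}), (\ref{67}), and (\ref{203a}) via Stirling's formula, these reduce precisely to $\sqrt{2}\pi u^{2/3}\exp\{-\sum_{s=0}^{\infty}\mathrm{E}_{2s+1}(1)/u^{2s+1}\}$ multiplying the same overall prefactor. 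This bookkeeping of exponential and polynomial factors---making the $\Gamma(\tfrac14 u \pm \ldots)$ factors in $\gamma_R(u)$ cancel against those in $U, U'$ to yield the $(2e/u)^{u/4}$ outer factor, and identifying the residual $\exp\{-\sum_s\mathrm{E}_{2s+1}(1)/u^{2s+1}\}$ as the Scorer prefactor---is the main technical obstacle.

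The coefficients $\tilde{G}_s(z)$ are a priori analytic on the complement of the cut except possibly at the turning point $z=1$; however $L_U$ is entire in its second argument, and the Scorer-function term is analytic at $z=1$ because $\zeta$, $\mathcal{A}(u,z)$ and $\mathcal{B}(u,z)$ all are. Theorem \ref{thm:nopoles}, applied just as in the closing paragraph of the proof of Theorem \ref{thm5.3}, then forces $\tilde{G}_s(z)$ to have at worst removable singularities at $z=1$ and ensures that (\ref{211}) is valid uniformly on a disc about that point. Uniformity on the rest of $Z_0\cup Z_1\cup Z_3$ (away from the cut $(-\infty,-1]$, which forms the relevant boundary component coming from $Z_2$) then inherits directly from the corresponding uniform bounds for the $U$-equation Scorer expansion.
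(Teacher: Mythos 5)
Your proposal follows essentially the same route as the paper: the representation (\ref{68}) with $(j,k)=(1,3)$, the $U$-equation analogue of \cref{thm3} with the data (\ref{73})--(\ref{74}), (\ref{201})--(\ref{203}), (\ref{203a}), and Stirling bookkeeping that collapses the Scorer coefficient to $\sqrt{2}\pi u^{2/3}\exp\{-\sum_{s}\mathrm{E}_{2s+1}(1)/u^{2s+1}\}$ times the outer $(2e/u)^{u/4}$ factor. The only streamlining you miss is that the sign flip in the argument is disposed of at once by the parity identity $U_{R}^{(1,3)}(-a,-2\lambda)=(-1)^{R}U_{R}^{(1,3)}(-a,2\lambda)$ (both sides being the unique subdominant particular solution at $\lambda=\pm i\infty$), so no second choice of $(j,k)$ or connection formula is required.
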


\begin{proof}
Firstly we note that $U_{R}^{(1,3)}(-a,-2\lambda)=(-1)^{R}U_{R}^{(1,3)}(-a,2\lambda)$ since both are particular solutions of the same differential equation with the same unique subdominant behavior at $\lambda = \pm i \infty$. Thus from (\ref{68}) with $(j,k)=(1,3)$ we have
\begin{multline} \label{210}
L_{U}(a,-\lambda)  =-U'(a,0) U_{0}^{(1,3)}(-a,-2\lambda)
-\tfrac{1}{2}U(a,0) U_{1}^{(1,3)}(-a,-2\lambda) \\
=-U'(a,0) U_{0}^{(1,3)}(-a,2\lambda)
+\tfrac{1}{2}U(a,0) U_{1}^{(1,3)}(-a,2\lambda).
\end{multline}

Now from \cite[Sect. 3.1]{Dunster:2021:UAP} we find for $R=0$ and $R=1$, and $z$ lying in the stated domain, that the functions $U_{R}^{(1,3)}(-\frac{1}{2}u,\sqrt{2u} \, z)$ possess the same expansions as $W_{R}^{(0,3)}(\frac{1}{2}u,\sqrt{2u} \, z)$ given by  \cref{thm3}, but with coefficients $G_{s,R}(z)$ and $\mathrm{E}_{s}(\beta)$ given by (\ref{73}), (\ref{74}), (\ref{201}) - (\ref{203}) and $\gamma_{R}(u)$ given by (\ref{203a}). We then insert these into (\ref{210}) (with $a=\frac{1}{2}u$ and $2\lambda =\sqrt{2u} \, z$) and use (\ref{212}), and this yields in the stated domain
\begin{multline} \label{211a}
L_{U}\left(\tfrac{1}{2}u,-\sqrt{\tfrac{1}{2}u}\,z\right)  \sim 
\frac{2^{1/4}}{u^{3/4}}
\left(\frac{2e}{u}\right)^{u/4}
\sum\limits_{s=0}^{\infty} 
\frac{\hat{G}_{s}(z)}{u^{2s}}
\\ 
+\sqrt{2} \pi \nu(u)\left\{\mathrm{Hi}\left(u^{2/3}\zeta  \right)\mathcal{A}(u,z)  +  \mathrm{Hi}^{\prime}\left(u^{2/3}\zeta \right)\mathcal{B}(u,z)  \right\}.
\end{multline}

Next from (\ref{stirling}), (\ref{66}), (\ref{67}), (\ref{213}), and (\ref{203a}) we find that for large $u$
\begin{equation} \label{214}
\nu(u)\sim
\frac{\sqrt {\pi}2^{3/4}}
{u^{1/12}
\Gamma\left(\frac{1}{2}u+\frac{1}{2} \right)}
\left(\frac{u}{2e}\right)^{u/4}
\exp\left\{\sum\limits_{s=0}^{\infty}
\frac{\mathrm{E}_{2s+1}(1)}{u^{2s+1}} \right\}.
\end{equation}
Now from \cite[Eqs. (3.20) and (3.24)]{Dunster:2021:UAP} we have for large $u$
\begin{equation} \label{216}
\frac{\sqrt {2\pi}}{
\Gamma\left(\frac{1}{2}u+\frac{1}{2} \right)}
\left(\frac{u}{2e}\right)^{u/2}
\sim 
\exp\left\{-2\sum\limits_{s=0}^{\infty}
\frac{\mathrm{E}_{2s+1}(1)}{u^{2s+1}} \right\},
\end{equation}
and thus from (\ref{214})
\begin{equation} \label{217}
\nu(u)\sim
\frac{2^{1/4}}{u^{1/12}}
\left(\frac{2e}{u}\right)^{u/4}
\exp\left\{-\sum\limits_{s=0}^{\infty}
\frac{\mathrm{E}_{2s+1}(1)}{u^{2s+1}} \right\}.
\end{equation}
Plugging this into (\ref{211a}) then yields (\ref{211}).
\end{proof}

\section*{Acknowledgments}
I thank the two anonymous referees for their detailed reports which helped improve the manuscript. Financial support from Ministerio de Ciencia e Innovaci\'on, Spain, project PGC2018-098279-B-I00 (MCIU/AEI/FEDER, UE) is acknowledged. 

\bibliographystyle{siamplain}
\bibliography{biblio}

\end{document}